\pgfplotsset{width=7cm,compat=1.8}
\newtheorem{theorem}{Theorem}[section]
\newtheorem{lemma}[theorem]{Lemma}
\newtheorem{assumption}[theorem]{Assumption}
\newtheorem{corollary}[theorem]{Corollary}
\theoremstyle{definition}
\newtheorem{definition}[theorem]{Definition}
\theoremstyle{definition}
\newtheorem{remark}[theorem]{Remark}
\theoremstyle{definition}
\global\let\AddToReset=\@addtoreset}
\newcommand{\veps}{{\varepsilon }}
\newcommand{\vut}{\widehat{\vec u}_h^t}
\renewcommand{\ut}{\widehat{u}_h^t}
\newcommand{\vuts}{\widehat{\vec u}^{ts}}
\newcommand{\uts}{\widehat{u}^{ts}}
\newcommand{\bg}{\vec{\mathfrak f}_h}
\newcommand{\bD}{\vec{\mathfrak A}_h}
\newcommand{\del}{\partial}
\renewcommand{\vec}[1]{\geovec{#1}}
\newcommand{\hatu}{\widehat{\vec u}}
\newcommand{\strec}{\widehat{\vec u}^{ts}}
\newcommand{\trec}{\widehat {\vec u}^t}
\renewcommand{\d}{\ensuremath{\,\mathrm{d}}}
\author{Andreas Dedner}
\address{Andreas Dedner \newline 
Mathematics Institute\newline
Zeeman Building\newline
University of Warwick\newline
Coventry CV4 7AL\newline
UK}
\curraddr{}
\email{a.s.dedner@warwick.ac.uk}
\author{Jan Giesselmann} 
\address{Jan Giesselmann\newline
Technical University of Darmstadt\newline
Department of Mathematics\newline
Dolivostr 15\newline
D-64293 Darmstadt\newline
Germany} 
\curraddr{}
\email{giesselmann@mathematik.tu-darmstadt.de}
\thanks{The research of J.G. and K.K. was supported by Deutsche Forschungsgemeinschaft (DFG, German Research Foundation) - SPP 2410 Hyperbolic Balance Laws in Fluid Mechanics: Complexity, Scales, Randomness (CoScaRa), within the Project ``A posteriori error estimators for statistical solutions of barotropic Navier-Stokes equations'' 525877563. The work of J.G. is also supported by the Graduate School CE within Computational Engineering at Technische Universität Darmstadt.}
\author{Kiwoong Kwon}
\address{Kiwoong Kwon\newline
Technical University of Darmstadt\newline
Department of Mathematics\newline
Dolivostr 15\newline
D-64293 Darmstadt\newline
Germany}
\email{kwon@mathematik.tu-darmstadt.de}
\author{Tristan Pryer}
\address{
Tristan Pryer,\newline
Department of Mathematics, University of Bath, Bath BA2 7AY, UK
}
\email{tmp38@bath.ac.uk}
\thanks{
TP gratefully acknowledges the support of the EPSRC grant EP/P000835/1.
}
\subjclass{65M15, 65M60, 35L65, 35K65}
\keywords{a posteriori error estimate, discontinuous Galerkin, wave equation, conservation law, advection-diffusion, advection dominated}
\title[A posteriori analysis for convection-diffusion systems]
{A posteriori analysis for nonlinear convection-diffusion systems}
\date{\today}
\begin{document}
\maketitle

\begin{abstract}
  This work provides reliable a posteriori error estimates for
  Runge-Kutta discontinuous Galerkin approximations of nonlinear
  convection-diffusion systems. The classes of systems we study are
  quite general with a focus on convection-dominated and degenerate
  parabolic problems. Our a posteriori error bounds are valid for a family
  of discontinuous Galerkin spatial discretizations and various
  temporal discretizations that include explicit and implicit-explicit
  time-stepping schemes, popular tools for practical simulations of
  this class of problem.
  We prove that our estimators provide reliable upper bounds for
  the error of the numerical method and present numerical evidence
  showing that they achieve the same order of convergence as the error. 
  Since one of our main interests is the convection dominant case, we also track the dependence of the estimator on the viscosity coefficient.
\end{abstract}

\section{Introduction}\label{sec:intro}
The goal of this work is to provide reliable a posteriori error
estimates for numerical approximations of nonlinear
convection-diffusion systems. We discretize these systems using the method of lines
approach, employing discontinuous Galerkin (dG) methods for spatial discretization. A particularly popular sub-class is
the Runge-Kutta discontinuous Galerkin (RKdG) scheme
\cite{CockburnShu:1998}. 

Our primary focus is convection-dominated flows, particularly those
that remain stable in the vanishing viscosity limit. We prove that our
estimators provide reliable upper bounds for the error of the
numerical method and present numerical evidence showing that they
achieve the same order of convergence as the error, i.e., they are
efficient \cite{Ainsworth:2000}.

Extensive literature exists on a posteriori error estimates for continuous and discontinuous Galerkin
methods for convection-diffusion equations, with a focus on scalar problems and linear
advection. A non-exhaustive list includes \cite{Verfurth:2013, Ver05,
Sangalli:2008, Kunert:2003, DEV13, TV15, CGM14,
GeorgoulisLakkisMakridakisVirtanen:2016, 2025Dong}.

A crucial component in a posteriori analysis is the underlying stability framework of the partial differential equation (PDE) or PDE system being studied. Traditional analysis of convection-diffusion equations relies on the parabolic component through an \emph{energy} framework. However, this approach fails to yield robust a posteriori estimators for nonlinear systems in the convection-dominated regime.

Instead, we consider the nonlinear convection-diffusion problem from a \emph{hyperbolic} viewpoint.
This perspective is particularly well-suited for convection-dominated flows
and nonlinear advection systems, which are our primary focus.
The stability analysis presented here is based on the relative entropy technique for systems of nonlinear advection equations.
This approach, introduced by Dafermos \cite{Daf79} and DiPerna \cite{Dip79} for nonlinear hyperbolic conservation laws,
has subsequently been applied in the context of dissipative and non-dissipative systems \cite{FN07, GLT17, Tza05}.

The generality of our approach allows us to treat specific cases and, in particular, to examine how our results specialize in the linear and scalar cases, allowing comparisons with the existing literature. 
One such example is the linear wave equation, which can be written as a system of two linear transport equations. 
In this case, the relative entropy framework simplifies considerably, yielding long-time stable a posteriori bounds comparable to those in the existing literature \cite{GeorgoulisLakkisMakridakis:2013,GeorgoulisLakkisMakridakisVirtanen:2016, 2025Dong}.

A special feature of our results is that in the vanishing viscosity limit they converge to efficient error estimators
for approximations of systems of hyperbolic conservation laws, which were
derived in \cite{GMP_15, DG16}.
Our analysis also accommodates quite general time integration methods, including explicit and implicit-explicit (IMEX) methods that are popular in practical simulations for this class of problems. 
For clarity, we focus on Runge-Kutta methods up to third order, though the methodology extends naturally to higher-order schemes.

In this work we study one-dimensional hyperbolic-parabolic systems of the form
\begin{equation}\label{eq:cd-intro}
  \partial_t \vec u + \partial_x \vec f(\vec u)
  =
  \veps \partial_x (\vec A (\vec u) \partial_x \vec u), \quad (t,x) \in [0,T] \times \W,
\end{equation}
where $\W \subset \reals$ is a bounded domain and $T > 0$. Here, $\vec u(t,x) \in \reals^m$ denotes the vector of conserved variables, $\vec f(\vec u)$ represents the nonlinear convective flux, and $\vec A(\vec u)$ is the diffusion matrix, which may depend nonlinearly on $\vec u$. The parameter $\veps \geq 0$ is the viscosity coefficient. We focus particularly on the convection-dominated regime where $\veps \ll 1$.

The relative entropy framework requires that this system is endowed
with an \emph{entropy/entropy-flux} pair (see \S \ref{sec:gref} for a
precise definition). This induces an additional balance law that is
satisfied by classical solutions to \eqref{eq:cd-intro}. 
It should be noted that scalar equations and thermomechanical systems possess such
entropy/entropy-flux pairs, though not all problems of this form do. 
In particular, for thermomechanical theories, the additional balance law arises from the second law of thermodynamics \cite{Daf10}.

Our analysis requires the entropy to be strictly convex, which holds for most but not all thermomechanical models. Notable exceptions include multiphase flows and nonlinear elastodynamics in multiple space dimensions.

In addition to the stability framework, another useful technique in a
posteriori analysis is to make use of an appropriate reconstruction of
the numerical solution. This may take the form of a smooth object
from an elliptic reconstruction approach
\cite{Lakkis:2006}, 
or a discrete reconstruction that has the same or better approximability as the numerical solution itself.
The underlying idea of obtaining a posteriori error estimates for dG schemes by using a reconstruction of the numerical solution
and an appropriate PDE stability goes back to \cite{Makridakis:2003} and has been extensively used since then.

Our error estimate strategy is based
on interpreting a {\it reconstruction} $\hatu$ of the numerical
solution as the solution of a perturbed equation, i.e.,
\begin{equation}\label{eq:cdr-intro}
  \partial_t \hatu + \partial_x \vec f(\hatu)
  =
  \veps \partial_x (\vec A(\hatu) \partial_x \hatu) + \vec r.
\end{equation}
In fact, the reconstruction $\hatu$ we propose is a fully computable space-time reconstruction, and thus equation \eqref{eq:cdr-intro} can be understood as a definition of the residual $\vec r$. We then apply
the relative entropy framework to bound the difference between $\vec u$ and $\hatu$ in terms of $\vec r$.

We note that there are other methodologies to construct a posteriori indicators, not only for the class of problems we consider but also for more general systems such as compressible fluid flows \cite{HH02,DRV15,GiesselmannPryer:2017}. These alternative approaches are typically based on duality arguments.

A key challenge with nonlinear convection-diffusion systems arises when diffusion acts only on certain variables, making solution regularity delicate and rendering standard parabolic theory inapplicable. 
In both this setting and the purely hyperbolic case, evolution problems with generic initial data only admit classical solutions for short times, while weak solutions are not unique.
In that case, the additional conservation law introduced by the entropy gives rise to {\it entropy inequalities} which constitute a 
selection criterion for weeding out non-physical weak solutions.
However, it was shown, starting with \cite{DS10}, that in many physically relevant systems of hyperbolic conservation laws, entropy solutions
(weak solutions satisfying the entropy inequality) are not unique in general. 
This reflects structural limitations of the relative entropy framework. It provides only \emph{weak-strong} stability results, meaning discontinuous entropy solutions can be compared to Lipschitz solutions but not to other discontinuous entropy solutions. Therefore, in the nonlinear examples, the error estimators we present can only be expected to be convergent (for $h \rightarrow 0$) 
as long as the exact solution
remains Lipschitz continuous.
An important distinction arises between linear and nonlinear cases. In the nonlinear case, error estimators depend exponentially on the Lipschitz constant of the reconstruction, while no such dependence exists in the linear case. Consequently, the linear case requires no continuity constraints on the solution.

The rest of this paper is as follows: In \S \ref{sec:nsr} we describe
the numerical schemes under consideration as well as the mechanism we
use to construct an appropriate postprocessing of the numerical scheme
to allow for the relative entropy arguments to be used. In \S
\ref{sec:gref} we outline a general relative entropy framework for
convection-diffusion problems. This allows us to highlight when we can
and when we cannot apply this framework to specific problems. We
follow from this by restricting to certain cases and deriving a
posteriori bounds for linear scalar convection-diffusion problems in
\S \ref{sec:ls}, nonlinear scalar convection-diffusion problems in \S
\ref{sec:nls}, linear systems of convection-diffusion problems in \S
\ref{sec:yl}. In \S \ref{sec:lw} we focus on how to deal with linear
problems where the diffusion tensor does not have full rank, when the
equation is degenerate parabolic, and in \S \ref{sec:nw} we examine the
nonlinear case. Finally, in \S \ref{sec:num} we summarize extensive
numerical experiments that demonstrate the robustness and efficiency of our estimators
in various cases of interest.

\section{Numerical schemes and reconstructions}
\label{sec:nsr}

In this section, we introduce the class of numerical approximations
considered in this work, and we describe a methodology for
constructing space-time reconstructions of the resulting numerical solutions. 
Specifically, we study fully-discrete Runge-Kutta discontinuous Galerkin
(RKdG) schemes approximating \eqref{eq:cd-intro} based on a method of
lines approach, which we describe in detail below.

\subsection{RKdG schemes}
Without loss of generality, we consider problems on the unit interval $\W=(0,1)\subset \reals$. The spatial domain is partitioned by choosing grid points $0= x_0 < x_1 < \dots < x_{M-1} < x_M=1$. 
For simplicity of presentation, the numerical scheme described here assumes periodic boundary conditions, which identify the endpoints $x_0$ and $x_M$. In this
case, we denote the domain by $\mathbb{T}^1$.
The analysis can be extended to Dirichlet boundary conditions in certain cases; see Remark \ref{rem:bcsr}.

We denote the spatial mesh sizes $h_{k+\frac{1}{2}}:= x_{k+1}-x_k$,
$h_k := \tfrac{1}{2}(h_{k+\frac{1}{2}} + h_{k-\frac{1}{2}} )$, and the
maximum 
and minimum spatial 
mesh sizes $ h:= \max_k h_{k+\frac{1}{2}}$,  $ h_{\min} := \min_k h_{k+\frac{1}{2}}$.
We assume that the mesh regularity condition
\begin{equation}\label{eq:mesh-regularity}
\frac{h}{h_{\min}} \leq C_{\text{reg}}
\end{equation}
holds uniformly as $h\to 0$ for some constant $C_{\text{reg}} > 0$.

Let $\mathbb{P}_q(I,\reals^m)$ denote the space of vector-valued
polynomials of total degree $q \in \mathbb{N}$ over an interval $I$.
We introduce the piecewise polynomial dG ansatz and test spaces as
\begin{equation}\label{def:dgs}
  \fes_q^s := \{ {\vec w} : [0,1]\rightarrow \reals^m \, :
  \,{\vec w}|_{(x_{i-1},x_{i})} \in
  \mathbb{P}_q((x_{i-1},x_{i}),\reals^m) \text{ for } 1 \leq i
  \leq M\}.
\end{equation}
The superscript $s$ indicates that these functions depend only on space.

We introduce discrete operators $\bg: \fes_q^s \rightarrow \fes_q^s$ and
$\bD: \fes_q^s \rightarrow \fes_q^s$ that approximate the first- and second-order differential operators in \eqref{eq:cd-intro}:
\begin{equation}\nonumber
  \bg(\cdot) \approx \partial_x \vec f(\cdot)
  \text{ and }
  \bD (\cdot) \approx \partial_x \qp{ \vec A(\cdot) \partial_x (\cdot)}.
\end{equation}
A generic semi-discrete dG scheme for \eqref{eq:cd-intro} is defined as follows:
for almost every $t\in (0, T]$, find $\vec u_h(t) \in \fes^s_q$ such that
\begin{align}
  \label{eq:sdisc}
    \partial_t {\vec u}_h
    +
    \bg ({\vec u}_h)
    &=
    \veps
    \bD (\vec u_h)
    \\
    \vec u_h(0) &= \mathbb{P} \vec u(0), \nonumber
\end{align}
where $\mathbb{P}:\leb{2}(0,1; \reals^m) \to \fes^s_q$ denotes the $\leb{2}$-projection operator defined by
\[
\int_{\mathbb{T}^1} (\mathbb{P}\vec v - \vec v) \cdot \vec \psi_h \, dx = 0 \quad \forall \vec \psi_h \in \fes^s_q.
\] 

The map $\bg: \fes_q^s \rightarrow
\fes_q^s$ is defined by requiring that for all $\vec\phi_h,
\vec \psi_h \in \fes_q^s$,
\begin{equation}\nonumber
  \label{dgscheme}
  \int_{\mathbb{T}^1} \bg({\vec \phi_h}) \cdot \vec \psi_h \d x
  =
  -
  \int_{\mathbb{T}^1}{\vec f}({\vec \phi_h}) \cdot \partial_x {\vec \psi_h} \d x 
  +
  \sum_{i=0}^{M-1} {\vec F}({\vec \phi_h}(x_i^-),{\vec \phi_h}(x_i^+)) \cdot \jump{\vec \psi_h}_i,
\end{equation}
where $\vec F:\reals^m \times \reals^m \rightarrow \reals^m$ is
a numerical flux function. The jump and average operators at $x_i$ are defined by 
\begin{align}\nonumber
  \jump{\vec \psi_h}_i &:=
  {\vec \psi_h}(x_i^-) 
  -
  {\vec \psi_h}(x_i^+):= \lim_{s \searrow 0}{\vec \psi_h}(x_i-s) - \lim_{s \searrow 0}{\vec \psi_h}(x_i+s),\\
  \avg{\vec \psi_h}_i &:=
  \frac{1}{2}\qp{{\vec \psi_h}(x_i^-) +  {\vec \psi_h}(x_i^+)}.\nonumber
\end{align}

Our analysis in the following sections makes no specific assumptions on the structure of
$\bD$. One may consider, for instance, a consistent interior penalty formulation defined by
\begin{align}\nonumber
    -\int_{\mathbb{T}^1} \bD(\vec \phi_h) \cdot \vec \psi_h \d x
    &= \int_{\mathbb{T}^1} \qp{\vec A(\vec \phi_h) \partial_x \vec \phi_h} \cdot \partial_x \vec \psi_h\\
    &\quad - \sum_{i=0}^{M-1} \Big( 
    \jump{\vec \psi_h}_i \cdot \avg{\vec A(\vec \phi_h) \partial_x \vec \phi_h}_i
    -
    \frac \sigma h_i \jump{\vec \phi_h}_i \cdot \jump{\vec \psi_h}_i\Big),\nonumber
\end{align}
where $\sigma = \sigma(\vec A)$ is an appropriately defined \emph{discontinuity
penalisation parameter} \cite{Verfurth:2013} with $\sigma = 0$ in directions where $\vec A$
degenerates.
For the analysis, we require only that the
Lipschitz constant of the spatially discretised ODE system
\eqref{eq:sdisc} scales as $\Oh(h^{-1} + \veps
h^{-2})$.
This property holds for standard discretisations, including
the dG method presented above, as well as many other methods
\cite{ArnoldBrezziCockburnMarini:2001, Verfurth:2013}.

The error analysis for the semi-discrete scheme \eqref{eq:sdisc} requires 
appropriate norms that account for the discontinuous nature of the 
dG approximation space. Thus, we introduce the following mesh-dependent dG energy norm:

\begin{definition}[dG energy norm]\label{def:mesh-norms}
  Let $I_j = (x_j, x_{j+1})$ denote the $j$-th mesh interval. We define the following mesh dependent $\leb{2}(\sobh1)$
  Bochner-like norm as
  \begin{equation}\nonumber
      \enorm{w}_{\leb{2}(0,t; \fes_q)}^2
      :=
      \int_0^{t} \sum_{j=0}^{M-1} \qp{\Norm{\partial_x w(s)}_{\leb{2}(I_j)}^2 + h_j^{-1} |\jump{w(s)}_j|^2 } \d s,
  \end{equation}
  where $t \in [0,T]$.
  We refer to $\enorm{\cdot}_{\leb{2}(0,t; \fes_q)}$ as the \emph{dG energy norm}.
\end{definition}

We consider Runge-Kutta (RK) temporal discretizations. For convection-dominated problems,
either explicit RK methods or implicit-explicit (IMEX) RK methods are suitable. 
We partition the finite time interval $[0,T]$ into $N$ consecutive subintervals
with endpoints $0 = t_0 < t_1 < \dots < t_N = T$. The $n$-th timestep is 
$\tau_n = t_n - t_{n-1}$, and the maximal timestep is $\tau:= \max_n \tau_n$.

The lowest order IMEX scheme that falls into our framework is the forward/backward Euler discretization, 
also known as ARS(1,1,1). This can be given in IMEX Butcher Tableau form as:
\begin{equation}\nonumber
  \begin{array}{c|cc}
    0   & 0   & 0  \\
    1 & 1 & 0  \\
    \hline
    & 1   & 0  \\
  \end{array}
  \qquad
  \begin{array}{c|cc}
    0   & 0   & 0  \\
    1 & 0 & 1  \\
    \hline
    & 0   & 1  \\
  \end{array}
  .
\end{equation}
If we apply this IMEX scheme to \eqref{eq:sdisc} with initial datum 
$\vec u_h^0 := \vec u_h(0)$, we need to find $\vec u_h^n$ for $n\in \{1,2,\ldots,N\}$ such that
\begin{equation}\nonumber
  \frac{\vec u_h^n - \vec u_h^{n-1}}{\tau_n} = -\bg ({\vec u}_h^{n-1}) + \veps \bD (\vec u_h^n).
\end{equation}
Note that our analysis is not restricted to a specific time-stepping scheme; rather, it
accommodates quite general RK methods. Indeed, any RK or
multi-step method applied to \eqref{eq:sdisc}
yields a sequence of approximate solutions ${\vec u}_h^0, {\vec u}_h^1,\dots, {\vec u}_h^N \in \fes_q^s$ at temporal points
$\{t_n\}_{n=0}^N$.

\subsection{Reconstruction techniques}

The discrete solutions obtained from the RKdG scheme are piecewise 
polynomial in space (possibly discontinuous at element interfaces) and 
exist only at discrete time points. To enable the stability analysis 
that will be developed in \S \ref{sec:gref}, we construct space-time reconstructions. 

Our reconstruction proceeds in two steps: first temporal, and then
spatial. The reason for this ordering is that the temporal component of the PDE
system \eqref{eq:cd-intro} is linear and the spatial component, in
general, is nonlinear.

Given the discrete approximations ${\vec u}_h^0$, ${\vec u}_h^1$, $\dots$, ${\vec u}_h^N$ $\in \fes_q^s$, we first construct a temporal reconstruction 
$\widehat{\vec u}^t_h$, which is Lipschitz continuous and piecewise 
polynomial in time, while remaining piecewise polynomial (but discontinuous) in space. 
Subsequently, we use $\widehat{\vec u}^t_h$ to construct a space-time reconstruction 
$\widehat{\vec u}^{ts}$ that is Lipschitz continuous in both space and time. 
Figure \ref{fig:reconstruction} illustrates this two-step procedure.

\begin{figure}[h!]
  \begin{tikzpicture}[scale=0.75]
    \pgfplotstableread{
    x plot1   plot2   plot3
    0         0         0         0
    0.1111    0.3420    0.1248    0.1020
    0.2222    0.6428    0.2822    0.2122
    0.3333    0.8660    0.5011    0.3094
    0.4444    0.9848    0.7550    0.4029
    0.5556    0.9848    0.9656    0.5766
    0.6667    0.8660    1.0343    0.8716
    0.7778    0.6428    0.8885    1.0856
    0.8889    0.3420    0.5205    0.8406
    1.0000    0.0000         0         0
    }\dummydata
    \begin{axis}[
      domain=0:1,
      samples y=0, 
      zmin=0,
      xticklabels={,,},
      yticklabels={,,},
      zticklabels={,,},
      xlabel={$x$},
      ylabel={$t$},
      zlabel={$\vec u_h^n$},
      area plot/.style={
      fill opacity=0.25,
      draw=blue!80!black,thick,
      fill=blue,
      mark=none,
      }
      ]
      \pgfplotsinvokeforeach{1,...,3}{
      \addplot3 [area plot] table [x=x, y expr=#1, z=plot#1]
      {\dummydata};
      }
    \end{axis}
  \end{tikzpicture}
  \begin{tikzpicture}[scale=0.75]
    \begin{axis}[
      domain=0:1,
      samples y=0, 
      zmin=0,
      xticklabels={,,},
      yticklabels={,,},
      zticklabels={,,},
      xlabel={$x$},
      ylabel={$t$},
      zlabel={$\vec{\widehat u}^t_h$},
      area plot/.style={
      draw=blue!80!black,thick
      }
      ]
      \addplot3[surf,  fill opacity=0.25,
      shader=faceted,
      point meta=explicit] file {plot2.dat};
    \end{axis}
  \end{tikzpicture}
  \begin{tikzpicture}[scale=0.75]
    \begin{axis}[
      domain=0:1,
      samples y=0, 
      zmin=0,
      xticklabels={,,},
      yticklabels={,,},
      zticklabels={,,},
      xlabel={$x$},
      ylabel={$t$},
      zlabel={$\vec{\widehat u}^{ts}$},
      area plot/.style={
      draw=black,thick
      }
      ]
      \addplot3[surf, fill opacity=0.25,
      shader=faceted,
      point meta=explicit] file {plot3.dat};
    \end{axis}
  \end{tikzpicture}
  \caption{Two-stage reconstruction methodology. (Left) Discrete solution $\vec{u}^n_h$ at time nodes. (Center) Temporal reconstruction $\widehat{\vec u}^t_h$. (Right) Space-time reconstruction $\widehat{\vec u}^{ts}$.}
  \label{fig:reconstruction}
\end{figure}

For an arbitrary vector space $V$, we define the space of piecewise
polynomials of degree $r$ in time as
\begin{equation}\label{tspace}
  \fes_{r}^t(0,T; V)
  :=
  \{ \vec w : [0,T] \rightarrow V \, :
  \,\vec w|_{ (t_n,t_{n+1})} \in \mathbb{P}_{r}((t_n,t_{n+1}),V)\}.
\end{equation}
The superscript $t$ denotes temporal dependence.
The temporal reconstruction $\vut$ is $C^0$ (or $C^1$) continuous in time and piecewise polynomial in space,
with polynomial degree consistent with the order of accuracy of the time discretization method.
In this work, we focus on third-order schemes for clarity of exposition.
For arbitrary polynomial degrees, we refer the reader to the detailed description in \cite{DG16}.
The temporal reconstruction is defined as follows: 

\begin{definition}[Temporal reconstruction for third order schemes]\label{def:grec}
  The temporal reconstruction $\vut$ is the unique element of $\fes_{3}^{t}(0,T; \fes_q^s)$ determined by
  \begin{align}\label{grec}
      \vut|_{[t_n,t_{n+1}]}(t_j)
      &=
      \vec u_h^j, \quad \text{for } j=n, n+1,\nonumber\\
      \partial_t \vut|_{[t_n,t_{n+1}]}(t_j)
      &=
      -\bg({\vec u}_h^j)
      +
      \veps\bD({\vec u}_h^j), \quad \text{for }  j=n, n+1.
  \end{align}
\end{definition}

\begin{lemma}[Properties of reconstruction in time \cite{DG16}]
  \label{lem:time-reconst}
  The reconstruction $\vut$ as given in Definition
  \ref{def:grec} is well-defined, computable, and belongs to the space $\operatorname{W}^{1,\infty}(0,T; \fes_q^s)$.
\end{lemma}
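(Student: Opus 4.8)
The plan is to verify the three claims — well-definedness, computability, and $W^{1,\infty}$ regularity — by examining the Hermite-type interpolation problem in Definition \ref{def:grec} on each time slab $[t_n, t_{n+1}]$ separately, and then checking that the pieces match up with enough continuity across the nodes $t_n$.

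First I would fix a slab $[t_n, t_{n+1}]$ and observe that $\vut$ restricted to this slab is sought in $\mathbb{P}_3((t_n,t_{n+1}), \fes_q^s)$, which, for each fixed spatial degree of freedom, is a four-dimensional space of scalar cubic polynomials in $t$. The four conditions in \eqref{grec} — prescribing the value and the first derivative at each of the two endpoints $t_n$ and $t_{n+1}$ — constitute exactly the classical cubic Hermite interpolation problem, which is unisolvent: the $4\times 4$ Hermite matrix is invertible. Hence on each slab, and for each spatial basis coefficient, there is a unique cubic, so $\vut|_{[t_n,t_{n+1}]}$ exists and is unique as an element of $\mathbb{P}_3((t_n,t_{n+1}), \fes_q^s)$. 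This gives well-definedness. Computability is then immediate: the data feeding the interpolation, namely the $\vec u_h^j$ together with $-\bg(\vec u_h^j) + \veps \bD(\vec u_h^j)$, are all explicitly available from the RKdG scheme output and the (computable) discrete operators $\bg, \bD$, and the Hermite basis functions on $[t_n,t_{n+1}]$ are known closed-form cubics; so $\vut$ is obtained by an explicit local formula on each slab.

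Next I would address the global regularity. Since $\vut|_{[t_n,t_{n+1}]}$ and $\vut|_{[t_{n-1},t_n]}$ both take the value $\vec u_h^n$ at $t_n$, the reconstruction is continuous across each interior node; since both also have time-derivative $-\bg(\vec u_h^n) + \veps\bD(\vec u_h^n)$ at $t_n$, the derivative matches as well, so in fact $\vut \in C^1(0,T; \fes_q^s)$. On each slab, $\vut$ is a polynomial in $t$ with values in the finite-dimensional space $\fes_q^s$, hence smooth there, and $\partial_t \vut$ is a (vector-valued) quadratic, bounded on the closed slab; taking the maximum over the finitely many slabs shows $\partial_t \vut \in L^\infty(0,T;\fes_q^s)$, and likewise $\vut \in L^\infty$. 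Therefore $\vut \in W^{1,\infty}(0,T;\fes_q^s)$, which is the claimed membership (indeed slightly more, as the lemma's surrounding text notes the $C^0$/$C^1$ distinction).

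The argument is essentially routine, being a packaging of unisolvence of cubic Hermite interpolation plus a node-matching check, so there is no serious obstacle; the only point requiring a little care is confirming that the \emph{two} derivative conditions at the shared node $t_n$ are mutually consistent — they are, because the same quantity $-\bg(\vec u_h^n)+\veps\bD(\vec u_h^n)$ is prescribed from both sides — which is precisely what upgrades the conclusion from $C^0$ to $C^1$ and keeps the piecewise-cubic definition unambiguous. I would also remark that the $W^{1,\infty}$ bound is uniform once the discrete data are fixed, though quantitative dependence on $\tau_n$, $h$, and $\veps$ (via the Lipschitz constant of $\bg$, $\bD$) is deferred to later sections where it actually enters the error estimates.
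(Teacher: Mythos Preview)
Your proof is correct and is exactly the standard argument one would give: unisolvence of cubic Hermite interpolation on each slab gives well-definedness and computability, and the matching of values and derivatives at the nodes $t_n$ yields $C^1$ (hence $W^{1,\infty}$) regularity in time. The paper itself does not supply a proof for this lemma but simply cites \cite{DG16}; your argument is precisely what that reference contains, so there is nothing to compare.
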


Having obtained a temporal reconstruction, the next step is to perform a spatial reconstruction to obtain a sufficiently smooth object $\widehat{\vec
u}^{ts}$. Our approach is based on the arguments for hyperbolic
conservation laws in \cite{GMP_15}. To this end, we make some
assumptions on the form of numerical fluxes ${\vec F}$ introduced in
\eqref{dgscheme}.

\begin{assumption}[Condition on the numerical flux]\label{ass1}
  We assume that there exists a locally Lipschitz continuous function
  ${\vec w}: \reals^m \times \reals^m \rightarrow \reals^m$ satisfying the following: for any compact $K \subset \reals^m$, there exists a constant $C_w(K)
  >0$ such that
  \begin{equation}\label{w-cond}
    |{\vec w}({\vec a}, {\vec b}) - {\vec a} | + |{\vec w}({\vec a}, {\vec b}) - {\vec b} | \leq 
    C_w(K)|{\vec b} - {\vec a}| \quad \forall \ {\vec a}, {\vec b} \in K.
  \end{equation}
  With this function $\vec w$, the numerical flux ${\vec F}$ is assumed to take one of the following two forms:
  \begin{itemize}
    \item[(i)]   
    $\ 
    {\vec F}({\vec a}, {\vec b}) ={\vec f}({\vec w}({\vec a}, {\vec b})) \quad \forall \ {\vec a}, {\vec b} \in K;
    $
    \item [(ii)]    
    $\ 
    {\vec F}({\vec a}, {\vec b}) ={\vec f}({\vec w}({\vec a}, {\vec b}))  - \mu({\vec a},{\vec b};h) 
    h^\nu ({\vec b} - {\vec a}) \quad \forall \ {\vec a}, {\vec b} \in K
    $\\
    for some $\nu \in \mathbb{N}_0$ and some matrix-valued function $\mu$, which satisfies the property that for any
    compact $K \subset \reals^m$, there exists a constant $\mu_K > 0$ such that
    $|\mu({\vec a},{\vec b};h)| \leq \mu_K\left(1+\frac{|{\vec a}-{\vec b}|}{h}\right)$
    for $h$ small enough.
  \end{itemize}
\end{assumption}

\begin{remark}[Admissible numerical fluxes] \label{rem:fluxes}
  The rather abstract conditions given in Assumption \ref{ass1} are satisfied in practice by various well-known numerical fluxes. For
  example, the Lax-Wendroff and Richtmyer numerical fluxes defined by
  \begin{equation}\label{def:LW}
    {\vec F}({\vec a},{\vec b})= {\vec f}({\vec w}({\vec a},{\vec b})), 
    \quad {\vec w}({\vec a},{\vec b})= \frac{{\vec a} + {\vec b}}{2} - \frac{\lambda}{2} ( {\vec f}({\vec b}) - {\vec f}({\vec a})),
  \end{equation}
  satisfy Assumption \ref{ass1} (i).
  
  The Lax-Friedrichs flux defined by
  \begin{equation}\label{def:LLF}
    {\vec F}({\vec a}, {\vec b}) =\frac{1}{2} \Big(
    {\vec f}({\vec a}) + {\vec f}({\vec b}) \Big)  - \lambda ({\vec b} - {\vec a})
  \end{equation}
  satisfies Assumption \ref{ass1} (ii) with
  ${\vec w}({\vec a},{\vec b})=\frac{1}{2}({\vec a}+{\vec b})$, $\nu=0$, and
  \begin{equation*}
    \mu({\vec a},{\vec b};h)=\lambda \mathbb{I} - \frac{{\vec f}({\vec a}) - 2{\vec f}({\vec w}({\vec a},{\vec b})) + {\vec f}({\vec b})}
    {2\norm{{\vec b}-{\vec a}}^2} \otimes ({\vec b}-{\vec a})~.
  \end{equation*}
  where $\mathbb{I}$ denotes the $m \times m$ identity matrix. 
  
  For a detailed discussion on the conditions imposed in Assumption
  \ref{ass1}, we refer the reader to \cite[Remark 3.6]{DG16}.
\end{remark}

The space-time reconstruction $\widehat{\vec u}^{ts}$ is obtained
by applying a spatial reconstruction operator to the temporal reconstruction $\widehat{\vec
u}^t(t,\cdot)$ at each time $t$.
This reconstruction incorporates the numerical flux structure
via the function ${\vec w}$ defined in Assumption \ref{ass1}.

\begin{definition}[Space-time reconstruction]\label{def:str}
  For each fixed $t\in [0,T]$, let $\vut(t,\cdot)$ be the temporal reconstruction given in Definition \ref{def:grec}.  
  Then the space-time reconstruction $\strec(t,\cdot)$ is defined as the unique element
  of $ \fes_{q+1}^s$ satisfying
  \begin{align}\label{srec}
      \int_{\W} (\widehat{\vec u}^{ts}(t,\cdot) - \vut(t,\cdot)) \cdot {\vec \psi_h} &= 0
      \quad \forall {\vec \psi_h} \in \fes_{q-1}^s\nonumber\\
      \widehat{\vec u}^{ts}(t, x_k^\pm)&= {\vec w}(\vut(t,x_k^-),\vut(t,x_k^+))\quad \forall k.
  \end{align}
\end{definition}

\begin{lemma}[Properties of space-time reconstruction \cite{DG16}] \label{lem:space-time-reconst}
  The space-time reconstruction $\strec$, as defined in Definition \ref{def:str}, is well-defined, computable, and belongs to the space $\operatorname{W}^{1,\infty}(0,T; \fes_{q+1}^s \cap C(\W, \reals^m))$. Moreover, $\strec$ is Lipschitz continuous in space.
\end{lemma}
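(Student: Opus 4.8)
The plan is to establish the four assertions — well-definedness, computability, $\operatorname{W}^{1,\infty}$-regularity in time, and Lipschitz continuity in space — by reducing the construction in Definition \ref{def:str} to an element-local linear algebra problem and then tracking how the data of that problem depend on $\vut$. First I would verify well-posedness of the local system. On an element $I_j=(x_j,x_{j+1})$, a member of $\mathbb{P}_{q+1}(I_j,\reals^m)$ carries $m(q+2)$ degrees of freedom, while \eqref{srec} prescribes, componentwise, the $q$ moments against $\mathbb{P}_{q-1}(I_j)$ together with the two endpoint values at $x_j^+$ and $x_{j+1}^-$ — exactly $q+2$ conditions per component. To see that the homogeneous system is trivial, I would write a scalar degree-$(q+1)$ polynomial vanishing at both endpoints as $(x-x_j)(x_{j+1}-x)\,r(x)$ with $r\in\mathbb{P}_{q-1}(I_j)$, test the moment condition against $r$ itself, and use positivity of the weight $(x-x_j)(x_{j+1}-x)$ on the interior of $I_j$ to conclude $r\equiv 0$. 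This gives uniqueness and solvability on each element; the nodal conditions in \eqref{srec} then force the one-sided limits of $\strec(t,\cdot)$ to agree at every $x_k$ (with the periodic identification of $x_0$ and $x_M$), so $\strec(t,\cdot)\in\fes_{q+1}^s\cap C(\W,\reals^m)$. Computability follows because the right-hand-side data — the moments $\int_{I_j}\vut(t,\cdot)\cdot\vec\psi_h\,\d x$ and the nodal quantities $\vec w(\vut(t,x_k^-),\vut(t,x_k^+))$ — are computable from $\vut$, which is computable by Lemma \ref{lem:time-reconst}.

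For the regularity in time I would argue as follows. By Lemma \ref{lem:time-reconst}, $t\mapsto\vut(t,\cdot)$ is Lipschitz into the finite-dimensional space $\fes_q^s$, so its range, and in particular the collection of nodal traces $\vut(t,x_k^\pm)$, is contained in a compact set $K\subset\reals^m$. On $K\times K$ the function $\vec w$ of Assumption \ref{ass1} is Lipschitz, hence each map $t\mapsto\vec w(\vut(t,x_k^-),\vut(t,x_k^+))$ is Lipschitz; the moment data $t\mapsto\int_{I_j}\vut(t,\cdot)\cdot\vec\psi_h\,\d x$ are likewise Lipschitz (in fact piecewise polynomial in $t$). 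Since $\strec(t,\cdot)$ is the image of these data under the fixed, $t$-independent, invertible element-local linear map from the first step, its polynomial coefficients inherit Lipschitz dependence on $t$, which yields $\strec\in\operatorname{W}^{1,\infty}(0,T;\fes_{q+1}^s\cap C(\W,\reals^m))$. For the spatial Lipschitz continuity I would note that, for each fixed $t$, $\strec(t,\cdot)$ is a polynomial on each of the finitely many elements and continuous across interfaces, hence Lipschitz on the bounded domain $\W$; and since the coefficients stay in the continuous image of the compact interval $[0,T]$, $\partial_x\strec$ is bounded uniformly in $t$, so the spatial Lipschitz constant is uniform in time.

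I expect the main obstacle to be the time-regularity step: because $\vec w$ is only \emph{locally} Lipschitz, one must first secure that the arguments $\vut(t,x_k^\pm)$ never leave a fixed compact set as $t$ ranges over $[0,T]$, which is precisely the content of the $\operatorname{W}^{1,\infty}$-bound on $\vut$ supplied by Lemma \ref{lem:time-reconst}; everything else is routine finite-dimensional linear algebra on a fixed mesh. This is the line of argument carried out in detail in \cite{DG16}.
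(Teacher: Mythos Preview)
Your proposal is correct and is exactly the standard argument; the paper itself does not supply a proof but simply cites \cite{DG16}, and your sketch reproduces that line of reasoning (local unisolvence via the factorisation $(x-x_j)(x_{j+1}-x)r(x)$ with $r\in\mathbb{P}_{q-1}$, then Lipschitz dependence of the data through the local Lipschitz continuity of $\vec w$ on the compact range of $\vut$). Nothing needs to be added.
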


\begin{remark}[Regularity of space-time reconstruction]
  Note that the spatial reconstruction is a \emph{nonlinear} operation
  and, in particular, the temporal regularity of the space-time
  reconstruction $\strec$ strongly depends on $\vec w$. When $\vec w$
  is nonlinear, $\strec$ is not a polynomial in time, in general, which
  makes its numerical representation  delicate. 
\end{remark}

\subsection{Decomposing the residual}\label{sec:dres}

We define the residual by inserting the space-time
reconstruction $\strec$ into the continuous problem \eqref{eq:cd-intro}:
\begin{equation}\label{def_rec}
  \vec r
  :=
  \partial_t \strec
  +
  \partial_x \vec f(\strec)
  -
  \veps \partial_x (\vec A(\strec) \partial_x\strec ).
\end{equation}
\begin{remark}[Regularity of the residual]
  The regularity of the residual $\vec r$ depends on both $\veps$ and the rank of $\vec A$.
  If $\veps = 0$, then $\vec r \in \leb{2}((0,T) \times \W,\reals^m)$.
  However, if $\veps > 0$, then $\vec r \in \leb{2}(0,T;\sobh{-1}(\W,\reals^m))$ but not in $\leb{2}((0,T) \times \W,\reals^m)$.
  Furthermore, when $\veps > 0$ and $\vec A$ does not have full rank, certain components of $\vec r$ are in $\leb{2}((0,T) \times \W, \reals^m)$, while the remaining components belong only to $\leb{2}(0,T;\sobh{-1}(\W, \reals^m))$.
\end{remark}

To handle the hyperbolic, degenerate parabolic, and fully parabolic cases simultaneously, we decompose the residual $\vec r$ into two components: 
\begin{equation}\label{eq:res-decomp}
  \vec r = \vec r_1 + \veps\vec r_2,
\end{equation}
where $\vec r_1$ represents a ``hyperbolic'' contribution and $\vec r_2$ a ``parabolic'' contribution to the
residual. 
Specifically, we define
\begin{equation}\label{eq:r1}
  \vec r_1 :=
  \partial_t \strec + \partial_x \vec f(\strec) - \veps \bD(\vut)
  \in
  \leb{2}((0,T) \times \W,\reals^m),
\end{equation}
and
\begin{equation}\label{eq:r2}
  \vec r_2 :=
  \qp{
  \bD(\vut)
  -
  \partial_x (\vec A(\strec) \partial_x\strec ) }
  \in
  \leb{2}(0,T;\sobh{-1}(\W,\reals^m)).
\end{equation}
In the case $\veps = 0$, we trivially have $\vec r \equiv \vec
r_1$, and thus the analysis falls into the framework of \cite{DG16}.

\begin{remark}[Splitting the residual]
  Any splitting that ensures $\vec r_1 \in \leb{2}((0,T) \times \W, \reals^m)$ provides a posteriori error bounds and we do not have a theoretical argument that shows that \eqref{eq:res-decomp} is the optimal choice.
  Indeed, other choices, such as using $\veps\bD(\strec)$ instead of $\veps\bD(\vut)$, are possible.
  Our numerical experiments show that the splitting \eqref{eq:res-decomp} leads to error estimators with good scaling properties, see \S \ref{sec:num}.
\end{remark}

\section{General Relative Entropy Estimates}
\label{sec:gref}

In this section we present the assumptions of the system and introduce the relative entropy framework, the second building block of our analysis. 

Let $m \in \rN$ and $U \subset \reals^m$ be an open set.
We consider the general nonlinear convection-diffusion system
\begin{equation}\label{eq:cd}
  \partial_t \vec u + \partial_x \vec f(\vec u)
  =
  \veps  \partial_x (\vec A(\vec u) \partial_x \vec u),
\end{equation}
where $\vec f \in \cont{2}(U, \reals^m)$ and $\vec A \in \cont{1}(U, \reals^{m \times m})$.
Our goal is to compare weak solutions $\vec u$ of the system \eqref{eq:cd} to strong solutions $\hatu \in
\sob{1}{\infty}((0,T) \times \W,\reals^m)$ of the perturbed system
\begin{equation}\label{eq:cdr}
  \partial_t \hatu + \partial_x \vec f(\hatu)
  =
  \veps \partial_x (\vec A(\hatu) \partial_x \hatu) + \vec r
\end{equation} 
where the residual is decomposed as $\vec r = \vec r_1 + \veps\vec r_2$, with 
$$\vec r_1 \in
\leb{2}((0,T) \times \W, \reals^m), \ \vec r_2 \in \leb{2}(0,T;
\sobh{-1}( \W, \reals^m)). $$
Note that the regularity assumptions on the residuals
are precisely what we obtain by using the reconstructions described in
the previous section. The precise regularity required for weak
solutions of \eqref{eq:cd} depends on the rank of $\vec A$ and will be
made explicit in Definition \ref{def:entsol}.

We assume that the system \eqref{eq:cd} is endowed with one strictly convex entropy/entropy-flux pair
$\eta \in \cont{2}(U, \reals ), q \in \cont{1}(U, \reals ) $ satisfying the compatibility relations
\begin{equation}\label{def:efp}
  \D q = \D \eta \D \vec f.
\end{equation}
Note that existence of $q$ implies the following commutative property:
\begin{equation}
  \label{eq:comm}
  \Transpose{(\D \vec f)} \D^2 \eta = \D^2\eta \D \vec f.
\end{equation}
Moreover, when $U$ is simply connected, this commutative property is
also a sufficient condition for the existence of some $q$ satisfying \eqref{def:efp}.

\begin{remark}[Entropy]
  In one spatial dimension, the commutative property \eqref{eq:comm} amounts to $\tfrac{1}{2} m
  (m-1)$ equations for $m$ unknowns.  Thus, for the scalar case ($m=1$),
  any function $\eta$ defines an entropy/entropy-flux pair. 
  For $m > 1$, the situation is more involved, and the existence of an entropy is not
  always straightforward. Nevertheless, most physically relevant systems of the
  form \eqref{eq:cd} naturally admit an entropy/entropy-flux pair \cite{Daf10}.
\end{remark}

When $\vec u$ is sufficiently smooth, multiplying \eqref{eq:cd} by $\D
\eta(\vec u)$ yields the entropy balance:
\begin{equation*}
  \partial_t \eta(\vec u)
  +
  \partial_x ( q(\vec u) - \veps \D \eta(\vec u) \vec A(\vec u) \partial_x \vec u)
  =
  - \veps \Transpose{\partial_x \vec u} \D^2 \eta(\vec u) \vec A(\vec u) \partial_x \vec u
\end{equation*}
where $\vec u$ are interpreted as a column vector, and $\D \eta(\vec u)$ as a row vector.
We restrict our attention to weak solutions which weakly satisfy the following
entropy inequality:
\begin{equation}\label{eq:eniq}
  \partial_t \eta(\vec u)
  +
  \partial_x ( q(\vec u) - \veps \D \eta(\vec u) \vec A(\vec u) \partial_x \vec u)
  \leq
  -\veps \Transpose{\partial_x \vec u} \D^2 \eta(\vec u) \vec A(\vec u) \partial_x \vec u.
\end{equation}
This is standard in the study of hyperbolic conservation laws, \ie the
system \eqref{eq:cd} with $\veps = 0$. The conditions we impose here
form a natural extension to include diffusion terms.

\begin{definition}[Entropy solution]\label{def:entsol}
  A function $\vec u \in \leb{\infty}((0,T) \times
  \W, \reals^m)$ with $\veps \vec A(\vec u) \partial_x {\vec u} \in
  \leb{1}((0,T)\times \W, \reals^m)$
  is called a \emph{weak solution} if it satisfies
  \begin{equation}\label{def:weak_sol} 
    \int_0^T \int_\W \partial_t \vec \phi \cdot \vec u + \partial_x \vec \phi \cdot (\vec f(\vec u) 
    - \veps \vec A(\vec u) \partial_x \vec u)\d x \d t
    + \int_\W \vec u_0 \cdot \vec \phi(0,\cdot) \d x=0
  \end{equation}
  for all $\vec \phi \in \cont{\infty}_0([0,T) \times \W, \reals^m).$
  We call a weak solution
  an \emph{entropy solution} of
  \eqref{eq:cd} with respect to an entropy/entropy-flux pair
  $(\eta,q)$ provided that it satisfies the additional regularity
  requirement
  \begin{equation*}
    \veps \Transpose{(\partial_x( \D\eta(\vec u)))} \vec A(\vec u) \partial_x \vec u 
    \in  \sob{-1}{1}((0,T) \times \W, \reals^m)
  \end{equation*}
  and it satisfies
  \begin{multline}\label{def:entr_sol}
    \int_0^T \int_\W \partial_t \Phi \eta(\vec u) + \partial_x \Phi (q(\vec u) 
    - \veps \D \eta(\vec u) \vec  A(\vec u) \partial_x \vec u)
    - \veps \Phi \Transpose{(\partial_x(\D \eta(\vec u)))} \vec A(\vec u) \partial_x \vec u \d x \d t 
    \\
    + \int_\W \Phi(0,\cdot) \eta(\vec u_0) \d x 
    \geq 0
  \end{multline}
  for all $\Phi \in \cont{\infty}_0([0,T) \times \W, \reals_+).$
\end{definition}

Note that, by continuity arguments, both \eqref{def:weak_sol} and \eqref{def:entr_sol} admit Lipschitz test functions.
Since \eqref{eq:cdr} admits Lipschitz test functions, we have
\begin{equation}\label{weak_sol_res}
  \int_0^T \int_\W \partial_t \vec \phi \cdot \hatu + \partial_x \vec \phi \cdot(\vec f(\hatu) 
  - \veps \vec A(\hatu) \partial_x \hatu) + \vec r \cdot \vec \phi\d x \d t
  + \int_\W \hatu(0,\cdot) \cdot\vec \phi(0,\cdot)\d x=0
\end{equation}
for all $\vec \phi \in \sob{1}{\infty}_0([0,T) \times \W,\reals^m).$
Equation \eqref{weak_sol_res} also admits Lipschitz test functions vanishing at time $T$ and on $\partial\W$.
Thus, we may choose $\vec \phi = \Phi \D \eta(\hatu)$ which yields
\begin{equation}\label{entr_sol_res}
  \int_0^T \int_\W \partial_t \Phi \eta(\hatu) + \partial_x \Phi q(\hatu)  
  -\partial_x (\Phi \D \eta(\hatu)) \cdot \veps \vec A(\hatu) \partial_x \hatu + (\Phi \D \eta(\hatu)) \cdot \vec r  \d x \d t
  + \int_\W  \Phi(0,\cdot) \eta(\hatu(0,\cdot)) \d x=0.
\end{equation}
To derive \eqref{entr_sol_res}, we apply integration by parts to both the temporal and spatial terms.

For the temporal term, we first use the chain rule $\partial_t \eta(\hatu) = \D \eta(\hatu) \cdot \partial_t \hatu$ and then integrate by parts:
\begin{align*}
  \int_0^T \int_\W \partial_t (\Phi \D \eta(\hatu)) \cdot \hatu \d x \d t
  + \int_\W \Phi(0,\cdot) \D \eta(\hatu(0,\cdot)) \cdot \hatu(0,\cdot)  \d x
  &= - \int_0^T \int_\W \Phi \partial_t \eta(\hatu)\d x \d t\\
  &= \int_0^T \int_\W \partial_t \Phi  \eta(\hatu)\d x \d t
  + \int_\W  \Phi(0,\cdot) \eta(\hatu(0,\cdot))\d x.
\end{align*}
Similarly, for the spatial term, integration by parts combined with the entropy flux compatibility condition $\D q = \D \eta \D \vec f$ gives
\begin{equation*}
  \int_0^T \int_\W \partial_x (\Phi \D \eta(\hatu)) \cdot \vec f(\hatu) \d x \d t
  =  \int_0^T \int_\W \partial_x \Phi  q(\hatu)\d x \d t.
\end{equation*}

\begin{remark}[Well-posedness]
  The existence and uniqueness of entropy solutions to \eqref{eq:cd} is a delicate topic which cannot be answered completely in general. For example, in scalar
  hyperbolic conservation laws with non-convex flux, e.g., $f(u)=u^3$ with
  $\veps=0$, entropy solutions are not unique when only one entropy
  inequality is prescribed for one entropy/entropy-flux pair
  \cite{JMS95}. 
  Indeed, the stability estimates we obtain do not imply
  uniqueness of entropy solutions, but weak-strong uniqueness, \ie entropy solutions are unique as long as a Lipschitz continuous entropy solution exists.
\end{remark}

We now introduce the relative entropy framework for comparing solutions 
of \eqref{eq:cd} and \eqref{eq:cdr}.
This methodology, widely used in
fluid mechanics for Euler and Navier-Stokes-Fourier equations,
provides the stability estimates needed for our analysis.
\begin{definition}[Relative entropy]
  \label{def:relent}
  For a strictly convex entropy/entropy-flux pair $(\eta,q)$, \emph{relative
  entropy} and \emph{relative entropy-flux} between two states $\vec u, \vec v
  \in U$ are defined as
  \begin{equation}\nonumber
      \eta(\vec u| \vec v)
      :=
      \eta(\vec u) - \eta(\vec v) - \D \eta(\vec v)(\vec u - \vec v),\quad
      q(\vec u| \vec v)
      :=
      q(\vec u) - q(\vec v) - \D \eta(\vec v)(\vec f(\vec u) - \vec f(\vec v)).
  \end{equation}
\end{definition}

\begin{lemma}[Relative entropy inequality]
  \label{lem:greb}
  Let $\vec u$ be an entropy solution to \eqref{eq:cd} with respect to an entropy/entropy-flux pair $(\eta,q)$, and let $\hatu \in \sob{1}{\infty}((0,T) \times \W, \reals^m)$ be a Lipschitz continuous solution of \eqref{eq:cdr}.
  We assume that
  \begin{equation}\nonumber
    \veps \partial_x( \D^2 \eta(\hatu) (\vec u - \hatu))\cdot \vec A(\hatu) \partial_x \hatu \in \leb{1}([0,T) \times \W)
  \end{equation}
  and the pairing 
  \begin{equation}\nonumber
    \veps\langle \vec r_2 , \Phi \D^2 \eta(\hatu) (\vec u - \hatu) \rangle
  \end{equation}
  is well-defined for any Lipschitz continuous test function $\Phi$. Here $\langle \cdot, \cdot \rangle$ denotes the pairing of $\leb{2}(0,T;\sobh{-1}(\W, \reals^m))$ and $\leb{2}(0,T;\sobh{1}_0(\W, \reals^m))$.
  Then, for any $\Phi \in  \sob{1}{\infty}_0([0,T) \times \W, \reals_+)$ the relative entropy between $\vec u$ and $\hatu$ weakly satisfies the following inequality:
  \begin{multline*}
    0 \leq \int_0^T \int_\W \bigg[ \partial_t \Phi \eta(\vec u| \hatu) 
      + \partial_x \Phi \Big( q(\vec u|\hatu) - \veps (\D \eta(\vec u) - \D \eta(\hatu)) \vec A(\vec u) \partial_x \vec u \\
      + \veps (\vec u - \hatu) \D^2 \eta(\hatu) \vec A(\hatu) \partial_x \hatu \Big) \bigg] \d x \d t \\
      + \int_0^T \int_\W \Phi \bigg[ - \veps \partial_x(\D \eta(\vec u) - \D \eta(\hatu)) \vec A(\vec u) \partial_x \vec u 
      + \veps \vec A(\hatu) \partial_x \hatu \cdot \partial_x( \D^2 \eta(\hatu) (\vec u - \hatu)) \\
      - \vec r_1 \cdot \D^2 \eta(\hatu) (\vec u - \hatu) - \partial_x(\D \eta(\hatu))\vec f(\vec u|\hatu) \bigg] \d x \d t \\
      - \veps\langle \vec r_2 , \Phi \D^2 \eta(\hatu) (\vec u - \hatu) \rangle + \int_\W \Phi(0,\cdot) \eta(\vec u_0| \hatu(0,\cdot))\d x
  \end{multline*}
  with
  \begin{equation}\nonumber
      \vec f (\vec u| \hatu) := \vec f(\vec u) - \vec f(\hatu) - \D \vec f(\hatu) (\vec u- \hatu).
  \end{equation}
\end{lemma}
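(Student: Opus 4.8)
The plan is to derive the relative entropy inequality by subtracting the entropy balance for the reconstruction $\hatu$ from the entropy inequality for $\vec u$, and then adding a correction term coming from the linearisation $\D\eta(\hatu)$. Concretely, I would start from three weak identities/inequalities, all tested against the same nonnegative Lipschitz $\Phi$: first, the entropy inequality \eqref{def:entr_sol} for the entropy solution $\vec u$; second, the entropy identity \eqref{entr_sol_res} for $\hatu$ (already derived in the excerpt) taken with a minus sign; and third, the weak form \eqref{weak_sol_res} for $\hatu$ together with the weak form \eqref{def:weak_sol} for $\vec u$, each tested with the Lipschitz function $\vec\phi = -\Phi\,\D^2\eta(\hatu)(\vec u - \hatu)$ (legitimate by the density/continuity remark and the regularity hypotheses in the statement). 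Summing these produces, on the left, $\int\!\!\int \partial_t\Phi\,\eta(\vec u\vert\hatu)$ plus spatial-flux terms, and on the right the residual pairings and the various $\veps$-diffusion contributions; the initial-data terms combine into $\int_\W \Phi(0,\cdot)\eta(\vec u_0\vert\hatu(0,\cdot))$.

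The key algebraic step is to recognise how the convective terms reorganise into the relative quantities. From the $\veps=0$ convective parts one gets $\partial_x\Phi(q(\vec u) - q(\hatu)) - \partial_x(\Phi\D\eta(\hatu))\cdot(\vec f(\vec u)-\vec f(\hatu)) + $ [terms from the test function $-\Phi\D^2\eta(\hatu)(\vec u-\hatu)$ acting on $\partial_x\vec f(\vec u)$ and $\partial_x\vec f(\hatu)$]. Using the compatibility relation \eqref{def:efp}, i.e. $\D q = \D\eta\,\D\vec f$, together with the commutativity \eqref{eq:comm}, the $\partial_x\Phi$ pieces assemble into $\partial_x\Phi\,q(\vec u\vert\hatu)$, while the leftover pieces carrying $\partial_x(\D\eta(\hatu))$ assemble into $-\partial_x(\D\eta(\hatu))\,\vec f(\vec u\vert\hatu)$; here one expands $\partial_x(\Phi\D\eta(\hatu)) = \partial_x\Phi\,\D\eta(\hatu) + \Phi\,\partial_x(\D\eta(\hatu))$ and matches terms. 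The $\partial_t$ terms collapse immediately via $\partial_t\eta(\vec u\vert\hatu)$ being generated by $\partial_t\Phi$ acting on $\eta(\vec u) - \eta(\hatu)$ and by $\partial_t(\Phi\D\eta(\hatu))$ acting on $-(\vec u-\hatu)$, after integrating by parts in $t$ and using $\D\eta(\hatu)\partial_t\hatu$ relations. The $\veps$-diffusion terms are handled by bookkeeping: the $\vec A(\vec u)\partial_x\vec u$ contributions from \eqref{def:entr_sol} and from the test function in \eqref{def:weak_sol} combine to $-\veps\partial_x(\D\eta(\vec u)-\D\eta(\hatu))\vec A(\vec u)\partial_x\vec u$ in the $\Phi$-bracket and $-\veps(\D\eta(\vec u)-\D\eta(\hatu))\vec A(\vec u)\partial_x\vec u$ in the $\partial_x\Phi$-bracket; the $\vec A(\hatu)\partial_x\hatu$ contributions from \eqref{entr_sol_res} and from the test function in \eqref{weak_sol_res} likewise combine, after an integration by parts moving $\partial_x$ off $\Phi\D^2\eta(\hatu)(\vec u-\hatu)$, into $+\veps(\vec u-\hatu)\D^2\eta(\hatu)\vec A(\hatu)\partial_x\hatu$ under $\partial_x\Phi$ and $+\veps\vec A(\hatu)\partial_x\hatu\cdot\partial_x(\D^2\eta(\hatu)(\vec u-\hatu))$ under $\Phi$. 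Finally the residual terms: $\vec r_1$ pairs in $\leb2$ against $-\Phi\D^2\eta(\hatu)(\vec u-\hatu)$ giving the term $-\vec r_1\cdot\D^2\eta(\hatu)(\vec u-\hatu)$, and $\veps\vec r_2$ contributes the duality pairing $-\veps\langle\vec r_2,\Phi\D^2\eta(\hatu)(\vec u-\hatu)\rangle$, which is exactly the pairing assumed well-defined in the hypothesis.

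The main obstacle is the rigorous justification of the manipulations in the low-regularity setting: the entropy solution $\vec u$ is only $\leb\infty$ with $\veps\vec A(\vec u)\partial_x\vec u\in\leb1$, so several products (for instance $\partial_x(\D\eta(\vec u))\vec A(\vec u)\partial_x\vec u$, and the pairing of $\vec r_2$ against $\Phi\D^2\eta(\hatu)(\vec u-\hatu)$) are only meaningful under the precise regularity assumptions built into Definition \ref{def:entsol} and into the hypotheses of the lemma — this is why the statement explicitly postulates that $\veps\partial_x(\D^2\eta(\hatu)(\vec u-\hatu))\cdot\vec A(\hatu)\partial_x\hatu\in\leb1$ and that the $\vec r_2$-pairing is well-defined. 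I would therefore carry out the computation first formally (assuming $\vec u$ smooth) to fix the identity, then argue that every term appearing is well-defined under the stated hypotheses, using that $\hatu\in\sob1\infty$ so $\D\eta(\hatu),\D^2\eta(\hatu)$ and their spatial derivatives are bounded, and invoking the admissibility of Lipschitz test functions noted after Definition \ref{def:entsol}. A secondary technical point is the integration by parts in $x$ on the $\veps\vec A(\hatu)\partial_x\hatu$ terms: it is clean because $\hatu$ is Lipschitz, but one must keep $\Phi\in\sob1\infty_0$ so boundary terms on $\partial\W$ vanish, and one must be careful that $\vec u-\hatu$ need not have a trace — this is circumvented because after the integration by parts the $x$-derivative falls on $\hatu$-dependent factors and on $\Phi$, never requiring a boundary value of $\vec u$.
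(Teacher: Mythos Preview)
Your overall strategy --- combine the entropy inequality for $\vec u$, the entropy identity for $\hatu$, and the two weak formulations tested against suitable functions --- is exactly the standard Dafermos route and matches the paper. However, your choice of test function in the third step is wrong and this is a genuine gap, not a cosmetic one.

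You propose to test both \eqref{def:weak_sol} and \eqref{weak_sol_res} with $\vec\phi = -\Phi\,\D^2\eta(\hatu)(\vec u - \hatu)$, and you call this a ``Lipschitz function''. It is not: the entropy solution $\vec u$ is only in $\leb\infty$ (with $\veps\vec A(\vec u)\partial_x\vec u\in\leb1$), so $\D^2\eta(\hatu)(\vec u-\hatu)$ has no more regularity than $\leb\infty$ and is certainly not in $\sob{1}{\infty}$. The weak formulation \eqref{def:weak_sol} requires that you differentiate the test function in both $t$ and $x$; with your $\vec\phi$ this produces $\partial_t\vec u$ and $\partial_x\vec u$ inside the integrand, which is circular and not justified by any of the hypotheses in the lemma. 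The regularity assumptions in the statement (the $\leb1$ condition on $\veps\partial_x(\D^2\eta(\hatu)(\vec u-\hatu))\cdot\vec A(\hatu)\partial_x\hatu$ and the well-definedness of the $\vec r_2$ pairing) are there to make sense of certain terms \emph{after} the computation, not to upgrade $\vec u$ so that it can sit inside a test function.

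The paper's fix is to test \eqref{def:weak_sol} and \eqref{weak_sol_res} with $\vec\phi = \mp\Phi\,\D\eta(\hatu)$, which \emph{is} Lipschitz because it depends only on $\hatu\in\sob1\infty$ and $\Phi$. Summing the four contributions then gives an expression containing $-\partial_t(\D\eta(\hatu))\cdot(\vec u-\hatu)$; one applies the chain rule $\partial_t(\D\eta(\hatu)) = \D^2\eta(\hatu)\,\partial_t\hatu$ (valid a.e.\ since $\hatu$ is Lipschitz) and substitutes the strong equation \eqref{eq:cdr} for $\partial_t\hatu$. This is precisely where $\D^2\eta(\hatu)(\vec u-\hatu)$ enters, and it is where the residual terms $\vec r_1,\vec r_2$ appear with the factor $\D^2\eta(\hatu)(\vec u-\hatu)$. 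The commutativity \eqref{eq:comm} is then used to combine $\D^2\eta(\hatu)(\vec u-\hatu)\cdot\D\vec f(\hatu)\partial_x\hatu$ with $-\partial_x(\D\eta(\hatu))(\vec f(\vec u)-\vec f(\hatu))$ into $-\partial_x(\D\eta(\hatu))\vec f(\vec u\vert\hatu)$, and a final integration by parts on the $\veps\partial_x(\vec A(\hatu)\partial_x\hatu)$ piece (legitimate since $\Phi$ vanishes on $\partial\W$) yields the two $\vec A(\hatu)\partial_x\hatu$ terms in the statement. Your later sketch in fact drifts towards this --- you write ``$\partial_t(\Phi\D\eta(\hatu))$ acting on $-(\vec u-\hatu)$'' --- which is inconsistent with your declared $\vec\phi$ and suggests you are conflating the two approaches.
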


\begin{proof}
  For $\Phi \in \sob{1}{\infty}_0([0,T) \times \W, \reals_+)$, we substitute
  $\Phi$ in \eqref{def:entr_sol}, $-\Phi$ in \eqref{entr_sol_res}, 
  $\vec \phi = - \Phi \D \eta(\hatu)$ in \eqref{def:weak_sol}, and $\vec \phi = \Phi \D \eta(\hatu)$ in \eqref{weak_sol_res}.
  Summing these equations and noting that the terms containing $\vec A(\hatu)$ cancel, we obtain:
  
  \begin{multline}\label{eq:greb5}
    0 \leq \int_0^T \int_\W \bigg[ \partial_t \Phi \eta(\vec u| \hatu) 
      + \partial_x \Phi \Big( q(\vec u|\hatu) 
      - \veps (\D \eta(\vec u) - \D \eta(\hatu)) \vec A(\vec u) \partial_x \vec u \Big) \\
      + \Phi \Big( - \veps \partial_x(\D \eta(\vec u) - \D \eta(\hatu))\vec A(\vec u) \partial_x \vec u
      - \partial_t(\D \eta(\hatu))(\vec u - \hatu) \\
      - \partial_x(\D \eta(\hatu))(\vec f(\vec u) - \vec f(\hatu)) \Big) \bigg] \d x \d t
      + \int_\W \Phi(0,\cdot) \eta(\vec u_0| \hatu(0,\cdot))\d x
  \end{multline}
  
  Since $\hatu$ is Lipschitz continuous and thus differentiable almost everywhere, we can apply the chain rule to expand the terms involving $\D\eta(\hatu)$ in \eqref{eq:greb5}. Using the relation \eqref{eq:cdr} to substitute for $\partial_t \hatu$, we obtain
  \begin{multline}
    0 \leq \int_0^T \int_\W \bigg[ \partial_t \Phi \eta(\vec u| \hatu) 
      + \partial_x \Phi \Big( q(\vec u|\hatu) 
      - \veps (\D \eta(\vec u) - \D \eta(\hatu)) \vec A(\vec u) \partial_x \vec u \Big) \\
      + \Phi \Big( - \veps \partial_x(\D \eta(\vec u) - \D \eta(\hatu))\vec A(\vec u) \partial_x \vec u
      - \partial_x(\D \eta(\hatu))(\vec f(\vec u) - \vec f(\hatu)) \\
      + \D^2 \eta(\hatu)(\vec u - \hatu) \cdot \big[ (\D \vec f (\hatu) \partial_x \hatu)
      - \veps \partial_x ( \vec A(\hatu) \partial_x \hatu) - \vec r_1 \big] \Big) \bigg] \d x \d t \\
      - \veps\langle \vec r_2 , \Phi \D^2 \eta(\hatu) (\vec u - \hatu)\rangle
      + \int_\W \Phi(0,\cdot) \eta(\vec u_0| \hatu(0,\cdot))\d x
  \end{multline}
  To simplify the term involving $\D \vec f (\hatu) \partial_x \hatu$, we apply the commutative property \eqref{eq:comm} to obtain
  \begin{equation*}
    (\D \vec f (\hatu) \partial_x \hatu)\cdot\D^2 \eta(\hatu)(\vec u - \hatu) = \partial_x (\D\eta(\hatu))\D \vec f(\hatu) (\vec u - \hatu).
  \end{equation*} 
  Together with integration by parts and the fact that $\Phi$ vanishes on $[0,T) \times \partial \W$, this yields
  \begin{multline}
    0 \leq \int_0^T \int_\W \bigg[ \partial_t \Phi \eta(\vec u| \hatu) 
      + \partial_x \Phi \Big( q(\vec u|\hatu) 
      - \veps (\D \eta(\vec u) - \D \eta(\hatu)) \vec A(\vec u) \partial_x \vec u \\
      + \veps (\vec A(\hatu) \partial_x \hatu) \cdot \D^2 \eta(\hatu) (\vec u - \hatu) \Big) \\
      + \Phi \Big( - \veps \partial_x(\D \eta(\vec u) - \D \eta(\hatu)) \vec A(\vec u) \partial_x \vec u 
      + \veps (\vec A(\hatu) \partial_x \hatu) \cdot \partial_x( \D^2 \eta(\hatu) (\vec u - \hatu)) \\
      - \vec r_1 \cdot \D^2 \eta(\hatu) (\vec u - \hatu)
      - \partial_x(\D \eta(\hatu))\vec f(\vec u|\hatu) \Big) \bigg] \d x \d t \\
      - \veps\langle \vec r_2 , \Phi \D^2 \eta(\hatu) (\vec u - \hatu)\rangle
      + \int_\W \Phi(0,\cdot) \eta(\vec u_0| \hatu(0,\cdot))\d x
  \end{multline}

\end{proof}

\begin{remark}[Complications in the regularity assumption on $\vec r_2$]
  For $\veps > 0$, the well-definedness of 
  $\Transpose{\vec r_2} \D^2 \eta(\hatu)(\vec u - \hatu)$ requires careful consideration 
  for general systems. 
  When $\D^2 \eta$ is not diagonal and $\vec A$ lacks full rank, 
  certain components of $\vec r_2$ belong only to $\leb{2}(0,T; \sobh{-1}(\W))$, 
  while certain components of $\vec u$ are merely in $\leb{\infty}((0,T) \times \W)$, 
  which may lead to insufficient regularity in their product.
  However, in all examples considered below, either $\D^2 \eta$ is diagonal, 
  $\vec A$ has full rank, or both, ensuring this issue does not arise.
\end{remark}

In practical applications, Lemma \ref{lem:greb} simplifies significantly when the following identities hold: 
\begin{equation*}
  \begin{aligned}
    (\D \eta(\vec a) - \D \eta(\vec b)) \vec A(\vec b)  &=
    (\D^2 \eta(\vec b) (\vec a - \vec b))\cdot \vec A(\vec b),\\
    \partial_x (\D \eta(\vec a) - \D \eta(\vec b)) \vec A(\vec b)  &=
    \partial_x(\D^2 \eta(\vec b) (\vec a - \vec b))\cdot \vec A(\vec b)
  \end{aligned}
\end{equation*}
for all $\vec a, \vec b \in U$. Thus Lemma \ref{lem:greb} implies
\begin{multline*}
  0 \leq \int_0^T \int_\W \bigg[ \partial_t \Phi \eta(\vec u| \hatu) 
    + \partial_x \Phi \Big( q(\vec u|\hatu) \\
    - \veps (\D \eta(\vec u) - \D \eta(\hatu)) ( \vec A(\vec u) \partial_x \vec u
    - \vec A(\hatu) \partial_x \hatu) \Big) \\
    + \Phi \Big( - \veps \partial_x(\D \eta(\vec u) - \D \eta(\hatu)) 
    ( \vec A(\vec u) \partial_x \vec u  - \vec A(\hatu) \partial_x \hatu) \\
    - \vec r_1 \cdot \D^2 \eta(\hatu) (\vec u - \hatu)
    - \partial_x(\D \eta(\hatu))\vec f(\vec u|\hatu) \Big) \bigg] \d x \d t \\
    - \veps\langle \vec r_2 , \Phi \D^2 \eta(\hatu) (\vec u - \hatu)\rangle
    + \int_\W \Phi(0,\cdot) \eta(\vec u_0| \hatu(0,\cdot))\d x.
\end{multline*}

We now establish a technical lemma for boundary terms that arise throughout our analysis.
The proof is provided in Appendix \ref{app:boundary-lemma}.

\begin{lemma}[Boundary term estimates]
  \label{lem:boundary-est}
  Let $\zeta \in \leb{2}(0,T; \sobh{1}_0(\W))$ and $\xi \in \leb{2}((0,T) \times \W)$. Then for any $0<s<T$,
  \begin{equation}\label{boundarylemma}
    \lim_{\delta \rightarrow 0} \left| \frac 1 \delta \int_0^s \int_0^\delta  \zeta  \xi \d x \d t\right| =0
    \quad \text{and} \quad
    \lim_{\delta \rightarrow 0} \left| \int_0^s \frac{1}{\delta^2} \int_0^\delta \zeta^2 \d x \d t \right| =0.
  \end{equation}
\end{lemma}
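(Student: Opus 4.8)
The plan is to exploit the fact that, for almost every $t\in(0,T)$, the slice $\zeta(t,\cdot)$ lies in $\sobh{1}_0(\W)$ and therefore has vanishing trace at the endpoint $x=0$. This yields a Poincar\'e-type inequality on the shrinking interval $(0,\delta)$ whose constant vanishes with $\delta$: writing $\zeta(t,x)=\int_0^x\partial_x\zeta(t,y)\d y$ and applying Cauchy--Schwarz gives $|\zeta(t,x)|^2\le x\int_0^\delta|\partial_x\zeta(t,y)|^2\d y$ for $0<x<\delta$, so integrating over $x\in(0,\delta)$ and then over $t\in(0,s)$ produces
\begin{equation}\nonumber
  \int_0^s\int_0^\delta |\zeta|^2\d x\d t \;\le\; \frac{\delta^2}{2}\int_0^s\int_0^\delta |\partial_x\zeta|^2\d x\d t .
\end{equation}
This single estimate drives everything.

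For the second claimed limit, dividing by $\delta^2$ cancels the prefactor, leaving
\begin{equation}\nonumber
  \int_0^s\frac1{\delta^2}\int_0^\delta \zeta^2\d x\d t \;\le\; \frac12\int_0^s\int_0^\delta |\partial_x\zeta|^2\d x\d t .
\end{equation}
Since $\partial_x\zeta\in\leb{2}((0,T)\times\W)$ and the rectangles $(0,s)\times(0,\delta)$ decrease to a Lebesgue-null set as $\delta\downarrow 0$, absolute continuity of the Lebesgue integral (equivalently, dominated convergence with dominating function $|\partial_x\zeta|^2$ on $(0,s)\times\W$) forces the right-hand side to $0$, which gives the second assertion.

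For the first limit I would apply Cauchy--Schwarz in the $(t,x)$ variables over $(0,s)\times(0,\delta)$ and then insert the Poincar\'e bound above:
\begin{equation}\nonumber
\begin{aligned}
  \left|\frac1\delta\int_0^s\int_0^\delta\zeta\xi\d x\d t\right|
  &\le \frac1\delta\Big(\int_0^s\int_0^\delta\zeta^2\d x\d t\Big)^{1/2}\Big(\int_0^s\int_0^\delta\xi^2\d x\d t\Big)^{1/2}\\
  &\le \frac1{\sqrt2}\Big(\int_0^s\int_0^\delta|\partial_x\zeta|^2\d x\d t\Big)^{1/2}\Big(\int_0^s\int_0^\delta\xi^2\d x\d t\Big)^{1/2}.
\end{aligned}
\end{equation}
The second factor is bounded by $\Norm{\xi}_{\leb{2}((0,T)\times\W)}$ (and in fact also tends to $0$), while the first factor tends to $0$ by the same absolute-continuity argument as above; hence the product vanishes in the limit.

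The only genuine subtlety — the ``main obstacle'', such as it is — is making the slice-wise trace argument rigorous: one works with the $\sobh{1}_0$ representative of $\zeta(t,\cdot)$ for a.e.\ $t$, uses that $t\mapsto\zeta(t,\cdot)$ is $\leb{2}$-valued into $\sobh{1}_0(\W)$ so that $\partial_x\zeta\in\leb{2}((0,T)\times\W)$ and the fundamental theorem of calculus applies for a.e.\ $t$, and checks measurability of $t\mapsto\int_0^\delta|\partial_x\zeta(t,\cdot)|^2\d x$ before integrating in $t$. Once the scaled Poincar\'e inequality on $(0,\delta)$ is established, both limits are immediate from dominated convergence and Cauchy--Schwarz, with no residual $\delta$-dependence surviving the explicit $\delta^2/2$ gain; note in particular that this is precisely where the hypothesis $\zeta\in\leb{2}(0,T;\sobh{1}_0(\W))$ rather than merely $\leb{2}(0,T;\sobh{1}(\W))$ is used.
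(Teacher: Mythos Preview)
Your proof is correct and follows essentially the same approach as the paper: both use the vanishing trace of $\zeta(t,\cdot)$ at $x=0$ to write $\zeta(t,x)=\int_0^x\partial_x\zeta(t,y)\d y$, derive the scaled Poincar\'e estimate $\int_0^\delta\zeta^2\d x\le\tfrac{\delta^2}{2}\int_0^\delta|\partial_x\zeta|^2\d x$, and then conclude both limits by absolute continuity of the Lebesgue integral. The only cosmetic difference is that for the first limit you apply Cauchy--Schwarz jointly in $(t,x)$ whereas the paper applies it in $x$ for each fixed $t$ and then integrates in $t$; both yield the same $1/\sqrt{2}$ prefactor and the same conclusion.
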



\section{Linear scalar problems}

\label{sec:ls}

In this section, we consider the scalar case $m=1$ with linear flux $f(u) = b u$ for constant $b \in \reals$ and diffusion $A(u) \equiv 1$.
Thus, equations \eqref{eq:cd} and \eqref{eq:cdr} become
\begin{equation}\label{eq:cdsl}
  \partial_t u + b \partial_x u = \veps \del_{xx} u
\end{equation}
and
\begin{equation}\label{eq:cdrsl}
  \partial_t \widehat u + b \partial_x \widehat u = \veps \del_{xx} \widehat u + r.
\end{equation}
We assume a splitting of the residual $r = r_1 + \veps r_2$ as in \eqref{eq:cdr}, with $r_1 \in \leb{1}(0,T; \leb{2}(\W))$ and $r_2 \in \leb{2}(0,T; \sobh{-1}(\W))$.

Lemma \ref{lem:greb} yields stability estimates for both the viscous ($\veps>0$) and inviscid ($\veps=0$) cases. Despite differing boundary conditions and solution regularity, the proofs follow similar techniques.

\begin{remark}[Regularity of entropy solutions]
  \label{rem:reg}
  For $\veps=0$, entropy solutions require $u \in \leb{\infty}((0,T) \times \W)$, 
  while for $\veps >0$ they additionally require $u \in \leb{2}(0,T;\sobh{1}(\W))$, 
  implying $u \in \sobh{1}(0,T;\sobh{-1}(\W))$.
\end{remark}

\begin{remark}[Boundary conditions: Viscous vs. inviscid case]
  \label{rem:bcsr}
  For $\veps >0$, equation \eqref{eq:cdsl} requires initial data on $\{0\} \times \W$
  and boundary data on $(0,T) \times \partial \W$. 
  For $\veps=0$, boundary data applies only at the inflow boundary:
  the left boundary when $b>0$, the right when $b<0$. 
  We assume Lipschitz-continuous boundary data in time, 
  compatible with the regularity in Remark \ref{rem:reg}.
\end{remark}

\begin{remark}[Entropy]\label{rem:ent-sl}
  For any $\eta \in \cont{2}(\reals, \reals)$, we may define an entropy flux 
  $q(u):=b \eta(u)$. The quadratic entropy
  $\eta(u) = \frac{1}{2} u^2$
  yields the relative entropy/entropy-flux pair
  \begin{equation}\label{def:resl}
    \eta(u|\widehat u) = \frac{1}{2} (u - \widehat u)^2,\quad
    q(u|\widehat u) = \frac{b}{2} (u - \widehat u)^2.
  \end{equation}
  This pair provides stronger estimates than other entropy/entropy-flux pairs.
  Unique entropy solutions in the sense of Kruzhkov \cite{Kru70} satisfy \eqref{eq:eniq} 
  for all convex entropy, including this one.
\end{remark}

\begin{theorem}[Relative entropy bound for linear advection-diffusion equation]
  \label{the:sl}
  For $\veps>0$, suppose $u \in \leb{2}(0,T;\sobh{1}(\W))$ is an entropy solution of \eqref{eq:cdsl} 
  for the entropy/entropy-flux pair $\eta(u)=\frac{1}{2} u^2$ and $q(u) = \frac{b}{2} u^2$
  with initial data $u_0 \in \sobh{1}(\W)$ and prescribed
  Dirichlet boundary data $g_D \in \sob{1}{\infty}((0,T) \times \partial \W)$. 
  Let $\widehat u \in \sob{1}{\infty}((0,T)\times \W)$ be a solution to \eqref{eq:cdrsl} 
  with initial data $\widehat u_0 \in \sob{1}{\infty}(\W)$ and the same boundary data.
  Then for almost all $t \in (0,T)$, we have
  \begin{multline*}
    \frac 14 \Norm{u - \widehat u}_{\leb{\infty}(0,t; \leb{2}( \W))}^2
    +
    \frac \veps2\Norm{\partial_x (u - \widehat u)}_{\leb{2}((0,t)\times \W)}^2 \\
    \leq
    \Norm{u_0- \widehat u_0}_{\leb{2}(\W)}^2     
    +
    4\Norm{ r_1}_{\leb{1}(0,t; \leb{2}( \W))}^2
    +
    \veps \Norm{r_2}_{\leb{2}(0,t; \sobh{-1}(\W))}^2
  \end{multline*}
\end{theorem}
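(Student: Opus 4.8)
\emph{Approach.} The estimate follows by specialising the relative entropy inequality of Lemma \ref{lem:greb} to this scalar, linear, constant-coefficient setting and then running a Gr\"onwall-free energy argument for the error $e := u - \widehat u$. For the quadratic entropy $\eta(u)=\tfrac12 u^2$ one has $\D\eta(u)=u$ and $\D^2\eta(u)\equiv 1$; since $A\equiv 1$ the two simplifying identities displayed after Lemma \ref{lem:greb} hold trivially, and since $f(u)=bu$ the flux defect vanishes, $f(u|\widehat u)=bu-b\widehat u-b(u-\widehat u)=0$. Substituting these into the simplified form of Lemma \ref{lem:greb} gives, for every admissible $\Phi\in\sob{1}{\infty}_0([0,T)\times\W,\reals_+)$,
\begin{multline*}
  0 \leq \int_0^T\!\!\int_\W \Big[ \partial_t\Phi\,\tfrac12 e^2 + \partial_x\Phi\,\big(\tfrac b2 e^2 - \veps\,e\,\partial_x e\big) - \veps\,\Phi\,(\partial_x e)^2 - \Phi\, r_1 e\Big]\d x\d t\\
  - \veps\langle r_2,\Phi e\rangle + \int_\W \Phi(0,\cdot)\,\tfrac12 (u_0-\widehat u_0)^2\d x .
\end{multline*}
(The weaker hypothesis $r_1\in\leb{1}(0,T;\leb{2}(\W))$ of the theorem suffices, since $r_1$ enters only through the integrable product $r_1 e$.) Two regularity remarks make the manipulations below rigorous: since $u$ and $\widehat u$ carry the same Dirichlet data, $e(t,\cdot)\in\sobh{1}_0(\W)$ for a.e.\ $t$; and reading off $\partial_t e$ from \eqref{eq:cdsl}--\eqref{eq:cdrsl} shows $\partial_t e\in\leb{2}(0,T;\sobh{-1}(\W))$, so $e\in C([0,T];\leb{2}(\W))$ and $t\mapsto\|e(t)\|_{\leb{2}(\W)}^2$ is continuous.

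\emph{Test function and passage to the limit.} Because an admissible $\Phi$ must vanish on $[0,T)\times\partial\W$, a purely temporal cutoff is not allowed; instead take $\Phi(t,x)=\theta_\delta(t)\psi_\kappa(x)$, where $\theta_\delta$ is the Lipschitz temporal cutoff equal to $1$ on $[0,s]$, affine decreasing on $[s,s+\delta]$ and then $0$, and $\psi_\kappa$ equals $1$ on $(\kappa,1-\kappa)$, is affine on the two boundary strips of width $\kappa$, and vanishes on $\partial\W$. Sending $\delta\to 0$ turns the $\partial_t\Phi$ term into $-\tfrac12\int_\W\psi_\kappa e(s)^2$ (using continuity of $t\mapsto\|e(t)\|_{\leb{2}}^2$) and, by bounded convergence, leaves the remaining terms integrated over $(0,s)$; the $\partial_x\Phi$ contribution becomes $\int_0^s\!\int_\W\psi_\kappa'(\tfrac b2 e^2-\veps e\partial_x e)$, supported on the two strips where $\psi_\kappa'=\pm\kappa^{-1}$. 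Then letting $\kappa\to 0$: this strip term vanishes by Lemma \ref{lem:boundary-est} (using $e\in\leb{2}(0,T;\sobh{1}_0(\W))$ and $\veps\partial_x e\in\leb{2}((0,T)\times\W)$), whose second estimate moreover gives $\psi_\kappa'e\to 0$ in $\leb{2}(0,s;\leb{2}(\W))$, hence $\psi_\kappa e\to e$ in $\leb{2}(0,s;\sobh{1}_0(\W))$ and $\langle r_2,\psi_\kappa e\rangle\to\langle r_2,e\rangle$; the rest converges by dominated convergence. (The $b$-dependent term disappears entirely, which is why the final bound is independent of $b$.) The outcome is the energy inequality
\begin{equation*}
  \tfrac12\|e(s)\|_{\leb{2}(\W)}^2 + \veps\!\int_0^s\!\|\partial_x e\|_{\leb{2}(\W)}^2\d\tau
  \leq \tfrac12\|u_0-\widehat u_0\|_{\leb{2}(\W)}^2 - \int_0^s\!\!\int_\W r_1 e\,\d x\d\tau - \veps\!\int_0^s\!\langle r_2,e\rangle\d\tau ,
\end{equation*}
valid for a.e.\ (hence, by continuity, every) $s\in(0,t)$.

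\emph{Closing the estimate.} Write $M:=\|e\|_{\leb{\infty}(0,t;\leb{2}(\W))}$ and $D^2:=\int_0^t\|\partial_x e\|_{\leb{2}(\W)}^2\d\tau$. Hölder's inequality gives $\big|\int_0^s\!\int_\W r_1 e\,\d x\d\tau\big|\leq M\,\|r_1\|_{\leb{1}(0,t;\leb{2}(\W))}$, while by duality (using $\|e\|_{\sobh{1}_0}=\|\partial_x e\|_{\leb{2}}$ in one dimension), Cauchy--Schwarz in time and Young's inequality, $\veps\big|\int_0^s\langle r_2,e\rangle\d\tau\big|\leq\tfrac\veps2\int_0^s\|\partial_x e\|_{\leb{2}}^2\d\tau + \tfrac\veps2\|r_2\|_{\leb{2}(0,t;\sobh{-1}(\W))}^2$, the first term being absorbed on the left. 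Thus, for a.e.\ $s\in(0,t)$,
\begin{equation*}
  \tfrac12\|e(s)\|_{\leb{2}(\W)}^2 + \tfrac\veps2\!\int_0^s\!\|\partial_x e\|_{\leb{2}(\W)}^2\d\tau
  \leq \tfrac12\|u_0-\widehat u_0\|_{\leb{2}(\W)}^2 + M\|r_1\|_{\leb{1}(0,t;\leb{2}(\W))} + \tfrac\veps2\|r_2\|_{\leb{2}(0,t;\sobh{-1}(\W))}^2 .
\end{equation*}
Evaluating this at a time where $\|e(\cdot)\|_{\leb{2}}$ essentially attains $M$, and separately letting $s\uparrow t$, then adding the two resulting inequalities, yields $\tfrac12 M^2+\tfrac\veps2 D^2$ on the left against twice the right-hand side; a final Young step $2M\|r_1\|_{\leb{1}(0,t;\leb{2})}\leq\tfrac14 M^2 + 4\|r_1\|_{\leb{1}(0,t;\leb{2})}^2$ absorbs the $M$-term and leaves precisely the claimed inequality.

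\emph{Main obstacle.} The only genuine subtlety is the boundary: since Lemma \ref{lem:greb} admits only test functions vanishing on $\partial\W$ (because the test field $\vec\phi=\Phi\D\eta(\widehat u)$ must vanish there), one is forced to use spatial cutoffs and then show the cutoff-derivative terms are asymptotically negligible — this is exactly the role of Lemma \ref{lem:boundary-est}, and it relies on $e$ having zero trace, i.e.\ on $u$ and $\widehat u$ sharing the same Dirichlet data. A secondary point is that $r_2$ lies only in $\leb{2}(0,T;\sobh{-1}(\W))$, so the pairing $\langle r_2,e\rangle$ must be controlled against the parabolic dissipation $\veps\|\partial_x e\|_{\leb{2}}^2$ rather than an $\leb{2}$ bound, which is why this contribution necessarily carries a factor $\veps$.
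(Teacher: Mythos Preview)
Your proof is correct and follows essentially the same route as the paper: specialise Lemma~\ref{lem:greb} to the quadratic entropy (where $f(u|\widehat u)=0$), localise with a space--time cutoff, invoke Lemma~\ref{lem:boundary-est} to kill the boundary-strip contributions and to pass to the limit in the $r_2$ pairing, and close with Young's inequality. The only cosmetic differences are that the paper uses a single parameter $\delta$ for both the spatial and temporal cutoffs (your decoupled $\delta,\kappa$ limits are equally valid and arguably cleaner), and that the paper applies Young's inequality to the $r_1e$ term \emph{before} taking the supremum in $s$, whereas you keep $M\|r_1\|_{\leb{1}(\leb{2})}$ linear and apply Young only after summing the two bounds; both orderings land on the same constants.
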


\begin{remark}[Compatibility conditions]
  Theorem \ref{the:sl} implicitly requires compatibility conditions between initial data $\hat{u}_0$ and boundary data $g_D$.
\end{remark}

\begin{proof}[Proof of Theorem \ref{the:sl}]
  Let $e := u - \widehat u$ denote the error. Since
  \begin{equation}\nonumber
      \D \eta(u) - \D \eta (\widehat u) = \D^2 \eta(\widehat u)( u- \widehat u) = e \quad \text{and} \quad
      f(u) - f(\widehat u) - \D f(\widehat u) (u- \widehat u) =0,
  \end{equation}
  Lemma \ref{lem:greb} with \eqref{def:resl} reduces to
  \begin{multline}\label{slreb_hyp}
      0 \leq  \int_0^T \int_\W
      \partial_t \Phi \frac 1 2 e^2 + \partial_x \Phi \left[\frac b 2 e^2
      -\veps e  \partial_x  e
      \right]\\
      + \Phi \left[ - \veps   (\partial_x  e )^2
      -  r_1 e
      \right]\d x \d t
      - \veps\langle r_2 , \Phi  e \rangle
      + \int_\W
      \Phi(0,\cdot)\frac 12 e(0,\cdot)^2 \d x.
  \end{multline} 
  
  Adapting \cite[Eqns (5.3.11) and (5.3.12)]{Daf10} to bounded domains, 
  we take test functions with $0 < s < T$ and $\delta>0$:
  \begin{equation}\nonumber
    \Phi_\delta(x,t):= \psi_\delta(x) \zeta_\delta(t)
  \end{equation}
  with
  \begin{equation}\label{def:psidelta}
    \begin{split}
      & \psi_\delta(x)= \left\{ \begin{array}{ccc}
        \frac x \delta &:& 0 \leq x < \delta\\
        1  &:& \delta \leq x < 1- \delta\\
        \frac {1-x} \delta &:& 1- \delta \leq x \leq 1\\
      \end{array},\right. \qquad
      \zeta_\delta(t)= \left\{ \begin{array}{ccc}
        1  &:& t< s\\
        1- \frac {t-s} \delta &:& s \leq t \leq s+ \delta\\
        0 &:& t > s+ \delta
      \end{array}\right.,      
    \end{split}
  \end{equation}
  
  Substituting $\Phi_\delta$ into \eqref{slreb_hyp}, we obtain
  \begin{multline}\label{slreb}
      0 \leq  - \frac{1}{\delta}\int_s^{s+\delta} \int_\W
      \frac 12 e^2  \d x \d t
      + \frac 1 \delta \int_0^s \int_0^\delta \frac b2 e^2 -\veps e ( \partial_x  e)\d x \d t\\
      - \frac 1 \delta \int_0^s \int_{1-\delta}^1 \frac b2e^2 -\veps e  (\partial_x e)\d x \d t
      + \int_0^s \int_\W \Big[  - \veps  ( \partial_x  e)^2
      - r_1 e
      \Big]\d x \d t\\
      - \veps\langle r_2 , \Phi_\delta  e\rangle
      + \int_\W 
      \frac 12 e(0,\cdot)^2 \d x  + \mathcal{O}(\delta)
  \end{multline}
  where $\mathcal{O}(\delta)$ represents integrals over $\delta^2$ regions with $\delta^{-1}$ scaling.
  We claim that the boundary layer integrals vanish as $\delta \searrow 0$:
  \begin{align}\label{eq:claims}
      & \lim_{\delta \searrow 0} \frac 1 \delta \int_0^s \int_0^\delta \frac b2 e^2 -\veps e  \partial_x  e\d x \d t=0,\nonumber \\
      & \lim_{\delta \searrow 0} \frac 1 \delta \int_0^s \int_{1-\delta}^1 \frac b2 e^2 -\veps e  \partial_x  e\d x \d t=0,\nonumber\\
      & \lim_{\delta \searrow 0} \langle r_2 , \Phi_\delta  (u - \widehat u)\rangle  \leq \Norm{r_2}_{\leb{2}(0,s;\sobh{-1}(\W))} \Norm{\partial_x e}_{\leb{2}((0,s)\times \W)}.
  \end{align}
  To prove \eqref{eq:claims}$_1$, we apply the first part of \eqref{boundarylemma} with $\zeta=e$ and $\xi= \partial_x e$.
  Note that since both solutions satisfy the same boundary conditions, we have $e = u - \widehat u \in \leb{2}(0,T; \sobh{1}_0(\W))$.
  The proof of \eqref{eq:claims}$_2$ follows by the same argument applied to the right boundary.
  To establish \eqref{eq:claims}$_3$, we first observe that
  \begin{equation}\nonumber
    \lim_{\delta \searrow 0}   \Norm{\partial_x ( \Phi_\delta e)}_{\leb{2}((0,T)\times \W)} \leq  \Norm{\partial_x e}_{\leb{2}((0,s)\times \W)}.
  \end{equation}
  By the triangle inequality, we decompose
  \begin{equation}\nonumber
    \Norm{\partial_x ( \Phi_\delta e)}_{\leb{2}((0,T)\times \W)}
  \leq 
  \Norm{\partial_x \Phi_\delta e}_{\leb{2}((0,T)\times \W)} + 
  \Norm{\Phi_\delta \partial_x e}_{\leb{2}((0,T)\times \W)}.
  \end{equation}
  For the second term, we have
  \begin{equation*}
    \lim_{\delta \searrow 0}  \Norm{\Phi_\delta \partial_x e}_{\leb{2}((0,T)\times \W)} \leq  \Norm{\partial_x e}_{\leb{2}((0,s)\times \W)}.
  \end{equation*}
  For the first term, we need to show $\lim_{\delta \to 0}\Norm{\partial_x \Phi_\delta e}_{\leb{2}((0,T)\times \W)} = 0$. Computing the square of this norm yields
  \begin{equation}\label{eq:claim31} \Norm{\partial_x \Phi_\delta e}_{\leb{2}((0,T)\times \W)}^2 
    = \int_0^s \frac{1}{\delta^2} \int_0^\delta e^2 \d x \d t + \int_0^s \frac{1}{\delta^2} \int_{1-\delta}^1 e^2 \d x \d t .
  \end{equation}
  Applying the second part of \eqref{boundarylemma} to the first integral,
  \begin{equation*}
    \left| \int_0^s \frac{1}{\delta^2} \int_0^\delta e^2 \d x \d t \right|  \stackrel{\delta \rightarrow 0}{\longrightarrow} 0.
  \end{equation*}
  The second integral in \eqref{eq:claim31} vanishes by the same argument applied to the right boundary. This completes the proof of \eqref{eq:claims}$_3$.
  
  Using \eqref{eq:claims} in \eqref{slreb} gives, for almost every $s \in (0,T)$,
  \begin{multline}\label{slreb_teval}
      0 \leq  - 
      \int_\W \frac 12 e(s,\cdot)^2  \d x
      + \int_0^s \int_\W \Big[  - \veps  ( \partial_x  e)^2
      -  r_1 e
      \Big]\d x \d t
      \\ 
      + \veps\Norm{r_2}_{\leb{2}(0,s;\sobh{-1}(\W))} \Norm{\partial_x e}_{\leb{2}((0,s)\times \W)} 
      + \int_\W 
      \frac 12 e(0,\cdot)^2 \d x.
  \end{multline}
  After subtracting $\frac{\veps}{2} \Norm{\partial_x e}_{\leb{2}((0,t)\times \W)}^2$ from both sides, we obtain
  \begin{multline}\label{slreb-int}
      \frac 12 \Norm{e(t)}_{\leb{2}(  \W)}^2
      +
      \frac{\veps}{2} \Norm{\partial_x e}_{\leb{2}((0,t)\times \W)}^2 
      \leq
      \frac{1}{2} \Norm{e(0)}_{\leb{2}(\W)}^2     
      +
      2\Norm{ r_1}_{\leb{1}(0,t; \leb{2}( \W))}^2
      \\
      +
      \frac{1}{8} \Norm{ e}_{\leb{\infty}(0,t; \leb{2}( \W))}^2
      +
      \frac \veps 2 \Norm{r_2}_{\leb{2}(0,t; \sobh{-1}(\W))}^2.
  \end{multline}
  Splitting \eqref{slreb-int} into separate estimates for $\Norm{e(t)}_{\leb{2}(\W)}^2$ and 
  $\veps\Norm{\partial_x e}_{\leb{2}((0,t)\times \W)}^2$, and using the fact that the right-hand side is non-decreasing in $t$, we obtain
  \begin{multline}\label{slreb-int2}
      \frac 12 \Norm{e}_{\leb{\infty}(0,t; \leb{2}( \W))}^2
      +
      \frac{\veps}{2}\Norm{\partial_x e}_{\leb{2}((0,t)\times \W)}^2
      \leq
      \Norm{e(0)}_{\leb{2}(\W)}^2     
      +
      4\Norm{ r_1}_{\leb{1}(0,t; \leb{2}( \W))}^2
      \\
      +
      \frac{1}{4} \Norm{ e}_{\leb{\infty}(0,t; \leb{2}( \W))}^2
      +
      \veps \Norm{r_2}_{\leb{2}(0,t; \sobh{-1}(\W))}^2.
  \end{multline}
  Rearranging \eqref{slreb-int2} yields
  \begin{multline}\label{t2_1}
      \frac 14 \Norm{e}_{\leb{\infty}(0,t; \leb{2}( \W))}^2
      +
      \frac \veps2\Norm{\partial_x e}_{\leb{2}((0,t)\times \W)}^2
      \leq
      \Norm{e(0)}_{\leb{2}(\W)}^2
      +
      4\Norm{ r_1}_{\leb{1}(0,t; \leb{2}( \W))}^2
      +
      \veps \Norm{r_2}_{\leb{2}(0,t; \sobh{-1}(\W))}^2.
  \end{multline}
  
\end{proof}

\begin{theorem}[Relative entropy bound for the inviscid case]
  \label{cor:sl}
  For the inviscid case $\veps=0$, suppose $u \in \leb{\infty}((0,T) \times \W)$
  is an entropy solution of \eqref{eq:cdsl} for
  $\eta(u)=\frac{1}{2} u^2$ and $q(u) = \frac{b}{2} u^2$, satisfying
  \eqref{eq:eniq}, with initial data $u_0 \in \leb{\infty}(\W)$ and
  Lipschitz-continuous Dirichlet boundary data $g_D$ prescribed on the inflow boundary.  
  Let $\widehat u \in \sob{1}{\infty}((0,T) \times \W)$ be a solution to \eqref{eq:cdrsl} 
  with initial data $\widehat u_0 \in \sob{1}{\infty}(\W)$ and the same boundary data $g_D$. 
  Then for almost all $t \in (0,T)$, the following estimate holds:
  \begin{equation}\label{eq:resl}
    \frac{1}{4}\Norm{u - \widehat u}_{\leb{\infty}(0,t;\leb{2}(\W))}^2
    \leq
    \Norm{u_0 - \widehat u_0}_{\leb{2}(\W)}^2
    + \Norm{r_1}_{\leb{1}(0,t; \leb{2}(\W))}^2.
  \end{equation}
\end{theorem}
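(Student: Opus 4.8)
The plan is to follow the argument of Theorem~\ref{the:sl}, which simplifies markedly when $\veps=0$: the dissipative term $\Norm{\partial_x e}$ disappears entirely, and since $r=r_1+\veps r_2=r_1$ the $\sobh{-1}$ piece $r_2$ never enters. First I would set $e:=u-\widehat u$ and note, exactly as in the viscous case, that the quadratic entropy gives $\D\eta(u)-\D\eta(\widehat u)=\D^2\eta(\widehat u)(u-\widehat u)=e$ while the linear flux gives $f(u|\widehat u)=0$; substituting these and \eqref{def:resl} into Lemma~\ref{lem:greb} and setting $\veps=0$ annihilates every $\veps$-dependent term (in particular the duality pairing with $r_2$), so that the relative entropy inequality collapses to \eqref{slreb_hyp} with $\veps=0$, namely
\begin{equation*}
0 \le \int_0^T\!\!\int_\W \partial_t\Phi\,\tfrac12 e^2 + \partial_x\Phi\,\tfrac b2 e^2 - \Phi\, r_1 e\,\d x\,\d t + \int_\W \Phi(0,\cdot)\,\tfrac12 e(0,\cdot)^2\,\d x
\end{equation*}
for all admissible $\Phi\in\sob{1}{\infty}_0([0,T)\times\W,\reals_+)$.

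Next I would insert the localising test function $\Phi_\delta=\psi_\delta(x)\zeta_\delta(t)$ from \eqref{def:psidelta} and let $\delta\searrow 0$. The temporal term converges to $-\tfrac12\Norm{e(s)}_{\leb{2}(\W)}^2$ for a.e.\ $s$ (using $e\in C([0,T];\leb{2}(\W))$, since $u$ is just the transported datum and $\widehat u$ is Lipschitz), the initial term to $\tfrac12\Norm{e(0)}_{\leb{2}(\W)}^2$, and, because $r_1 e\in\leb{1}((0,T)\times\W)$, dominated convergence turns the residual term into $-\int_0^s\!\int_\W r_1 e$. The spatial flux produces the two boundary layers $\tfrac1\delta\int_0^s\!\int_0^\delta \tfrac b2 e^2$ and $-\tfrac1\delta\int_0^s\!\int_{1-\delta}^1 \tfrac b2 e^2$, plus an $\mathcal O(\delta)$ cross term.

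Handling these boundary layers is the step I expect to be the main obstacle, and is the genuine point of departure from the viscous proof. Taking $b>0$ without loss of generality (so $x=0$ is the inflow, $x=1$ the outflow), the outflow layer $-\tfrac1\delta\int_0^s\!\int_{1-\delta}^1\tfrac b2 e^2$ is nonpositive, hence has the favourable sign and may simply be discarded; the inflow layer $\tfrac1\delta\int_0^s\!\int_0^\delta\tfrac b2 e^2$ is nonnegative and must be shown to vanish. Here one cannot invoke Lemma~\ref{lem:boundary-est} directly, because in the inviscid regime $u$ (hence $e$) lies only in $\leb{\infty}$ rather than $\leb{2}(0,T;\sobh{1})$; instead I would use that $u$ and $\widehat u$ attain the same Dirichlet datum $g_D$ on the inflow boundary, so $e$ has vanishing inflow trace — a meaningful statement for the linear transport equation, whose entropy solution is the transported initial/boundary datum — whence $\tfrac1\delta\int_0^\delta e(t,x)^2\,\d x\to0$ for a.e.\ $t$ and the layer vanishes by dominated convergence. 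The cases $b<0$ (roles of the endpoints swapped) and $b=0$ (trivial) are analogous.

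Combining these limits gives, for a.e.\ $s\in(0,T)$,
\begin{equation*}
\tfrac12\Norm{e(s)}_{\leb{2}(\W)}^2 \le \tfrac12\Norm{e(0)}_{\leb{2}(\W)}^2 + \int_0^s \Norm{r_1(t)}_{\leb{2}(\W)}\Norm{e(t)}_{\leb{2}(\W)}\,\d t ,
\end{equation*}
and I would close as in Theorem~\ref{the:sl}: bound $\Norm{e(t)}_{\leb{2}(\W)}$ by $\Norm{e}_{\leb{\infty}(0,t;\leb{2}(\W))}$ inside the integral, take the supremum over $s\in(0,t)$, and use Young's inequality $ab\le\tfrac14 a^2+b^2$ to absorb the term $\tfrac14\Norm{e}_{\leb{\infty}(0,t;\leb{2}(\W))}^2$ into the left-hand side, which yields exactly \eqref{eq:resl}.
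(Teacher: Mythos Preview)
Your proposal is correct and follows essentially the same route as the paper's proof: both start from Lemma~\ref{lem:greb} with $\veps=0$, insert the cutoff $\Phi_\delta$, discard the outflow boundary layer by sign, show the inflow layer vanishes using that $e$ is Lipschitz with zero trace in the characteristic region $\{x<bt\}$ (since the linear transport solution there is the transported Lipschitz datum $g_D$), and close via Young's inequality exactly as in Theorem~\ref{the:sl}. The only cosmetic difference is that the paper handles the inflow layer by explicitly splitting the time integral at $t=\delta/b$ and bounding the corner piece by $\tfrac{\delta}{2}\Norm{e}_{\leb{\infty}}^2$ and the remainder by the Lipschitz estimate $|e|\le Lx$, whereas you package the same facts through dominated convergence with the $\leb{\infty}$ bound as majorant.
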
	

\begin{proof}
  By Lemma \ref{lem:greb} with the quadratic entropy, we obtain
  \begin{equation}\label{slreb_0}
    0 \leq  \int_0^T \int_\W 
    \partial_t \Phi \frac 1 2 e^2  + \partial_x \Phi \frac b 2 e^2 
    - \Phi  r_1 e
    \d x \d t
    + \int_\W 
    \Phi(0,\cdot) \frac 1 2 e(0,\cdot)^2 \d x
  \end{equation}
  with $e:= u - \widehat u$.
  By using $\Phi_\delta$ as defined in \eqref{def:psidelta}, we obtain
  \begin{multline}\label{slreb_1}
      0 \leq  - \frac{1}{\delta}\int_s^{s+\delta} \int_\W
      \frac 12 e^2  \d x \d t
      + \frac 1 \delta \int_0^s \int_0^\delta \frac b2 e^2 \d x \d t\\
      - \frac 1 \delta \int_0^s \int_{1-\delta}^1 \frac b2 e^2 \d x \d t
      - \int_0^s \int_\W 
      r_1 e
      \d x \d t
      + \int_\W 
      \frac 12 e(0,\cdot)^2 \d x + \mathcal{O}(\delta)
  \end{multline}
  where $\mathcal{O}(\delta)$ denotes integrals over $\delta^2$ regions with $\delta^{-1}$ scaling.
  
  For the boundary layer integrals in \eqref{slreb_1} with $b>0$ (the case $b<0$ is analogous),
  the integral $\frac 1 \delta \int_0^s \int_{1-\delta}^1 \frac b2 e^2 \d x \d t$ is non-negative, and
  \begin{multline}\label{eq:hypboundary}
    \frac 1 \delta \int_0^s \int_0^\delta \frac b2 e^2 \d x \d t = \frac 1 \delta \int_0^{\frac{\delta}{b}} \int_0^\delta \frac b2 e^2 \d x \d t + \frac 1 \delta \int_{\frac{\delta}{b}}^s \int_0^\delta \frac b2 e^2 \d x \d t\\
    \leq \frac{\delta}{2} \Norm{e}_{\leb{\infty}}^2 + \frac 1 \delta \int_{\frac{\delta}{b}}^s \int_0^\delta \frac{b}{2} L^2 x^2 \d x \d t \stackrel{\delta \rightarrow 0}{\longrightarrow} 0,
  \end{multline}
  using the $\leb{\infty}$-bound on $u$, which in turn implies an $\leb{\infty}$-bound on $e$.
  Lipschitz continuity of the boundary data ensures that $u$, and thus $e$, are Lipschitz continuous within the region $\{(x,t): x< bt\}$ with a Lipschitz constant $L$.
  Both $L$ and $\Norm{e}_{\leb{\infty}}$ are independent of $\delta$.
  From \eqref{eq:hypboundary} and \eqref{slreb_1}, for almost every $s$,
  \begin{equation} \label{slreb-int_0}
      0 \leq  -  \int_\W
      \frac 12 e(s,\cdot)^2  \,\mathrm{d} x
      - \int_0^s \int_\W 
      r_1 e
      \,\mathrm{d} x \,\mathrm{d} t
      + \int_\W 
      \frac 12 e(0,\cdot)^2 \,\mathrm{d} x
  \end{equation}
  The stated bound follows as in Theorem~\ref{the:sl}.
\end{proof}

\begin{remark}[Relative entropy for other types of boundary conditions]
  Our results can be extended to
  other boundary conditions:
  Analogous versions of Theorem~\ref{the:sl} hold for periodic boundary
  conditions, and when $\veps > 0$ (parabolic case), also for any other
  boundary conditions where the boundary terms vanish, such as no-flux
  boundary conditions.
  When $\veps = 0$, the problem is hyperbolic. If the boundary conditions
  for $u$ and $\widehat u$ differ (e.g., $e = u - \widehat u \neq 0$ at the inflow boundary
  where $b > 0$ at $x = 0$), then the boundary flux contributes an additional
  term to the right hand side of \eqref{eq:resl}.
  In this case, it suffices that the boundary data for $u$ and $\widehat u$
  belong to $\leb{2}((0,T) \times \partial \W)$.
\end{remark}

We now apply Theorems \ref{the:sl} and \ref{cor:sl} to derive a posteriori error bounds for the fully discrete RKdG approximation.

\begin{corollary}[Fully discrete a posteriori bound for linear scalar problems]
  \label{cor:lsapost}
  Under the conditions of Theorem \ref{the:sl} with $\veps \geq 0$,
  suppose that $\{u_h^i\}_{i=0}^N$, where $u_h^i \in \fes_q^s$, is an RKdG approximation with an (IMEX) RK temporal discretization
  of order at most 3, with temporal reconstruction $\ut$ (Definition \ref{def:grec})
  and space-time reconstruction $\uts$ (Definition \ref{def:str}),
  with residuals $r_1, r_2$ defined by \eqref{eq:r1}--\eqref{eq:r2}.
  For each $i = 0,\dots, N$, we have
  \begin{multline*}
    \Norm{u(t_i) - u_h^i}_{\leb{2}(\W)}^2
      + 2\veps \enorm{u - \ut}_{\leb{2}(0,t_i; \fes_q)}^2 \\
      \leq 8 \Big(\Norm{u(0) - \uts(0)}_{\leb{2}(\W)}^2
      + 4\Norm{r_1}_{\leb{1}(0,t_i; \leb{2}(\W))}^2
      + \veps \Norm{r_2}_{\leb{2}(0,t_i; \sobh{-1}(\W))}^2\Big) \\
      + 2\qp{\Norm{\uts(t_i) - u_h^i}_{\leb{2}(\W)}^2
      + 2\veps \enorm{\uts - \ut}_{\leb{2}(0,t_i; \fes_q)}^2}
  \end{multline*}
\end{corollary}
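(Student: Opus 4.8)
The plan is to apply the continuous stability estimate of Theorem~\ref{the:sl} (or Theorem~\ref{cor:sl} when $\veps=0$) with the Lipschitz comparison solution $\hat u$ chosen to be the space-time reconstruction $\uts$, and then to convert the resulting bound on $u-\uts$ into a bound on the true numerical error $u(t_i)-u_h^i$ by two applications of the triangle inequality. First I would check that $\uts$ is an admissible choice in Theorem~\ref{the:sl}: by Lemma~\ref{lem:space-time-reconst}, $\uts\in\sob{1}{\infty}((0,T)\times\W)$ and is Lipschitz in space; by the definition of the residual in \S\ref{sec:dres}, $\uts$ solves the perturbed equation \eqref{eq:cdrsl} with $r=r_1+\veps r_2$ and with the regularity $r_1\in\leb2((0,T)\times\W)\hookrightarrow\leb1(0,T;\leb2(\W))$ and $r_2\in\leb2(0,T;\sobh{-1}(\W))$ required by the theorem. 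The only point needing care is the boundary data: for the periodic scheme considered here $\uts$ is periodic by construction, so $e:=u-\uts$ has vanishing boundary contributions and the periodic version of Theorem~\ref{the:sl} (see the remark after Theorem~\ref{cor:sl}) applies; in a Dirichlet setting one works under the compatibility assumption that $\uts$ reproduces $g_D$. Applying the theorem with $\hat u_0=\uts(0)$ gives
\begin{equation*}
  \tfrac14\Norm{u-\uts}_{\leb\infty(0,t_i;\leb2(\W))}^2+\tfrac\veps2\Norm{\partial_x(u-\uts)}_{\leb2((0,t_i)\times\W)}^2\leq C,\qquad C:=\Norm{u(0)-\uts(0)}_{\leb2(\W)}^2+4\Norm{r_1}_{\leb1(0,t_i;\leb2(\W))}^2+\veps\Norm{r_2}_{\leb2(0,t_i;\sobh{-1}(\W))}^2,
\end{equation*}
hence $\Norm{u-\uts}_{\leb\infty(0,t_i;\leb2)}^2+2\veps\Norm{\partial_x(u-\uts)}_{\leb2((0,t_i)\times\W)}^2\leq 4C$.

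Next I would split the error. For the $\leb2$ term, $(a+b)^2\leq 2a^2+2b^2$ gives $\Norm{u(t_i)-u_h^i}_{\leb2}^2\leq 2\Norm{u(t_i)-\uts(t_i)}_{\leb2}^2+2\Norm{\uts(t_i)-u_h^i}_{\leb2}^2\leq 2\Norm{u-\uts}_{\leb\infty(0,t_i;\leb2)}^2+2\Norm{\uts(t_i)-u_h^i}_{\leb2}^2$. For the energy term, note that $u(t,\cdot)\in\sobh1(\W)$ and $\uts(t,\cdot)\in C(\W)$ both have vanishing interior jumps, so $\enorm{u-\uts}_{\leb2(0,t_i;\fes_q)}^2=\Norm{\partial_x(u-\uts)}_{\leb2((0,t_i)\times\W)}^2$; combining this with the triangle inequality for the seminorm $\enorm{\cdot}_{\leb2(0,t_i;\fes_q)}$ applied to $u-\ut=(u-\uts)+(\uts-\ut)$ and again $(a+b)^2\leq 2a^2+2b^2$ yields $\enorm{u-\ut}_{\leb2(0,t_i;\fes_q)}^2\leq 2\Norm{\partial_x(u-\uts)}_{\leb2((0,t_i)\times\W)}^2+2\enorm{\uts-\ut}_{\leb2(0,t_i;\fes_q)}^2$.

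Finally I would assemble the pieces: adding $2\veps$ times the energy estimate to the $\leb2$ estimate and inserting $\Norm{u-\uts}_{\leb\infty}^2+2\veps\Norm{\partial_x(u-\uts)}^2\leq 4C$ gives
\begin{equation*}
  \Norm{u(t_i)-u_h^i}_{\leb2(\W)}^2+2\veps\enorm{u-\ut}_{\leb2(0,t_i;\fes_q)}^2\leq 8C+2\Norm{\uts(t_i)-u_h^i}_{\leb2(\W)}^2+4\veps\enorm{\uts-\ut}_{\leb2(0,t_i;\fes_q)}^2,
\end{equation*}
which is exactly the claimed inequality once $8C$ is written out and one notes $4\veps\enorm{\uts-\ut}^2=2\cdot2\veps\enorm{\uts-\ut}^2$. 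When $\veps=0$ one uses Theorem~\ref{cor:sl} instead; its coefficient on $\Norm{r_1}^2$ is $1$, so the argument produces $8\Norm{r_1}^2$ rather than $32\Norm{r_1}^2$ and the stated uniform bound still holds a fortiori. I do not expect a real obstacle here: the argument is just triangle inequalities plus the already-established Theorems~\ref{the:sl}--\ref{cor:sl} and the reconstruction properties of \S\ref{sec:nsr}; the only thing demanding attention is verifying that all hypotheses of Theorem~\ref{the:sl} transfer to $\hat u=\uts$ — boundary-data compatibility and the residual regularity $r_1\in\leb1(0,t_i;\leb2)$, $r_2\in\leb2(0,t_i;\sobh{-1})$ — which is precisely what the construction in \S\ref{sec:dres} was designed to guarantee.
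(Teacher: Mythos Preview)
Your proposal is correct and follows essentially the same route as the paper's own proof: apply Theorem~\ref{the:sl} (or Theorem~\ref{cor:sl} for $\veps=0$) with $\hat u=\uts$, observe that the dG energy norm of $u-\uts$ reduces to $\Norm{\partial_x(u-\uts)}_{\leb2((0,t_i)\times\W)}$ since both are continuous, and then split both the $\leb2$ and energy errors via the triangle inequality with $\uts$ as intermediate. Your treatment is in fact slightly more careful than the paper's in verifying the admissibility hypotheses (boundary compatibility, residual regularity) and in noting that the $\veps=0$ case yields a sharper constant that is absorbed by the uniform statement.
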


\begin{remark}[Interpretation of the hyperbolic and parabolic components]
  In Corollary \ref{cor:lsapost}, the error is controlled in two norms.
  In the hyperbolic component $\Norm{u(t_i) - u_h^i}_{\leb{2}(\W)}^2$,
  it makes sense to directly use the numerical solution at discrete time points $t_i$.
  However, the parabolic component $2\veps \enorm{u - \ut}_{\leb{2}(0,t_i; \fes_q)}^2$
  uses integration in time, so using the temporal reconstruction $\widehat u^t_h$ makes more sense.
\end{remark}

\begin{proof}[Proof of Corollary \ref{cor:lsapost}]
  We decompose the discrete error using the space-time reconstruction $\uts$ as an intermediate function.
  Applying the triangle inequality and the inequality $(a+b)^2 \leq 2(a^2 + b^2)$, we obtain
  \begin{equation}\label{eq:decomp1}
    \Norm{u(t_i) - u_h^i}_{\leb{2}(\W)}^2 \leq 2\Norm{u(t_i) - \uts(t_i)}_{\leb{2}(\W)}^2 + 2\Norm{\uts(t_i) - u_h^i}_{\leb{2}(\W)}^2.
  \end{equation}
  Similarly, for the dG energy norm,
  \begin{equation}\label{eq:decomp2}
    \enorm{u - \ut}_{\leb{2}(0,t_i; \fes_q)}^2 \leq 2\enorm{u - \uts}_{\leb{2}(0,t_i; \fes_q)}^2 + 2\enorm{\uts - \ut}_{\leb{2}(0,t_i; \fes_q)}^2.
  \end{equation}
  
  To bound the first terms on the right-hand sides of \eqref{eq:decomp1} and \eqref{eq:decomp2}, we apply Theorem \ref{the:sl} with $\widehat u = \uts$.
  Since $u$ and $\uts$ are continuous across interfaces, the dG energy norm reduces to 
  $\enorm{u - \uts}_{\leb{2}(0,t_i; \fes_q)}^2 = \Norm{\partial_x(u - \uts)}_{\leb{2}((0,t_i) \times \W)}^2$.
  Theorem \ref{the:sl} then gives
  \begin{multline*}
    \Norm{u - \uts}_{\leb{\infty}(0,t_i; \leb{2}(\W))}^2 + 2\veps \enorm{u - \uts}_{\leb{2}(0,t_i; \fes_q)}^2 \\
    \leq 4\Big(\Norm{u(0) - \uts(0)}_{\leb{2}(\W)}^2 + 4\Norm{r_1}_{\leb{1}(0,t_i; \leb{2}(\W))}^2 + \veps\Norm{r_2}_{\leb{2}(0,t_i; \sobh{-1}(\W))}^2\Big).
  \end{multline*}
  
  Combining this bound with \eqref{eq:decomp1} and \eqref{eq:decomp2} yields:
  \begin{multline}\nonumber
    \Norm{u(t_i) - u_h^i}_{\leb{2}(\W)}^2 + 2\veps \enorm{u - \ut}_{\leb{2}(0,t_i; \fes_q)}^2 \\
      \leq 8\Big(\Norm{u(0) - \uts(0)}_{\leb{2}(\W)}^2 + 4\Norm{r_1}_{\leb{1}(0,t_i; \leb{2}(\W))}^2 + \veps\Norm{r_2}_{\leb{2}(0,t_i; \sobh{-1}(\W))}^2\Big) \\
      + 2\qp{\Norm{\uts(t_i) - u_h^i}_{\leb{2}(\W)}^2 + 2\veps \enorm{\uts - \ut}_{\leb{2}(0,t_i; \fes_q)}^2},
  \end{multline}
  which completes the proof.
\end{proof}

\section{Non-linear scalar problems}
\label{sec:nls}

In this section, we consider the scalar case with nonlinear flux $f \in \cont{2}(\reals,\reals)$
and diffusion coefficient $A\in \cont{2}(\reals,\reals_+)$. 
We normalize $A$ to satisfy $\inf_{u \in \reals} A(u) =1$, excluding degenerate diffusion cases.
Thus, equations \eqref{eq:cd} and \eqref{eq:cdr} become
\begin{equation}\label{eq:cdsn}
  \partial_t u + \partial_x f(u)
  =
  \veps \del_{x} (A(u)\partial_x u)
\end{equation}
and
\begin{equation}\label{eq:cdrsn}
  \partial_t \widehat u + \partial_x f(\widehat u)
  =
  \veps \del_{x} (A(\widehat u)\partial_x \widehat u)+ r
\end{equation}
where the residual term decomposes as $r = r_1 + \veps r_2$, with $r_1 \in \leb{1}(0,t; \leb{2}(\W))$ and $r_2 \in \leb{2}(0,t; \sobh{-1}(\W))$.

We now apply Lemma \ref{lem:greb} to derive stability estimates for this nonlinear case.

\begin{remark}[Entropy]
  \label{rem:ent-cdsn}
  We use the entropy/entropy-flux pair
  \begin{equation*}
    \eta(u) = \frac{1}{2} u^2, \quad q(u)= \int^u_0 \tilde u f'(\tilde u) \operatorname{d} \tilde u.
  \end{equation*}
  which yields stronger results than other entropy/entropy-flux pairs. 
  Note that Kruzhkov entropy solutions satisfy \eqref{eq:eniq} for all entropy, including this one.
\end{remark}

\begin{remark}[Boundary conditions]
  For the parabolic case $\veps>0$, we present results for Dirichlet boundary conditions,
  which also extend to periodic conditions.
  The hyperbolic case $\veps=0$ requires more careful boundary treatment: the classification of boundary segments as inflow or outflow depends on the solution itself, necessitating a weak interpretation of Dirichlet conditions \cite{Ott96,KL01}. This solution-dependent classification significantly complicates the control of relative entropy flux. 
  Therefore, we restrict our analysis for the hyperbolic case to periodic boundary conditions on $\mathbb{T}^1$.
\end{remark}

\begin{theorem}[Relative entropy bound for nonlinear scalar equation]\label{the:sn}
  For $\veps>0$, suppose $u \in
  \leb{2}(0,T;\sobh{1}(\W)) \cap \leb{\infty}((0,T) \times \W)$ 
  is an entropy solution of \eqref{eq:cdsn} with respect to
  the entropy/entropy-flux pair specified in Remark \ref{rem:ent-cdsn}, with
  initial data $u_0 \in \sobh{1}(\W)$ and Dirichlet boundary data $g_D\in
  \sob{1}{\infty}((0,T)\times \partial \W)$. 
  Let $\widehat u \in \sob{1}{\infty}((0,T)\times \W)$ be a Lipschitz weak solution
  of \eqref{eq:cdrsn} with initial data $\widehat u_0 \in
  \sob{1}{\infty}(\W)$ and the same boundary data.
  Then for almost all $t \in (0,T)$, we obtain
  \begin{multline}\nonumber
      \Norm{u - \widehat u}_{\leb{\infty}(0,t;\leb{2}(\W))}^2  
      +2\veps\Norm{\partial_x u - \partial_x \widehat u}_{\leb{2}((0,t)\times \W)}^2\\
      \leq \Big(4\Norm{u_0 - \widehat u_0}_{\leb{2}(\W)}^2
      + 16\Norm{r_1}_{\leb{1}(0,t;\leb{2}( \W))}^2
      + \veps8 \Norm{r_2}_{\leb{2}(0,t; \sobh{-1}(\W))}^2\Big) \exp{\qp{8  K[\widehat u]t}},
  \end{multline}
  where
  \begin{equation}\nonumber
    K[\widehat u] := \Big(\veps \Norm{\partial_x \widehat u}_{\leb{\infty}((0,T)\times \W)}
    \sup_{v \in [a,b]} |A'(v)|^2 + 
    \sup_{v \in [a,b]} |f''(v)|\Big)
    \Norm{\partial_x \widehat u}_{\leb{\infty}((0,T)\times \W)}
  \end{equation}
  with $a:= \min\{ \operatorname{essinf} u_0, \operatorname{essinf} \widehat u_0\}$ and
  $b:= \max\{ \operatorname{esssup} u_0, \operatorname{esssup} \widehat u_0\}$.
\end{theorem}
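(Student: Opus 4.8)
The plan is to specialise the relative entropy inequality of Lemma \ref{lem:greb} to the scalar setting with the quadratic entropy $\eta(u)=\tfrac12 u^2$, $\D^2\eta \equiv 1$, and then estimate the terms that are genuinely nonlinear --- namely $\partial_x(\D\eta(\hatu))\,f(u|\hatu)$ and the viscous cross terms --- by the Lipschitz constants of $\hatu$, producing a Gronwall-type differential inequality. First I would record that with this entropy $\D\eta(u)-\D\eta(\hatu)=e:=u-\hatu$ and $\D^2\eta(\hatu)(u-\hatu)=e$, so the flux terms simplify: $q(u|\hatu)=\int_{\hatu}^{u}(u-\tilde u)f'(\tilde u)\d\tilde u$ (quadratic-in-$e$ up to $f'$), $\vec f(u|\hatu)=f(u)-f(\hatu)-f'(\hatu)e$, and the parabolic flux contributions collapse onto $-\veps\, e\,(\partial_x u-\partial_x\hatu)=-\veps\,e\,\partial_x e$. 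The structural identity $(\D\eta(\vec a)-\D\eta(\vec b))\vec A(\vec b)=(\D^2\eta(\vec b)(\vec a-\vec b))\cdot\vec A(\vec b)$ does not quite hold here because $A$ depends on $u$, not $\hatu$; instead I would keep $A(u)$ and $A(\hatu)$ separate and write the $\Phi$-weighted viscous terms as $-\veps(\partial_x e)\,A(u)\,\partial_x u+\veps(A(\hatu)\partial_x\hatu)\,\partial_x e=-\veps A(u)(\partial_x e)^2-\veps(A(u)-A(\hatu))(\partial_x\hatu)(\partial_x e)$. The first piece is the good dissipative term (using $A\ge 1$); the second is an error term of size $\veps|A'|\,|e|\,\|\partial_x\hatu\|_\infty|\partial_x e|$, absorbed by Young's inequality into $\tfrac{\veps}{2}\int A(u)(\partial_x e)^2$ at the cost of a term $\tfrac{\veps}{2}\|A'\|_\infty^2\|\partial_x\hatu\|_\infty^2\int e^2$, which is exactly the first summand in $K[\hatu]$.

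Next I would handle the hyperbolic flux term $-\partial_x(\D\eta(\hatu))\,\vec f(u|\hatu)=-f'(\hatu)(\partial_x\hatu)\,(f(u)-f(\hatu)-f'(\hatu)e)$; by Taylor's theorem $|\vec f(u|\hatu)|\le\tfrac12\sup|f''|\,e^2$, and combining with $|f'(\hatu)\partial_x\hatu|$ bounded --- here I would note that an $L^\infty$ bound on $u$ (hence on $e$) together with the maximum principle for $\hatu$ confines all values to $[a,b]$, so $\sup|f''|$ is over $[a,b]$; actually the cleaner route, matching the stated $K[\hatu]$, is to bound this term by $\sup_{[a,b]}|f''|\,\|\partial_x\hatu\|_\infty\,\tfrac12\int e^2$, which is the second summand in $K[\hatu]$. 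The residual terms are treated as in Theorem \ref{the:sl}: $-\int\Phi r_1 e$ is bounded using $\|r_1\|_{L^1(L^2)}$ and $\|e\|_{L^\infty(L^2)}$ (this forces the $\leb{1}(0,t;\leb2)$ norm on $r_1$ and the $16$ constant after the final rearrangement), while $-\veps\langle r_2,\Phi e\rangle$ is controlled by $\veps\|r_2\|_{L^2(H^{-1})}\|\partial_x e\|_{L^2(L^2)}$, again absorbed by Young into the dissipation with a leftover $\tfrac{\veps}{2}\|r_2\|^2$. I would use the same test function $\Phi_\delta=\psi_\delta(x)\zeta_\delta(t)$ from \eqref{def:psidelta} and pass $\delta\to0$; since we restrict to periodic boundary conditions in the hyperbolic-flux part, $e\in\leb2(0,T;\sobh1_0)$ or is periodic, so Lemma \ref{lem:boundary-est} kills the spatial boundary layers exactly as before, leaving the relative entropy flux $q(u|\hatu)$ boundary contribution to vanish by periodicity.

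After passing to the limit I obtain, for a.e. $s\in(0,t)$,
\begin{equation}\nonumber
  \tfrac12\|e(s)\|_{\leb2(\W)}^2+\tfrac{\veps}{2}\|\partial_x e\|_{\leb2((0,s)\times\W)}^2
  \le \tfrac12\|e(0)\|_{\leb2(\W)}^2+\|r_1\|_{\leb1(0,s;\leb2)}\|e\|_{\leb\infty(0,s;\leb2)}
  +\tfrac{\veps}{2}\|r_2\|_{\leb2(0,s;\sobh{-1})}^2+K[\hatu]\int_0^s\|e(\sigma)\|_{\leb2(\W)}^2\d\sigma.
\end{equation}
Then I would split this into an estimate for $\|e(s)\|_{\leb2}^2$ and one for the dissipation, take the supremum over $s\in(0,t)$, use Young on the cross term $\|r_1\|\|e\|$ to absorb a quarter of $\|e\|_{\leb\infty(L^2)}^2$, rearrange the constants (this is where the $4$, $16$, $8$ prefactors emerge), and finally apply Gronwall's lemma to the resulting inequality $\|e\|_{\leb\infty(0,t;L^2)}^2\le(\text{data}+\text{residuals})+8K[\hatu]\int_0^t\|e\|_{\leb\infty(0,\sigma;L^2)}^2\d\sigma$ to produce the factor $\exp(8K[\hatu]t)$. \textbf{Main obstacle.} The delicate point is justifying that all values of $u$ and $\hatu$ stay in $[a,b]$ so that the suprema defining $K[\hatu]$ are legitimate --- for $\hatu$ this needs a comparison/maximum principle for the perturbed equation \eqref{eq:cdrsn}, and for $u$ one uses the Kruzhkov-type $L^\infty$ bound; a secondary subtlety is that the $u$-dependence of $A$ breaks the clean cancellation available in the linear case, so the viscous cross term must be carefully Young-split against the (variable-coefficient) dissipation $\veps\int A(u)(\partial_x e)^2\ge\veps\int(\partial_x e)^2$ without losing the $\veps$-robustness of the estimate.
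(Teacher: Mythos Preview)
Your proposal is correct and follows essentially the same route as the paper's proof: specialise Lemma~\ref{lem:greb} with the quadratic entropy, decompose the viscous term as $\veps\,\partial_x e\,(A(u)\partial_x u-A(\hatu)\partial_x\hatu)=\veps A(u)(\partial_x e)^2+\veps(A(u)-A(\hatu))(\partial_x\hatu)\partial_x e$, use $A\ge1$ and Young on the cross term, bound $f(u|\hatu)$ by $\tfrac12\sup|f''|\,e^2$, handle the boundary layers via Lemma~\ref{lem:boundary-est}, split the resulting estimate, take the supremum, and apply Gronwall. One small slip: you wrote $-\partial_x(\D\eta(\hatu))\,f(u|\hatu)=-f'(\hatu)(\partial_x\hatu)\,f(u|\hatu)$, but with $\D\eta(\hatu)=\hatu$ this is simply $-(\partial_x\hatu)\,f(u|\hatu)$ with no extra $f'(\hatu)$ factor; your subsequent ``cleaner route'' corrects this and lands on the right bound.
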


\begin{remark}[Maximum principle]
  The bounds $a$ and $b$ are finite due to the maximum principle for scalar conservation laws, see \cite[Chapter 6]{Daf10}.
\end{remark}

\begin{proof}[Proof of Theorem \ref{the:sn}]
  The entropy/entropy-flux pair yields
  \begin{equation}\label{def:resn}
    \eta(u|\widehat u) = \frac{1}{2} (u - \widehat u)^2 \quad \text{and} \quad
    \D \eta(u) - \D \eta (\widehat u) = \D^2 \eta(\widehat u)( u- \widehat u) = u - \widehat u.
  \end{equation}
  Let $e := u - \widehat u$. Applying Lemma \ref{lem:greb} with this substitution gives
  \begin{multline}\label{sn-reb}
      0 \leq  \int_0^T \int_\W
      \partial_t \Phi \frac 12 e^2 + \partial_x \Phi \Big[q(u|\widehat u) 
      -\veps e ( A(u) \partial_x u
      - A(\widehat u) \partial_x \widehat u) \Big]\\
      + \Phi\Big[  - \veps \partial_x e 
      ( A(u) \partial_x u  - A(\widehat u) \partial_x \widehat u)
      -  r_1 e
      - \partial_x  \widehat u f( u|\widehat u) 
      \Big]\d x \d t
      \\ 
      - \veps\langle r_2 , \Phi e\rangle
      + \int_\W 
      \Phi(0,\cdot) \frac 12e(0, \cdot)^2 \d x.
  \end{multline}
  Using the test function $\Phi_\delta$ from \eqref{def:psidelta}, we obtain
  \begin{multline}\label{sn-reb2}
      0 \leq  - \frac 1 \delta \int_s^{s + \delta}\int_\W \frac 12 e^2 \d x \d t 
      + \frac 1 \delta \int_0^s \int_0^\delta q(u|\widehat u) - \veps e ( A(u) \partial_x u
      - A(\widehat u) \partial_x \widehat u) \d x \d t \\
      - \frac 1 \delta \int_0^s \int_{1-\delta}^1 q(u|\widehat u) - \veps e ( A(u) \partial_x u
      - A(\widehat u) \partial_x \widehat u) \d x \d t\\
      +\int_0^s \int_\W 
      \Big[  - \veps \partial_x e
      ( A(u) \partial_x u  - A(\widehat u) \partial_x \widehat u)
      - r_1 e
      - \partial_x(\widehat u) f( u|\widehat u) 
      \Big]\d x \d t
      \\ 
       - \veps\langle r_2 , \Phi_\delta e\rangle
      + \int_\W \frac 12e(0, \cdot)^2 \d x + \mathcal{O}(\delta)
  \end{multline}
  where the term $\mathcal{O}(\delta)$ represents integrals over regions of size $\delta^2$ with integrand scaled by $\delta^{-1}$.
  The boundary layer integrals vanish as $\delta \rightarrow 0$ by applying \eqref{boundarylemma}
  twice with $\zeta=e$, first taking $\xi=A(u) \partial_x e$ and then $\xi=(A(u)-A(\widehat{u}))\partial_x \widehat u$.
  
  Since $\inf_{u \in \reals} A(u) = 1$, we have
  \begin{multline*}
      \veps\partial_x e (A( u) \partial_x u - A(\widehat u) \partial_x \widehat u)
      = \veps\partial_x e  A( u) \partial_x e + \veps\partial_x e ( A( u)  - A(\widehat u)) \partial_x \widehat u\\
      \geq \veps (\partial_x e)^2 + \veps\partial_x e ( A( u)  - A(\widehat u)) \partial_x \widehat u
      \geq \veps (\partial_x e)^2 - \veps |\partial_x e| \sup_{v\in[a,b]}|A'(v)| |e| |\partial_x \widehat u|.
  \end{multline*}

  Taking the limit as $\delta \to 0$ in \eqref{sn-reb2} and using that the relative flux $f(u|\widehat u)$ is quadratic in $e$, we obtain for almost all $0 < s < T$,
  \begin{equation}\label{sn-reb-int}
    \begin{split}
      &\frac 1 2  \int_\W e(s,\cdot)^2 \d x +  \int_0 ^s\int_\W \veps (\partial_x e)^2 \d x\d t\\
      &\leq  \frac 1 2  \int_\W e(0,\cdot)^2 \d x
      -
      \int_0 ^s \int_\W \partial_x \widehat u\qp{f(u|\widehat u) }
      -
      \veps |\partial_x e| \sup_{v\in[a,b]}|A'(v)| |e| |\partial_x \widehat u|
      +
      r_1 e
      \d x\d t +\veps \langle r_2, e \rangle
      \\
      &\leq  \frac 1 2  \int_\W e(0,\cdot)^2 \d x
      + 2\Norm{r_1}_{\leb{1}(0,s;\leb{2}( \W))}^2
      +
      \veps  \Norm{r_2}_{\leb{2}(0,s;\sobh{-1}(\W))}^2
      +
      \frac 1 8 \Norm{e}_{\leb{\infty}(0,s;\leb{2}( \W))}^2
      \\ &\quad  +
      K[\widehat u]\Norm{e}_{\leb{2}((0,s)\times \W)}^2
      +
      \frac \veps 2 \norm{e}_{\leb{2}(0,s;\sobh{1}(\W))}^2
    \end{split}
  \end{equation}
  where we applied \eqref{boundarylemma} in the second inequality.
  
  We split \eqref{sn-reb-int} into separate bound for the two terms on the left hand side which implies
  \begin{equation}\label{sn-reb-int1}
    \begin{split}
      \int_0 ^s\int_\W  \frac \veps 2 (\partial_x e)^2 \d x\d t
      &\leq  \frac 1 2  \int_\W e(0,\cdot)^2 \d x
      + 2\Norm{r_1}_{\leb{1}(0,s ;\leb{2}( \W))}^2
      \\
      &\quad +
      \veps  \Norm{r_2}_{\leb{2}(0,s;\sobh{-1}(\W))}^2
      +
      \frac 1 8 \Norm{e}_{\leb{\infty}(0,s;\leb{2}( \W))}^2
      +
      K[\widehat u]\Norm{e}_{\leb{2}((0,s)\times \W)}^2
    \end{split}
  \end{equation}
  and
  \begin{equation}\label{sn-reb-int2}
    \begin{split}
      \frac 1 2  \int_\W e(s,\cdot)^2 \d x
      &\leq  \frac 1 2  \int_\W e(0,\cdot)^2 \d x
      + 2\Norm{r_1}_{\leb{1}(0,s ;\leb{2}( \W))}^2
      \\
      &\quad +
      \veps  \Norm{r_2}_{\leb{2}(0,s;\sobh{-1}(\W))}^2
      +
      \frac 1 8 \Norm{e}_{\leb{\infty}(0,s;\leb{2}( \W))}^2
      +
      K[\widehat u]\Norm{e}_{\leb{2}((0,s)\times \W)}^2.
    \end{split}
  \end{equation}
  Since the right-hand side of \eqref{sn-reb-int2} is non-decreasing in $s$, we obtain
  \begin{equation}\label{sn-reb-int3}
    \begin{split}
      &\frac 1 4 \Norm{e}_{\leb{\infty}(0,s;\leb{2}( \W))}^2+\int_0 ^s\int_\W \frac \veps 2 (\partial_x e)^2 \d x\d t 
      \\
      &\leq   \int_\W e(0,\cdot)^2 \d x
      + 4\Norm{r_1}_{\leb{1}(0,s ;\leb{2}( \W))}^2
      +
      2\veps  \Norm{r_2}_{\leb{2}(0,s;\sobh{-1}(\W))}^2
      +
      2 K[\widehat u]\Norm{e}_{\leb{2}((0,s)\times \W)}^2.
    \end{split}
  \end{equation}
  Applying Gronwall's inequality then yields the desired stability estimate.
\end{proof}

\begin{theorem}[Relative entropy bound for the inviscid nonlinear equation]
  \label{cor:sn}
  For the inviscid case $\veps=0$, suppose $u \in \leb{\infty}((0,T) \times \mathbb{T}^1)$
  is an entropy solution of \eqref{eq:cdsn} with respect to the entropy/entropy-flux pair
  from Remark \ref{rem:ent-cdsn}, with initial data $u_0 \in
  \leb{\infty}(\mathbb{T}^1)$. Let $\widehat u \in
  \sob{1}{\infty}((0,T)\times \mathbb{T}^1) $ be a solution of
  \eqref{eq:cdrsn} with initial data $\widehat u_0 \in
  \sob{1}{\infty}(\mathbb{T}^1)$. Then for almost all $t \in (0,T)$, the following inequality is satisfied:
  \begin{equation*}
    \Norm{u - \widehat u}_{\leb{\infty}(0,t;\leb{2}(\W))}^2 
    \leq \Big(2\Norm{u_0 - \widehat u_0}_{\leb{2}(\W)}^2
    + 4\Norm{r_1}_{\leb{1}(0,t;\leb{2}(\mathbb{T}^1))}^2\Big) \exp\qp{4K[\widehat u]t}.
  \end{equation*}
  where
  \begin{equation*}
    K[\widehat u]:=
    \sup_{v \in [a,b]} |f''(v)|
    \norm{\widehat u}_{\leb{\infty}(0,T;\sob{1}{\infty}(\W))}.
  \end{equation*}
  with $a, b$ are defined as in Theorem \ref{the:sn}.
  
\end{theorem}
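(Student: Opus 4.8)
The plan is to specialise the relative entropy inequality of Lemma~\ref{lem:greb} to the case at hand: $\veps=0$, the quadratic entropy $\eta(u)=\tfrac12 u^2$, and periodic boundary conditions on $\mathbb{T}^1$. Writing $e:=u-\widehat u$, one has $\D\eta(u)-\D\eta(\widehat u)=\D^2\eta(\widehat u)(u-\widehat u)=e$, $\D^2\eta\equiv 1$ and $\partial_x(\D\eta(\widehat u))=\partial_x\widehat u$, so that every term carrying a factor $\veps$ (including the $\veps\langle r_2,\cdot\rangle$ pairing) drops and Lemma~\ref{lem:greb} collapses to
\begin{multline*}
 0\le \int_0^T\!\!\int_{\W}\partial_t\Phi\,\tfrac12 e^2
 +\partial_x\Phi\,q(u|\widehat u)
 -\Phi\bigl(r_1 e+\partial_x\widehat u\,f(u|\widehat u)\bigr)\d x\d t\\
 +\int_\W\Phi(0,\cdot)\,\tfrac12 e(0,\cdot)^2\d x ,
\end{multline*}
valid for every $\Phi\in\sob{1}{\infty}_0([0,T)\times\W,\reals_+)$; this is the $\veps=0$ case of \eqref{sn-reb}.

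The decisive simplification afforded by the periodic setting is that no spatial localisation is needed. I would take $\Phi_\delta(x,t)=\zeta_\delta(t)$ with $\zeta_\delta$ the temporal cut-off of \eqref{def:psidelta}, so that $\partial_x\Phi_\delta\equiv 0$ annihilates the relative-entropy-flux term $\partial_x\Phi_\delta\,q(u|\widehat u)$ outright, and none of the delicate boundary-layer limits of \S\ref{sec:ls}--\S\ref{sec:nls} are required. Sending $\delta\searrow 0$, with the Lebesgue differentiation theorem applied to $-\tfrac1\delta\int_s^{s+\delta}\!\int_\W\tfrac12 e^2$ and dominated convergence applied to the remaining integrals, yields for almost every $s\in(0,T)$
\begin{equation*}
 \tfrac12\Norm{e(s)}_{\leb{2}(\W)}^2
 \le \tfrac12\Norm{e(0)}_{\leb{2}(\W)}^2
 -\int_0^s\!\!\int_\W r_1 e\d x\d t
 -\int_0^s\!\!\int_\W \partial_x\widehat u\,f(u|\widehat u)\d x\d t .
\end{equation*}

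The two integrals on the right are then estimated exactly as in the proof of Theorem~\ref{the:sn}. The relative flux $f(u|\widehat u)$ is the second-order Taylor remainder of $f$, hence quadratic in $e$ with $|f(u|\widehat u)|\le\tfrac12\sup_{v\in[a,b]}|f''(v)|\,e^2$, where the interval $[a,b]$ contains the ranges of $u$ --- by the maximum principle for scalar conservation laws, cf.~\cite[Chapter 6]{Daf10} --- and of $\widehat u\in\sob{1}{\infty}$; together with $\Norm{\partial_x\widehat u}_{\leb{\infty}((0,T)\times\W)}\le\norm{\widehat u}_{\leb{\infty}(0,T;\sob{1}{\infty}(\W))}$ this bounds $\bigl|\int_0^s\!\int_\W\partial_x\widehat u\,f(u|\widehat u)\bigr|$ by a constant multiple of $K[\widehat u]\int_0^s\Norm{e(\tau)}_{\leb{2}(\W)}^2\d\tau$. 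For the residual term, Cauchy--Schwarz in $x$, H\"older in $t$ and Young's inequality give $\bigl|\int_0^s\!\int_\W r_1 e\bigr|\le C\,\Norm{r_1}_{\leb{1}(0,s;\leb{2}(\W))}^2+\tfrac14\Norm{e}_{\leb{\infty}(0,s;\leb{2}(\W))}^2$.

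Since every term of the resulting right-hand side is non-decreasing in $s$, I would take the supremum over $s\in(0,t)$, absorb the $\tfrac14\Norm{e}_{\leb{\infty}(0,t;\leb{2}(\W))}^2$ contribution into the left-hand side, and only then apply Gronwall's inequality to the remaining term $K[\widehat u]\int_0^t\Norm{e(\tau)}_{\leb{2}(\W)}^2\d\tau$; this produces the asserted exponential bound, with the constants $2$, $4$ and the rate $4K[\widehat u]$ appearing (generously) through the chosen Young parameter, precisely as in the last step of the proof of Theorem~\ref{the:sn}. The argument presents no genuine obstacle --- it is essentially the $\veps=0$ specialisation of Theorem~\ref{the:sn} posed on the torus --- so the only point that truly needs care is the order of operations in this final step: the right-hand side simultaneously carries an $\leb{\infty}(0,t;\leb{2})$ norm of $e$, which must be absorbed on the left, and a time integral of $\leb{2}$ norms of $e$, to which Gronwall applies, and these must be dealt with in that order for the estimate to close.
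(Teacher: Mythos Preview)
Your proposal is correct and follows essentially the same route as the paper: specialise Lemma~\ref{lem:greb} to $\veps=0$ with the quadratic entropy, exploit periodicity by choosing the spatially constant test function $\Phi_\delta=\zeta_\delta(t)$ so that the $\partial_x\Phi\,q(u|\widehat u)$ term vanishes, pass $\delta\to 0$, bound $f(u|\widehat u)$ by its Taylor remainder, and then absorb the $\tfrac14\Norm{e}_{\leb{\infty}(0,t;\leb{2})}^2$ term before applying Gronwall. Your explicit remark about the order of the last two operations is exactly the point the paper handles implicitly.
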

\begin{proof}
  For the inviscid case $\veps=0$, the relative entropy/entropy-flux pair is
  \begin{equation}
    \eta(u|\widehat u) = \frac{1}{2}(u - \widehat u)^2, \quad 
    q(u|\widehat u) = \int_{\widehat u}^u (\tilde u - \widehat u) f'(\tilde u) d\tilde u.
  \end{equation}
  Define $e := u - \widehat u$ and apply Lemma \ref{lem:greb} with $\veps=0$ to obtain
  \begin{equation}\label{cor:sn-reb}
    \begin{split}
      0 \leq \int_0^T \int_{\mathbb{T}^1} &\bigg[\partial_t \Phi \frac{1}{2} e^2 
      + \partial_x \Phi \cdot q(u|\widehat u)\bigg] \d x \d t \\
      &+ \int_0^T \int_{\mathbb{T}^1} \Phi \bigg[-r_1 e - \partial_x \widehat u \cdot f(u|\widehat u)\bigg] \d x \d t + \int_{\mathbb{T}^1} \Phi(0,\cdot) \frac{1}{2} e(0,\cdot)^2 \d x.
    \end{split}
  \end{equation}
  
  For the periodic domain $\mathbb{T}^1$, we choose the spatially constant test function
  \begin{equation*}
    \Phi_\delta(x,t) = \zeta_\delta(t)
  \end{equation*}
  where $\zeta_\delta$ is the temporal cutoff from \eqref{def:psidelta}.
  Since $\partial_x \Phi_\delta = 0$, the term involving $\partial_x \Phi \cdot q(u|\widehat u)$ vanishes and \eqref{cor:sn-reb} reduces to
  \begin{equation}\label{cor:sn-simple}
    0 \leq -\frac{1}{\delta} \int_s^{s+\delta} \int_{\mathbb{T}^1} \frac{1}{2} e^2 \d x \d t
    + \int_0^s \int_{\mathbb{T}^1} \bigg[-r_1 e - \partial_x \widehat u \cdot f(u|\widehat u)\bigg] \d x \d t
    + \int_{\mathbb{T}^1} \frac{1}{2} e(0,\cdot)^2 \d x.
  \end{equation} 
  
  The relative flux $f(u|\widehat u)$ simplifies to
  \begin{equation*}
    f(u|\widehat u) = \frac{1}{2}f''(\xi)e^2
  \end{equation*}
  for some $\xi$ between $u$ and $\widehat u$. 
  Using the bounds $a \leq \xi \leq b$, we have
  \begin{equation*}
    |f(u|\widehat u)| \leq \frac{1}{2}\sup_{v \in [a,b]} |f''(v)| \cdot e^2.
  \end{equation*}
  Therefore,
  \begin{equation*}
    \left|\int_0^s \int_{\mathbb{T}^1} \partial_x \widehat u \cdot f(u|\widehat u) \d x \d t\right|
    \leq \frac{1}{2}\sup_{v \in [a,b]} |f''(v)| \cdot \|\partial_x \widehat u\|_{\leb{\infty}(\mathbb{T}^1)}
    \int_0^s \|e(t)\|_{\leb{2}(\mathbb{T}^1)}^2 dt.
  \end{equation*}
  
  Taking $\delta \to 0$ in \eqref{cor:sn-simple}, for almost every $s \in (0,T)$, we obtain
  \begin{equation}\label{eq:gronwall-prep}
    \frac{1}{2}\|e(s)\|_{\leb{2}(\mathbb{T}^1)}^2 \leq \frac{1}{2}\|e(0)\|_{\leb{2}(\mathbb{T}^1)}^2
    + \int_0^s |r_1 \cdot e| dt
    + K[\widehat u] \int_0^s \|e(t)\|_{\leb{2}(\mathbb{T}^1)}^2 dt.
  \end{equation}
  where $K[\widehat u] := \sup_{v \in [a,b]} |f''(v)| \cdot \|\widehat u\|_{\leb{\infty}(0,T; \sob{1}{\infty}(\mathbb{T}^1))}$.  
  By Cauchy-Schwarz and Young's inequality,
  \begin{equation*}
    \int_0^s |r_1 \cdot e| dt \leq \|r_1\|_{\leb{1}(0,s; \leb{2}(\mathbb{T}^1))}^2 + \frac{1}{4}\|e\|_{\leb{\infty}(0,s; \leb{2}(\mathbb{T}^1))}^2.
  \end{equation*}
  
  Since the right-hand side of \eqref{eq:gronwall-prep} is non-decreasing in $s$, we obtain for any $t \in [0,T]$,
  \begin{equation*}
    \frac{1}{4}\|e\|_{\leb{\infty}(0,t; \leb{2}(\mathbb{T}^1))}^2 \leq \frac{1}{2}\|e(0)\|_{\leb{2}(\mathbb{T}^1)}^2
    + \|r_1\|_{\leb{1}(0,t; \leb{2}(\mathbb{T}^1))}^2
    + K[\widehat u] \int_0^t \|e(s)\|_{\leb{2}(\mathbb{T}^1)}^2 ds.
  \end{equation*}
  Multiplying by 4 and applying Gronwall's inequality yields
  \begin{equation*}
    \|e(t)\|_{\leb{2}(\mathbb{T}^1)}^2 \leq \Big(2\|e(0)\|_{\leb{2}(\mathbb{T}^1)}^2 + 4\|r_1\|_{\leb{1}(0,t; \leb{2}(\mathbb{T}^1))}^2\Big)
    \exp(4K[\widehat u]t).
  \end{equation*}
\end{proof}

\begin{remark}[Extension to non-linear systems]
  The techniques of Theorem \ref{the:sn} extend to non-linear systems with strictly convex entropy
  when $\D^2\eta(\vec u) \vec A(\vec u)$ is uniformly positive definite.
  In this case, the stability constants depend on the eigenvalues of $\D^2 \eta$.
  However, when the diffusion matrix is only semi-definite, additional techniques are needed
  to handle the degenerate directions, as demonstrated in Sections \ref{sec:lw} and \ref{sec:nw}.
\end{remark}

We now apply Theorem \ref{the:sn} and \ref{cor:sn} to derive a posteriori error bounds for the fully discrete RKdG approximation.

\begin{corollary}[Fully discrete a posteriori bound for non-linear scalar problems]
  \label{cor:nlsapost}
  Under the conditions of Theorem \ref{the:sn} with $\veps \geq 0$,
  suppose $\{u_h^i\}_{i=0}^N$, where $u_h^i \in \fes_q^s$, is an RKdG approximation
  with an (IMEX) RK temporal discretization
  of order at most 3, with temporal reconstruction $\ut$ (Definition \ref{def:grec})
  and space-time reconstruction $\uts$ (Definition \ref{def:str}),
  with residuals $r_1, r_2$ defined by \eqref{eq:r1}--\eqref{eq:r2}.
  For each $i = 0,\dots, N$, we have
  \begin{multline*}
      \Norm{u(t_i) - u_h^i}_{\leb{2}(\W)}^2
      +
      \veps \enorm{u - \ut}_{\leb{2}(0,t_i; \fes_q)}^2 \\
      \leq
      2 \Big(4\Norm{u(0) - \uts(0)}_{\leb{2}(\W)}^2
      + 16\Norm{r_1}_{\leb{1}(0,t_i;\leb{2}( \W))}^2
      + \veps8 \Norm{r_2}_{\leb{2}(0,t_i; \sobh{-1}(\W))}^2\Big) \exp{\qp{8  K[\uts]t_i}} \\
      +
      2\qp{\Norm{\uts(t_i) - u_h^i}_{\leb{2}(\W)}^2
      +
      \veps \enorm{\uts - \ut}_{\leb{2}(0,t_i; \fes_q)}^2}.
  \end{multline*}
\end{corollary}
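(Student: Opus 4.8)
The plan is to follow the proof of Corollary \ref{cor:lsapost} line by line, replacing the linear stability estimate (Theorem \ref{the:sl}) by its nonlinear counterpart (Theorem \ref{the:sn}), and using the space-time reconstruction $\uts$ as the intermediate comparison object between the exact solution $u$ and the discrete iterates $u_h^i$.

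First I would split the two error quantities using $\uts$. The triangle inequality and $(a+b)^2 \le 2(a^2+b^2)$ give
\[
\Norm{u(t_i) - u_h^i}_{\leb{2}(\W)}^2 \le 2\Norm{u(t_i) - \uts(t_i)}_{\leb{2}(\W)}^2 + 2\Norm{\uts(t_i) - u_h^i}_{\leb{2}(\W)}^2,
\]
and, in the dG energy norm,
\[
\enorm{u - \ut}_{\leb{2}(0,t_i;\fes_q)}^2 \le 2\enorm{u - \uts}_{\leb{2}(0,t_i;\fes_q)}^2 + 2\enorm{\uts - \ut}_{\leb{2}(0,t_i;\fes_q)}^2.
\]
The quantities $\Norm{\uts(t_i) - u_h^i}_{\leb{2}(\W)}^2$ and $\enorm{\uts - \ut}_{\leb{2}(0,t_i;\fes_q)}^2$ depend only on the discrete data and its reconstructions, hence are fully computable; they are carried unchanged to the right-hand side.

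Second, I would bound the remaining terms by applying Theorem \ref{the:sn} with $\widehat u = \uts$. This is admissible: by Lemma \ref{lem:space-time-reconst}, $\uts \in \operatorname{W}^{1,\infty}(0,T;\fes_{q+1}^s\cap C(\W,\reals^m))$ is Lipschitz in space and time, and by construction $\uts$ solves the perturbed equation \eqref{eq:cdrsn} with residual $r = r_1 + \veps r_2$ of exactly the regularity demanded in \eqref{eq:r1}--\eqref{eq:r2}, with initial datum $\uts(0)\in\sob{1}{\infty}(\W)$ and — by the reconstruction's consistency with the numerical flux — the same boundary data as $u$, so the hypotheses of Theorem \ref{the:sn} are met. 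Since $u$ and $\uts$ are both continuous across element interfaces, all jump terms vanish and $\enorm{u-\uts}_{\leb{2}(0,t_i;\fes_q)}^2 = \Norm{\partial_x(u-\uts)}_{\leb{2}((0,t_i)\times\W)}^2$, exactly as in the linear case. Theorem \ref{the:sn} then yields
\[
\Norm{u - \uts}_{\leb{\infty}(0,t_i;\leb{2}(\W))}^2 + 2\veps\enorm{u - \uts}_{\leb{2}(0,t_i;\fes_q)}^2 \le \Big(4\Norm{u(0)-\uts(0)}_{\leb{2}(\W)}^2 + 16\Norm{r_1}_{\leb{1}(0,t_i;\leb{2}(\W))}^2 + 8\veps\Norm{r_2}_{\leb{2}(0,t_i;\sobh{-1}(\W))}^2\Big)\exp{\qp{8K[\uts]t_i}},
\]
with $K[\uts]$ the constant of Theorem \ref{the:sn} evaluated at $\widehat u = \uts$; it is finite because $\Norm{\partial_x\uts}_{\leb{\infty}((0,T)\times\W)}$ is finite and the interval $[a,b]$ built from $u_0$ and $\uts(0)$ is bounded by the maximum principle.

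Finally, I would combine everything: substituting the last display into the two splittings, using $\Norm{u(t_i)-\uts(t_i)}_{\leb{2}(\W)}^2 \le \Norm{u-\uts}_{\leb{\infty}(0,t_i;\leb{2}(\W))}^2$ and $\veps\enorm{u-\ut}^2 \le 2\veps\enorm{u-\uts}^2 + 2\veps\enorm{\uts-\ut}^2$, and discarding the nonnegative surplus in the parabolic term so that $2\Norm{u-\uts}_{\leb{\infty}}^2 + 2\veps\enorm{u-\uts}^2$ is dominated by twice the left-hand side of the estimate of Theorem \ref{the:sn}, the claimed inequality falls out after absorbing the leading factor $2$. The only real work is the bookkeeping of the numerical constants through the halvings so that $\exp{\qp{8K[\uts]t_i}}$ multiplies exactly the stated data combination, together with the routine verification that the hypotheses of Theorem \ref{the:sn} genuinely transfer to $\widehat u = \uts$ — in particular that $\uts$ inherits the boundary data of $u$, so no extra boundary flux term enters. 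Beyond this there is no genuinely new analytic obstacle; all the difficulty is already contained in Theorem \ref{the:sn}.
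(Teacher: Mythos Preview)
Your proposal is correct and follows essentially the same approach as the paper's own proof, which simply states that one proceeds as in Corollary~\ref{cor:lsapost} using the triangle inequality with $\uts$ as intermediate and applying Theorems~\ref{the:sn} and~\ref{cor:sn} with $\widehat u = \uts$. The only minor omission is that for the case $\veps = 0$ one should invoke Theorem~\ref{cor:sn} rather than Theorem~\ref{the:sn}, but the argument is otherwise identical.
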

\begin{proof}
  The proof follows the same decomposition strategy as in Corollary \ref{cor:lsapost}, using the triangle inequality with the space-time reconstruction $\uts$ as an intermediate function. We apply Theorems \ref{the:sn} and \ref{cor:sn} with $\widehat u = \uts$ to obtain the desired bound.
\end{proof}

\section{Linear Systems}\label{sec:yl}

In this section, we consider the vector-valued case $\vec u \in \reals^m$ with linear flux 
$\vec f(\vec u) = \vec B \vec u$ for a constant matrix $\vec B \in \reals^{m \times m}$ 
and constant diffusion matrix $\vec A(\vec u) = \vec A \in \reals^{m \times m}$, where $\veps \geq 0$.
Since $\vec B$ is hyperbolic, it is diagonalizable with real eigenvalues. 
Without loss of generality (after transforming to Riemann invariants if necessary), 
we assume $\vec B$ is diagonal with entries $\{b_1,\dots, b_m\}$ ordered such that 
$b_i > 0$ for $1 \leq i \leq m_1$, 
$b_i = 0$ for $m_1 + 1 \leq i \leq m_2$, 
and $b_i < 0$ for $m_2 + 1 \leq i \leq m$,
where $0 \leq m_1 < m_2 \leq m$.
Thus, equations \eqref{eq:cd} and \eqref{eq:cdr} become
\begin{equation}\label{eq:cdyl}
  \partial_t \vec u + \vec B \partial_x \vec u = \veps \vec A \del_{xx} \vec u
\end{equation}
and
\begin{equation}\label{eq:cdryl}
  \partial_t \hatu + \vec B \partial_x \hatu = \veps \vec A \del_{xx} \hatu + \vec r
\end{equation}
where the residual $\vec r = \vec r_1 + \veps\vec r_2$ with $\vec r_1 \in \leb{1}(0,T;\leb{2}(\W;\reals^m))$ and $\vec r_2 \in \leb{2}(0,T;\sobh{-1}(\W;\reals^m))$.

\begin{remark}[Entropy]\label{rem:ent-yl} 
  For linear systems with constant matrix $\vec B$,
  the quadratic entropy
  \begin{equation}\label{def:els}
    \eta(\vec u)=\frac{1}{2} \Norm{\vec u}^2,
    \quad
    q(\vec u)=\frac{1}{2} \vec u \cdot \vec B \vec u
  \end{equation}
  provides a valid entropy/entropy-flux pair when $\vec B$ is symmetric (see \cite[Sec. 1.5]{Daf10}).
  Since we work with $\vec B$ in diagonal form, this symmetry condition is satisfied.
\end{remark}

\begin{remark}[Assumptions on the viscosity matrix]
  We assume the diffusion matrix $\vec A$ is positive definite, normalized
  so that its smallest eigenvalue equals $1$.
  This normalization allows us to bound $\Norm{\partial_x \vec u - \partial_x \hatu}_{\leb{2}(0,t;\leb{2}(\W))}$.
  Note that the positive semi-definite case, where $\vec A$ may have zero eigenvalues, requires a different treatment and is addressed in Section \ref{sec:lw}.
\end{remark}

\begin{theorem}[Relative entropy bound for linear systems]
  \label{the:yl}
  For $\veps>0$, suppose $\vec u \in \leb{2}(0,T; \sobh{1}(\W, \reals^m))$ is an entropy solution of \eqref{eq:cdyl}
  for the entropy \eqref{def:els}, with initial data
  $\vec u_0 \in \sobh{1}(\W, \reals^m)$ and Dirichlet boundary data
  $\vec g_D \in \sob{1}{\infty}((0,T)\times \partial \W, \reals^m)$.
  Let $\hatu \in \sob{1}{\infty}((0,T)\times \W, \reals^m)$ be a
  solution of \eqref{eq:cdryl} with initial data $\hatu_0 \in
  \sob{1}{\infty}(\W, \reals^m)$ and the same boundary data $\vec g_D$.
  Then for almost all $t \in (0,T)$, we have
  \begin{multline*}
    \frac 1 4 \Norm{\vec u - \hatu}^2_{\leb{\infty}(0,t;\leb{2}(\W))}
    +
    \frac{\veps}{2}
    \Norm{\partial_x \vec u - \partial_x \hatu}_{\leb{2}((0,t) \times \W)}^2
    \\
    \leq
    \Norm{\vec u_0 - \hatu_0}^2_{\leb{2}(\W)}
    +
    4\Norm{\vec r_1}_{\leb{1}(0,t;\leb{2}( \W))}^2
    +
    \veps \Norm{\vec r_2}_{\leb{2}(0,t; \sobh{-1}(\W))}^2.
  \end{multline*}
\end{theorem}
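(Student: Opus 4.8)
The plan is to follow verbatim the strategy of the proof of Theorem~\ref{the:sl}, since once the quadratic entropy is inserted the linear‑system case collapses onto the same computation. Write $\vec e := \vec u - \hatu$. For $\eta(\vec u) = \tfrac12\Norm{\vec u}^2$ we have $\D^2\eta \equiv \mathbb{I}$, hence $\D\eta(\vec u) - \D\eta(\hatu) = \D^2\eta(\hatu)(\vec u - \hatu) = \vec e$; the flux being linear, the relative flux vanishes, $\vec f(\vec u\,|\,\hatu) = \vec B\vec u - \vec B\hatu - \vec B\vec e = 0$; and since $\vec B$ is diagonal, hence symmetric, $q(\vec u\,|\,\hatu) = \tfrac12\,\vec e\cdot\vec B\vec e$. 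Because both $\D^2\eta$ and $\vec A$ are constant, the two compatibility identities stated just before Lemma~\ref{lem:boundary-est} hold trivially, so I may invoke the simplified form of the relative entropy inequality; together with $\partial_x(\D\eta(\hatu))\,\vec f(\vec u\,|\,\hatu) = 0$ it reduces to
\begin{multline*}
 0 \leq \int_0^T\!\!\int_\W \Big[ \partial_t\Phi\,\tfrac12|\vec e|^2 + \partial_x\Phi\big(\tfrac12\,\vec e\cdot\vec B\vec e - \veps\,\vec e\cdot\vec A\,\partial_x\vec e\big) + \Phi\big(-\veps\,\partial_x\vec e\cdot\vec A\,\partial_x\vec e - \vec r_1\cdot\vec e\big)\Big]\d x \d t \\
 - \veps\langle \vec r_2, \Phi\vec e\rangle + \int_\W \Phi(0,\cdot)\,\tfrac12|\vec e(0,\cdot)|^2\d x .
\end{multline*}
The hypotheses of Lemma~\ref{lem:greb} are met: $\veps\,\partial_x\vec e\cdot\vec A\,\partial_x\hatu \in \leb{2} \subset \leb{1}$ since $\partial_x\vec e\in\leb{2}$ and $\vec A\,\partial_x\hatu\in\leb{\infty}$, while $\Phi\vec e\in\leb{2}(0,T;\sobh{1}_0)$ pairs with $\vec r_2\in\leb{2}(0,T;\sobh{-1})$.

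Next I would insert the test function $\Phi_\delta(x,t) = \psi_\delta(x)\zeta_\delta(t)$ from \eqref{def:psidelta}. Since $\vec u$ and $\hatu$ share the Dirichlet datum $\vec g_D$, we have $\vec e\in\leb{2}(0,T;\sobh{1}_0(\W,\reals^m))$, so Lemma~\ref{lem:boundary-est} applies componentwise. Exactly as in \eqref{eq:claims}, the boundary‑layer terms produced by $\partial_x\Phi_\delta$ near $x=0$ and $x=1$ — of the form $\tfrac1\delta\int_0^s\!\int_0^\delta(\tfrac12\vec e\cdot\vec B\vec e - \veps\,\vec e\cdot\vec A\,\partial_x\vec e)$ and its mirror — vanish as $\delta\searrow0$ (first part of \eqref{boundarylemma} with $\zeta=e_i$ and $\xi = e_j$ or $\xi=\partial_x e_j$), while $\langle\vec r_2,\Phi_\delta\vec e\rangle$ converges to a quantity bounded by $\Norm{\vec r_2}_{\leb{2}(0,s;\sobh{-1}(\W))}\Norm{\partial_x\vec e}_{\leb{2}((0,s)\times\W)}$, via the splitting of $\Norm{\partial_x(\Phi_\delta\vec e)}_{\leb{2}}$ and the second part of \eqref{boundarylemma}. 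Passing to the limit leaves, for a.e.\ $s\in(0,T)$,
\begin{equation*}
 \tfrac12\Norm{\vec e(s)}_{\leb{2}(\W)}^2 + \veps\int_0^s\!\!\int_\W \partial_x\vec e\cdot\vec A\,\partial_x\vec e\,\d x \d t \leq \tfrac12\Norm{\vec e(0)}_{\leb{2}(\W)}^2 - \int_0^s\!\!\int_\W \vec r_1\cdot\vec e\,\d x \d t + \veps\Norm{\vec r_2}_{\leb{2}(0,s;\sobh{-1}(\W))}\Norm{\partial_x\vec e}_{\leb{2}((0,s)\times\W)} .
\end{equation*}

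Finally I would use the normalisation of $\vec A$ (smallest eigenvalue equal to $1$, equivalently $\vec v\cdot\vec A\vec v \geq |\vec v|^2$ for all $\vec v\in\reals^m$; replace $\vec A$ by its symmetric part if necessary, which alters none of the bilinear terms above) to get coercivity of the second term on the left. Applying Young's inequality on the right — peeling off $\tfrac\veps2\Norm{\partial_x\vec e}_{\leb{2}((0,s)\times\W)}^2$ to absorb the $\vec r_2$ term and $\tfrac18\Norm{\vec e}_{\leb{\infty}(0,s;\leb{2}(\W))}^2$ to absorb $\int\vec r_1\cdot\vec e$ — then splitting the resulting inequality into separate bounds for $\Norm{\vec e(s)}_{\leb{2}(\W)}^2$ and $\veps\Norm{\partial_x\vec e}_{\leb{2}((0,s)\times\W)}^2$, using that the right‑hand side is non‑decreasing in $s$, and rearranging, produces the claimed estimate; this last bookkeeping is identical to \eqref{slreb-int}--\eqref{t2_1}. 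I do not anticipate a genuine obstacle here: the only places needing any care are the (immediate) verification that the simplifying identities apply, since $\D^2\eta$ and $\vec A$ are constant, and the coercivity inequality $\vec v\cdot\vec A\vec v\geq|\vec v|^2$, which is precisely the assumed normalisation — and it is exactly this linearity that makes the estimate hold with no exponential Gronwall factor, in contrast to the nonlinear cases.
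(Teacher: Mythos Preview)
Your proposal is correct and follows essentially the same approach as the paper's proof: apply Lemma~\ref{lem:greb} with the quadratic entropy, exploit the linear simplifications (vanishing relative flux, $\D^2\eta=\mathbb{I}$), insert $\Phi_\delta$, eliminate boundary layers via Lemma~\ref{lem:boundary-est}, use the normalisation of $\vec A$ for coercivity, and finish with the same Young/split/sup/rearrange bookkeeping as in \eqref{slreb-int}--\eqref{t2_1}. The paper's argument is identical in substance; your remark about passing to the symmetric part of $\vec A$ is a harmless aside the paper omits.
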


\begin{proof}
  The entropy pair \eqref{def:els} yields
  \begin{equation}\label{def:reyl} 
    \eta(\vec u|\hatu) =\frac{1}{2} \Norm{\vec u - \hatu}^2, \qquad
      q(\vec u|\hatu) =\frac{1}{2}  (\vec u - \hatu) \cdot \vec B (\vec u - \hatu).
  \end{equation}
  Since the system is linear, we have
  \begin{align*}
      \D \eta(\vec u) - \D \eta (\hatu) &= \vec u - \hatu,\\
      \D\eta(\vec u) - \D\eta(\hatu)  - \D^2 \eta(\hatu)(\vec u- \hatu) &=0,\\
      \vec f(\vec u) -\vec f(\hatu) - \D\vec f(\hatu) (\vec u- \hatu) &=0.
  \end{align*}
  These relations significantly simplify the application of Lemma \ref{lem:greb}. Setting $\vec e := \vec u - \hatu$, we obtain
  \begin{align}
      0 \leq  \int_0^T \int_\W &\Big[
      \partial_t \Phi \eta(\vec u| \hatu) + \partial_x \Phi \qp{q(\vec u|\hatu)
      -\veps \vec e \cdot \vec A \partial_x \vec e} + \Phi\qp{  - \veps \partial_x \vec e \cdot
      \vec A \partial_x \vec e
      - \vec r_1 \cdot \vec e}
      \Big]\d x \d t \nonumber\\
      & \qquad\qquad - \veps\langle \vec r_2 , \Phi \vec e\rangle
      + \int_\W
      \Phi(0,\cdot) \eta(\vec u_0| \hatu(0,\cdot))\d x. \nonumber
  \end{align}
  We now substitute the test function $\Phi_\delta$ from \eqref{def:psidelta} to obtain
  \begin{multline}\label{reb-yl2}
      0 \leq  - \frac 1 \delta \int_s^{s+ \delta} \int_\W \eta(\vec u| \hatu) \d x \d t
       +\frac 1 \delta \int_0^{s} \int_0^{\delta} \qp{q(\vec u|\hatu)
      -\veps \vec e \cdot \vec A \partial_x \vec e} \d x \d t\\
      - \frac 1 \delta \int_0^{s} \int_{1- \delta}^1 \qp{q(\vec u|\hatu)
      -\veps \vec e \cdot \vec A \partial_x \vec e} \d x \d t
       +\int_0^s \int_\W  \qp{  - \veps \partial_x \vec e \cdot
      \vec A \partial_x \vec e
      - \vec r_1 \cdot \vec e} \d x \d t \\
       - \veps\langle \vec r_2 , \Phi_\delta \vec e\rangle
      + \int_\W
      \eta(\vec u_0| \hatu(0,\cdot))\d x + \mathcal{O}(\delta)
  \end{multline}
  where $\mathcal{O}(\delta)$ represents integrals over $\delta^2$ regions with $\delta^{-1}$ scaling.

  Taking $\delta \to 0$ in \eqref{reb-yl2} and using the first part of \eqref{boundarylemma}, we have
  \begin{equation}\label{sysboundary}
    \begin{split}
      & \lim_{\delta \rightarrow 0} \frac 1 \delta \int_0^{s} \int_0^{\delta} \qp{q(\vec u|\hatu)
      -\veps \vec e \cdot \vec A \partial_x \vec e} \d x \d t =0 ,\\
      & \lim_{\delta \rightarrow 0} \frac 1 \delta \int_0^{s} \int_{1-\delta}^1 \qp{q(\vec u|\hatu)
      -\veps \vec e \cdot \vec A \partial_x \vec e} \d x \d t =0.
    \end{split}
  \end{equation}
  Thus, the duality pairing satisfies
  \begin{equation}\label{sysr2}
    \lim_{\delta \to 0} |\langle \vec r_2 , \Phi_\delta \vec e\rangle| \leq
    \Norm{\vec r_2}_{\leb{2}(0,s; \sobh{-1}(\W))}
    \norm{\vec e}_{\leb{2}(0,s; \sobh{1}(\W))} .
  \end{equation}
  Combining these results, we obtain
  \begin{multline}\label{reb-yl_teval}
    \int_\W \eta(\vec u(s,\cdot)| \hatu(s,\cdot)) \d x   +
    \veps\int_0^s \int_\W |\partial_x \vec e|^2 \d x \d t
    \\
    \leq   \int_0^s \int_\W
    |\vec r_1| |\vec e|
    \d x \d t
    + \veps \Norm{\vec r_2}_{\leb{2}(0,s; \sobh{-1}(\W))}
    \Norm{\vec e}_{\leb{2}(0,s; \sobh{1}(\W))}
    + \int_\W
    \eta(\vec u_0| \hatu(0,\cdot))\d x
  \end{multline}
  where we used the normalization of $\vec A$.
  
  Applying Cauchy-Schwarz and Young's inequalities to \eqref{reb-yl_teval}, we obtain
  \begin{equation}
    \label{reb-yl-int}
    \begin{split}
      &\frac 12 \Norm{\vec e(s)}_{\leb{2}(\W)}^2
      +
      \frac{\veps}{2}
      \Norm{\partial_x \vec e}_{\leb{2}((0,s)\times \W)}^2 \\
      &\qquad \leq
      \frac 12 \Norm{\vec e(0)}_{\leb{2}(\W)}^2
      +
      2\Norm{\vec r_1}_{\leb{1}(0,s; \leb{2}(\W))}^2
      +
      \frac 18 \Norm{\vec e}_{\leb{\infty}(0,s;\leb{2}( \W))}^2
      +
      \frac{\veps}{2} \Norm{\vec r_2}_{\leb{2}(0,s; \sobh{-1}(\W))}^2.
    \end{split}
  \end{equation}
  Splitting \eqref{reb-yl-int} yields two estimates,
  \begin{equation}
    \label{reb-yl-int1}
    \begin{split}
      \frac \veps 2
      \Norm{\partial_x \vec e(s)}_{\leb{2}((0,s)\times \W)}^2
      &\leq
      \frac 12 \Norm{\vec e(0)}_{\leb{2}(\W)}^2
      +
      2\Norm{\vec r_1}_{\leb{1}(0,s; \leb{2}(\W))}^2
      +
      \frac 18 \Norm{\vec e}_{\leb{\infty}(0,s;\leb{2}( \W))}^2
      +
      \frac{\veps}{2} \Norm{\vec r_2}_{\leb{2}(0,s; \sobh{-1}(\W))}^2
    \end{split}
  \end{equation}
  and
  \begin{equation}
    \label{reb-yl-int2}
    \begin{split}
      \frac 12 \Norm{\vec e(s)}_{\leb{2}(\W)}^2
      &\leq
      \frac 12 \Norm{\vec e(0)}_{\leb{2}(\W)}^2
      +
      2\Norm{\vec r_1}_{\leb{1}(0,s; \leb{2}(\W))}^2
      +
      \frac 1 8 \Norm{\vec e}_{\leb{\infty}(0,s;\leb{2}( \W))}^2 +
      \frac{\veps}{2} \Norm{\vec r_2}_{\leb{2}(0,s; \sobh{-1}(\W))}^2 .
    \end{split}
  \end{equation}
  Since the right-hand side of \eqref{reb-yl-int2} is non-decreasing in $s$,
  \begin{equation}
    \label{reb-yl-int3}
    \begin{split}
      \frac 12 \Norm{\vec e}_{\leb{\infty}(0,s;\leb{2}( \W))}^2
      &\leq
      \frac 12 \Norm{\vec e(0)}_{\leb{2}(\W)}^2
      +
      2\Norm{\vec r_1}_{\leb{1}(0,s; \leb{2}(\W))}^2
      + \frac 1 8 \Norm{\vec e}_{\leb{\infty}(0,s;\leb{2}( \W))}^2
      +
      \frac{\veps}{2} \Norm{\vec r_2}_{\leb{2}(0,s; \sobh{-1}(\W))}^2.
    \end{split}
  \end{equation}
  Adding \eqref{reb-yl-int1} and \eqref{reb-yl-int3} yields
  \begin{equation}\nonumber
    \begin{split}
      &\frac 14 \Norm{\vec e}_{\leb{\infty}(0,s;\leb{2}( \W))}^2 + \frac \veps 2
      \Norm{\partial_x \vec e(s)}_{\leb{2}((0,s)\times \W)}^2
      \leq
      \Norm{\vec e(0)}_{\leb{2}(\W)}^2
      +
      4\Norm{\vec r_1}_{\leb{1}(0,s; \leb{2}(\W))}^2
      +
      \veps \Norm{\vec r_2}_{\leb{2}(0,s; \sobh{-1}(\W))}^2.
    \end{split}
  \end{equation}
  This establishes the desired stability estimate for arbitrary $s \in (0,T)$.
\end{proof}

\begin{theorem}[Relative entropy bound for inviscid linear systems]
  \label{cor:yl}
  For the inviscid case $\veps=0$, suppose $\vec u \in \leb{\infty}((0,T) \times
  \W, \reals^m)$ is an entropy solution of \eqref{eq:cdyl} with respect to the
  entropy \eqref{def:els}, with initial data $\vec u_0
  \in \leb{\infty}(\W, \reals^m)$ and
  Lipschitz-continuous boundary data $\vec g_D$ prescribed for incoming characteristics.
  Let $\hatu \in
  \sob{1}{\infty}((0,T)\times \W, \reals^m)$ be a solution of
  \eqref{eq:cdryl} with initial data $\hatu_0 \in
  \sob{1}{\infty}(\W, \reals^m)$ and the same boundary data $\vec g_D$. Then for almost all $t \in (0,T)$, we have
  \begin{equation*}
    \frac 1 4\Norm{\vec u - \hatu}^2_{\leb{\infty}(0,t;\leb{2}(\W))}
    \leq
    \frac 1 2 \Norm{\vec u_0 - \hatu_0}^2_{\leb{2}(\W)}
    +
    \Norm{\vec r_1}_{\leb{1}(0,t;\leb{2}( \W))}^2.
  \end{equation*}
\end{theorem}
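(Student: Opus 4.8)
The plan is to mirror the proof of Theorem~\ref{cor:sl}, specialising Lemma~\ref{lem:greb} to $\veps=0$ and the quadratic entropy~\eqref{def:els}. Since the flux is linear, $\vec f(\vec u\,|\,\hatu)=\vec 0$, so the term $\partial_x(\D\eta(\hatu))\vec f(\vec u\,|\,\hatu)$ disappears, while $\eta(\vec u\,|\,\hatu)=\tfrac12\Norm{\vec u-\hatu}^2$ and $q(\vec u\,|\,\hatu)=\tfrac12(\vec u-\hatu)\cdot\vec B(\vec u-\hatu)$ as in~\eqref{def:reyl}. Writing $\vec e:=\vec u-\hatu$, Lemma~\ref{lem:greb} then collapses to
\begin{equation*}
  0\leq\int_0^T\int_\W \partial_t\Phi\,\tfrac12\Norm{\vec e}^2+\partial_x\Phi\,\tfrac12\vec e\cdot\vec B\vec e-\Phi\,\vec r_1\cdot\vec e\d x\d t+\int_\W\Phi(0,\cdot)\,\tfrac12\Norm{\vec e(0,\cdot)}^2\d x .
\end{equation*}

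Next I would test with $\Phi_\delta(x,t)=\psi_\delta(x)\zeta_\delta(t)$ from~\eqref{def:psidelta} and let $\delta\to0$; apart from corner terms of order $\delta$ that vanish trivially, the only delicate point is the pair of boundary-layer integrals produced by $\partial_x\psi_\delta$. With $\vec B=\operatorname{diag}(b_1,\dots,b_m)$ these are $\tfrac1\delta\int_0^s\int_0^\delta\tfrac12\sum_i b_ie_i^2$ at the left endpoint and $-\tfrac1\delta\int_0^s\int_{1-\delta}^1\tfrac12\sum_i b_ie_i^2$ at the right endpoint, with the $b_i=0$ components not entering at all. For each component with $b_i>0$ the left endpoint is an inflow boundary, where $\vec u$ and $\hatu$ carry the same Lipschitz data, so $e_i$ vanishes there and, by the $\leb\infty$ bound on $\vec u$ together with Lipschitz boundary data, is Lipschitz inside the cone $\{x<b_it\}$; splitting the $t$-integral at $t=x/b_i$ exactly as in~\eqref{eq:hypboundary} shows this contribution tends to $0$. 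The components with $b_i<0$ in the left layer enter with a non-positive sign and are simply discarded; a symmetric statement, with the two endpoints interchanged, disposes of the right layer. This boundary bookkeeping, identifying family by family which contributions have the favourable sign and which vanish in the limit, is the step I expect to be the main obstacle.

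Once the layers are removed, the limit yields, for almost every $s\in(0,T)$,
\begin{equation*}
  \tfrac12\Norm{\vec e(s)}_{\leb2(\W)}^2\leq\tfrac12\Norm{\vec e(0)}_{\leb2(\W)}^2-\int_0^s\int_\W\vec r_1\cdot\vec e\d x\d t ,
\end{equation*}
and from here I would conclude exactly as in Theorems~\ref{the:sl} and~\ref{cor:sl}: bound $\int_0^s\int_\W\vec r_1\cdot\vec e$ via Cauchy--Schwarz in space and Young's inequality in time against $\Norm{\vec r_1}_{\leb1(0,s;\leb2(\W))}^2+\tfrac14\Norm{\vec e}_{\leb\infty(0,s;\leb2(\W))}^2$, take the supremum over $s\in(0,t)$ using monotonicity of the right-hand side, and absorb the $\tfrac14\Norm{\vec e}_{\leb\infty(0,t;\leb2(\W))}^2$ term into the left-hand side. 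No Gronwall step is needed: the relative entropy flux $\tfrac12\vec e\cdot\vec B\vec e$ is exactly bilinear in $\vec e$, which is precisely the structural feature responsible for the absence of an exponential factor in the linear case.
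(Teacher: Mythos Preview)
Your proposal is correct and follows essentially the same route as the paper's proof: specialise Lemma~\ref{lem:greb} to $\veps=0$ with the quadratic entropy, test with $\Phi_\delta$, handle the boundary layers component-by-component by discarding the sign-definite outflow contributions and using the Lipschitz inflow data to kill the incoming ones, then close with Cauchy--Schwarz/Young and a supremum. The paper's boundary argument is phrased slightly more abstractly via the decomposition $q(\vec u\,|\,\hatu)=q_1+q_2$ with $q_1\geq0$, $q_2\leq0$, but the content is identical to your family-by-family bookkeeping.
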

\begin{proof}
    In the case $\veps=0$, inequality \eqref{reb-yl2} simplifies to
  \begin{multline}\label{reb-yl2-hyp}
    0 \leq  - \frac 1 \delta \int_s^{s+ \delta} \int_\W \eta(\vec u| \hatu) \d x \d t
    +\frac 1 \delta \int_0^{s} \int_0^{ \delta} q(\vec u|\hatu) 
    \d x \d t
    - \frac 1 \delta \int_0^{s} \int_{1- \delta}^1 q(\vec u|\hatu) 
    \d x \d t\\
    + \int_0^s \int_\W
    - \vec r_1 \cdot (\vec u - \hatu)
    \d x \d t
    + \int_\W
    \eta(\vec u_0| \hatu(0,\cdot))\d x .
  \end{multline} 
  It remains to verify
  \begin{equation}\label{linsyshypboundary1}
    \lim_{\delta \rightarrow 0}
    \frac 1 \delta \int_0^{s} \int_0^{ \delta} q(\vec u|\hatu)\d x \d t \leq 0
    \quad \text{and } \quad 
    \lim_{\delta \rightarrow 0}  \frac 1 \delta \int_0^{s} \int_{1- \delta}^1 q(\vec u|\hatu) 
    \d x \d t  \geq 0 .
  \end{equation}
  Since $q(\vec u|\hatu) = \frac{1}{2}(\vec u-\hatu) \cdot \vec B(\vec u-\hatu)$ and $\vec B$ is diagonal with entries $b_i$,
  we have the decomposition
  \begin{equation*}
    q(\vec u|\hatu) = \underbrace{\frac{1}{2}\sum_{i=1}^{m_1} b_i (u_i - \widehat{u}_i)^2}_{=: q_1 \geq 0}
    +
    \underbrace{\frac{1}{2}\sum_{i=m_2 +1}^{m} b_i (u_i - \widehat{u}_i)^2}_{=: q_2\leq 0}.
  \end{equation*}
  Hence, to show \eqref{linsyshypboundary1}, it suffices to verify 
  \begin{equation}\label{linsyshypboundary2}
    \lim_{\delta \rightarrow 0}
    \frac 1 \delta \int_0^{s} \int_0^{ \delta} q_1 \d x \d t= 0
    \quad \text{and } \quad 
    \lim_{\delta \rightarrow 0} \frac 1 \delta \int_0^{s} \int_{1- \delta}^1 q_2
    \d x \d t  = 0.
  \end{equation}
  The term $q_1$ involves characteristic variables with positive eigenvalues,
  which satisfy Dirichlet conditions at the left boundary.
  Since both solutions share the same boundary data, we have
  \begin{equation}\nonumber
    q_1(s,0)=0.
  \end{equation}
  Moreover, $q_1$ is Lipschitz continuous near the left boundary.
  Therefore,
  \begin{equation*}
    \lim_{\delta \rightarrow 0}
    \frac 1 \delta \int_0^{s} \int_0^{ \delta} q_1  \d x \d t  = 0.
  \end{equation*}
  By an analogous argument at the right boundary, where $q_2$ involves characteristic variables
  with negative eigenvalues, we have
  \begin{equation*}
    \lim_{\delta \rightarrow 0} \frac 1 \delta \int_0^{s} \int_{1- \delta}^1 q_2
    \d x \d t  = 0.
  \end{equation*}
  These limits verify \eqref{linsyshypboundary1}.
  Passing to the limit $\delta \to 0$ in \eqref{reb-yl2-hyp}, we have
  \begin{equation}
    \label{reb-yl-int-hyp}
    \begin{split}
      \frac 12 \Norm{\vec e(s)}_{\leb{2}(\W)}^2
      &=
      \frac 12\Norm{\vec e(0)}_{\leb{2}(\W)}^2
      - \int_0^s \int_\W \vec r_1 \cdot \vec e \d x \d s
      \\
      &\leq
      \frac 12 \Norm{\vec e(0)}_{\leb{2}(\W)}^2
      +
      \Norm{\vec r_1}_{\leb{1}(0,s;\leb{2}(\W))}^2
      +
      \frac 14 \Norm{\vec e}_{\leb{\infty}(0,s;\leb{2}(\W))}^2
      .
    \end{split}
  \end{equation}
  Since the right-hand side is non-decreasing in $s$, taking the supremum over $(0,s)$ on the left yields the stated bound.
\end{proof}

We now apply Theorems \ref{the:yl} and \ref{cor:yl} to derive a posteriori error bounds for the fully discrete RKdG approximation.

\begin{corollary}[Fully discrete a posteriori bound for linear systems]
  \label{cor:lsysapost}
  Under the conditions of Theorem
  \ref{the:yl} with $\veps \geq 0$, suppose $\{\vec u_h^i\}_{i=0}^N$, where $\vec u_h^i \in (\fes_q^s)^m$, is
  an RKdG approximation with an (IMEX) RK
  temporal discretization of order at most 3, with temporal reconstruction $\vut$ (Definition \ref{def:grec})
  and space-time reconstruction $\vuts$ (Definition \ref{def:str}),
  with residuals $\vec r_1, \vec r_2$ defined by \eqref{eq:r1}--\eqref{eq:r2}.
  For each $i = 0,\dots, N$, we have
  \begin{multline*}
    \frac 1 4      \Norm{\vec u(t_i) - \vec u_h^i}_{\leb{2}(\W)}^2
    +
    \frac{\veps}{2} \enorm{\vec u - \vut}_{\leb{2}(0,t_i; \fes_q)}^2
    \\
    \leq
    2 \Big(\Norm{\vec u(0) - \vuts(0)}_{\leb{2}(\W)}^2
    + 4\Norm{\vec r_1}_{\leb{1}(0,t_i;\leb{2}( \W))}^2
    + \veps \Norm{\vec r_2}_{\leb{2}(0,t_i; \sobh{-1}(\W))}^2\Big)
    \\+
    \frac 1 2  \Norm{\vuts(t_i) - \vec u_h^i}_{\leb{2}(\W)}^2
    +
    \veps \enorm{\vuts - \vut}_{\leb{2}(0,t_i; \fes_q)}^2
  \end{multline*}
\end{corollary}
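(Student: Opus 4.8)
The plan is to follow the strategy already used in the proofs of Corollaries \ref{cor:lsapost} and \ref{cor:nlsapost}. The key observation is that, by Definition \ref{def:str} and Lemma \ref{lem:space-time-reconst}, the space-time reconstruction $\vuts$ is a function in $\sob{1}{\infty}((0,T)\times\W,\reals^m)$, Lipschitz continuous in space, which solves the perturbed system \eqref{eq:cdryl} with residual $\vec r = \vec r_1 + \veps\vec r_2$ decomposed as in \eqref{eq:r1}--\eqref{eq:r2}, and which inherits the initial and boundary data required to apply Theorem \ref{the:yl} (respectively Theorem \ref{cor:yl} when $\veps = 0$). Thus $\vuts$ can serve as the intermediate Lipschitz solution $\hatu$ in a triangle-inequality splitting of the true discrete error.

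First I would split, for each $i = 0,\dots,N$, using the triangle inequality together with $(a+b)^2 \le 2(a^2+b^2)$,
\begin{equation*}
  \Norm{\vec u(t_i) - \vec u_h^i}_{\leb{2}(\W)}^2 \le 2\Norm{\vec u(t_i) - \vuts(t_i)}_{\leb{2}(\W)}^2 + 2\Norm{\vuts(t_i) - \vec u_h^i}_{\leb{2}(\W)}^2,
\end{equation*}
and likewise, for the dG energy norm,
\begin{equation*}
  \enorm{\vec u - \vut}_{\leb{2}(0,t_i;\fes_q)}^2 \le 2\enorm{\vec u - \vuts}_{\leb{2}(0,t_i;\fes_q)}^2 + 2\enorm{\vuts - \vut}_{\leb{2}(0,t_i;\fes_q)}^2.
\end{equation*}
Since $\vec u$ and $\vuts$ are both continuous across element interfaces, all jump contributions in $\enorm{\cdot}$ applied to $\vec u - \vuts$ vanish, so $\enorm{\vec u - \vuts}_{\leb{2}(0,t_i;\fes_q)}^2 = \Norm{\partial_x(\vec u - \vuts)}_{\leb{2}((0,t_i)\times\W)}^2$, which is exactly the seminorm controlled by Theorem \ref{the:yl}.

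Next I would apply Theorem \ref{the:yl} with $\hatu = \vuts$ (or Theorem \ref{cor:yl} when $\veps = 0$, in which case the gradient term and the $\vec r_2$ term are simply absent) to bound the reconstruction-versus-exact contributions by
\begin{equation*}
  \tfrac14\Norm{\vec u - \vuts}_{\leb{\infty}(0,t_i;\leb{2}(\W))}^2 + \tfrac{\veps}{2}\enorm{\vec u - \vuts}_{\leb{2}(0,t_i;\fes_q)}^2 \le \Norm{\vec u(0) - \vuts(0)}_{\leb{2}(\W)}^2 + 4\Norm{\vec r_1}_{\leb{1}(0,t_i;\leb{2}(\W))}^2 + \veps\Norm{\vec r_2}_{\leb{2}(0,t_i;\sobh{-1}(\W))}^2 .
\end{equation*}
Substituting this into the two splittings above, using $\Norm{\vec u(t_i)-\vuts(t_i)}_{\leb{2}(\W)} \le \Norm{\vec u - \vuts}_{\leb{\infty}(0,t_i;\leb{2}(\W))}$ and collecting constants, then yields the claimed estimate.

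I do not expect a genuine obstacle here: the argument is essentially bookkeeping, with the reconstruction doing all the analytic work. The one point requiring a line of care is keeping the numerical constants consistent between the $\veps > 0$ case (Theorem \ref{the:yl}) and the $\veps = 0$ case (Theorem \ref{cor:yl}, which carries a $\tfrac12$ on the initial-data term and no $\vec r_2$ contribution); the stated bound is simply a common upper bound valid in both regimes, which accounts for the mildly lossy factor in front of the data and residual terms.
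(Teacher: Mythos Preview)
Your proposal is correct and follows essentially the same approach as the paper: split via the triangle inequality with $\vuts$ as intermediate, apply Theorem \ref{the:yl} (or Theorem \ref{cor:yl} for $\veps=0$) with $\hatu=\vuts$, and collect constants. The paper's proof is a two-sentence reference to precisely this decomposition strategy from Corollary \ref{cor:lsapost}.
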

\begin{proof}
  The proof follows the same decomposition strategy as in Corollary \ref{cor:lsapost},
  using the triangle inequality with the space-time reconstruction as an intermediate function.
  We apply Theorem \ref{the:yl} and \ref{cor:yl} with $ \hatu = \vuts$ to obtain the desired bound.
\end{proof}

\section{The linear wave equation with degenerate diffusion}
\label{sec:lw}
In this section, we consider the linear system with degenerate diffusion,
where the diffusion operator acts only on one of the two state variables:
\begin{equation}\label{eq:lw}
  \begin{split}
    &\partial_t u - \partial_x v = 0,\\
    &\partial_t v - \partial_x u = \veps \del_{xx} v.
  \end{split}
\end{equation}
This system is a special case of the general convection-diffusion system \eqref{eq:cd}
with state vector $\vec u = (u,v)^T$ and constant matrices
\begin{equation*}
  \vec B =
  \begin{pmatrix}
    0 & -1 \\ -1 & 0
  \end{pmatrix} \quad \text{and} \quad
  \vec A =
  \begin{pmatrix}
    0 & 0 \\ 0 & 1
  \end{pmatrix}.
\end{equation*}
Note that this corresponds to the damped wave
equation
\begin{equation*}
  \del_{tt} u - \del_{xx} u = \veps \del_{txx} u.
\end{equation*}

We focus on the viscous case $\veps>0$, as the inviscid case falls within
the framework of Section \ref{sec:yl} and need not be repeated here.
\begin{remark}[Boundary conditions]\label{rem:bc-lw}
  For the degenerate diffusion system \eqref{eq:lw}, the mixed hyperbolic-parabolic
  nature makes the specification of boundary conditions particularly delicate.
  The diffusion on $v$ requires Dirichlet conditions, while the hyperbolic structure
  demands careful treatment of inflow/outflow boundaries.
  To avoid these technical complications,
  we restrict our attention to periodic boundary conditions on $\mathbb{T}^1$ throughout this section.
\end{remark}

Consider the perturbed system with approximate solution $\widehat{\vec u} = \qp{\widehat u, \widehat v}^T$:
\begin{equation}\label{eq:lwr}
  \begin{split}
    \partial_t \widehat u - \partial_x \widehat v
    &=
    r_u,
    \\
    \partial_t \widehat v - \partial_x \widehat u
    &=
    \veps \del_{xx} \widehat v + r_{v,1} + \veps r_{v,2}
  \end{split}
\end{equation}
where $r_u, r_{v,1} \in \leb{2}((0,T) \times \mathbb{T}^1)$ and $r_{v,2} \in
\leb{2}(0,T; \sobh{-1}(\mathbb{T}^1))$ are residuals.

\begin{remark}[Entropy]\label{rem:ent-lw}
  While any function $\eta\in \cont{2}(\reals^2, \reals)$ with
  $\eta_{uu}=\eta_{vv}$ yields an entropy for \eqref{eq:lw},
  we adopt the quadratic entropy-flux pair
  \begin{equation}\label{eq:lw-entropy}
    \eta(u,v)= \frac{1}{2} u^2 + \frac{1}{2} v^2, \quad q(u,v) = -uv.
  \end{equation}
  This represents the mechanical energy and its flux of the system.
  This choice provides stronger estimates than other entropy pairs.
\end{remark}

\begin{theorem}[Relative entropy bound for viscous wave equation]\label{the:lw}
  For $\veps>0$, suppose $\vec u = (u,v)^T$ with $u \in \leb{2}((0,T) \times \mathbb{T}^1)$
  and $v \in \leb{2}(0,T; \sobh{1}(\mathbb{T}^1))$ is an entropy solution of \eqref{eq:lw}
  with respect to the entropy \eqref{eq:lw-entropy}, with initial data
  $\vec u_0 \in \sobh{1}(\mathbb{T}^1)^2$ and periodic boundary conditions.  Let $\widehat{\vec u} \in (\sob{1}{\infty}((0,T)\times \mathbb{T}^1))^2$ be a solution
  of \eqref{eq:lwr} with initial data $\widehat{\vec u}_0 \in
  \sob{1}{\infty}(\mathbb{T}^1,\reals^2)$ and periodic boundary conditions. Then for almost all $t \in (0,T)$, we have
  \begin{multline*}
    \frac 1 2\Norm{\vec u - \widehat{\vec u}}_{\leb{\infty}(0,t;\leb{2}(\mathbb{T}^1))}^2 + \frac \veps 2
    \Norm{\partial_x (v - \widehat v)}_{\leb{2}(0,t;\leb{2}(\mathbb{T}^1))}^2
    \leq
    2\Norm{\vec u_0 - \widehat{\vec u}_0}_{\leb{2}(\mathbb{T}^1)}^2
    + 2\Norm{r_u}_{\leb{1}(0,t;\leb{2}(\mathbb{T}^1))}^2 \\
    + 2\Norm{r_{v,1}}_{\leb{1}(0,t;\leb{2}(\mathbb{T}^1))}^2
    + 2\veps \Norm{r_{v,2}}_{\leb{2}(0,t;\sobh{-1}(\mathbb{T}^1))}^2.
  \end{multline*}
\end{theorem}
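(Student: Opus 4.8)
The plan is to follow the template of the proof of Theorem~\ref{the:yl}, adapted to the periodic geometry and to the fact that the diffusion matrix $\vec A=\operatorname{diag}(0,1)$ is here only positive \emph{semi}-definite. First I would apply the relative entropy inequality of Lemma~\ref{lem:greb} with the quadratic entropy pair~\eqref{eq:lw-entropy}. Linearity gives several simplifications: since $\eta$ is quadratic, $\D^2\eta$ is the $2\times2$ identity, so $\D\eta(\vec u)-\D\eta(\widehat{\vec u})=\D^2\eta(\widehat{\vec u})(\vec u-\widehat{\vec u})=\vec e$ with $\vec e:=\vec u-\widehat{\vec u}=(u-\widehat u,\,v-\widehat v)^T$; since $\vec f(\vec u)=\vec B\vec u$ is linear, the relative flux $\vec f(\vec u|\widehat{\vec u})$ vanishes; and since $\vec A$ is constant, the two identities preceding Lemma~\ref{lem:boundary-est} hold trivially, so Lemma~\ref{lem:greb} reduces to its simplified form, whose hypotheses follow at once from $v\in\leb{2}(0,T;\sobh{1}(\mathbb{T}^1))$, $\widehat{\vec u}\in\sob{1}{\infty}$ and $r_{v,2}\in\leb{2}(0,T;\sobh{-1}(\mathbb{T}^1))$. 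Using $\vec A\,\partial_x\vec e=(0,\,\partial_x(v-\widehat v))^T$, the viscous flux term reduces to $-\veps(v-\widehat v)\partial_x(v-\widehat v)$, the viscous volume term to $-\veps(\partial_x(v-\widehat v))^2$, the relative entropy flux to $q(\vec u|\widehat{\vec u})=-(u-\widehat u)(v-\widehat v)$, and, from the residual splitting in~\eqref{eq:lwr}, the hyperbolic residual is $\vec r_1=(r_u,\,r_{v,1})^T$ while the parabolic residual $\vec r_2=(0,\,r_{v,2})^T$ is carried by the $v$-component alone.

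Next I would test with $\Phi_\delta(x,t)=\zeta_\delta(t)$, i.e.\ the temporal cut-off from~\eqref{def:psidelta} taken constant in $x$, exactly as in the proof of Theorem~\ref{cor:sn}. Since $\partial_x\Phi_\delta\equiv0$ on $\mathbb{T}^1$, every term carrying $\partial_x\Phi$ — in particular the relative entropy flux and the viscous flux — drops out, so no boundary layer analysis is needed. Passing $\delta\to0$ (using that $s$ is a Lebesgue point of $t\mapsto\Norm{\vec e(t)}_{\leb{2}}^2$, together with dominated convergence in the $\sobh{-1}$ pairing) yields, for a.e.\ $s\in(0,T)$,
\begin{multline*}
  \tfrac12\Norm{\vec e(s)}_{\leb{2}(\mathbb{T}^1)}^2+\veps\Norm{\partial_x(v-\widehat v)}_{\leb{2}((0,s)\times\mathbb{T}^1)}^2
  \leq\tfrac12\Norm{\vec e(0)}_{\leb{2}(\mathbb{T}^1)}^2 \\
  +\int_0^s\!\!\int_{\mathbb{T}^1}\!\big(|r_u||u-\widehat u|+|r_{v,1}||v-\widehat v|\big)\d x\d t
  +\veps\big|\langle r_{v,2},\,v-\widehat v\rangle\big|,
\end{multline*}
the pairing being over $\leb{2}(0,s;\sobh{-1}(\mathbb{T}^1))\times\leb{2}(0,s;\sobh{1}_0(\mathbb{T}^1))$.

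To close the estimate I would bound the right-hand side: for the two lower-order residual terms, $\int_0^s\!\int|r_u||u-\widehat u|\le\Norm{r_u}_{\leb{1}(0,s;\leb{2})}\Norm{\vec e}_{\leb{\infty}(0,s;\leb{2})}$ and likewise for $r_{v,1}$, followed by Young's inequality, producing $2\Norm{r_u}_{\leb{1}(0,s;\leb{2})}^2$, $2\Norm{r_{v,1}}_{\leb{1}(0,s;\leb{2})}^2$ and a harmless $\tfrac14\Norm{\vec e}_{\leb{\infty}(0,s;\leb{2})}^2$; for the parabolic residual, $\veps|\langle r_{v,2},v-\widehat v\rangle|\le\veps\Norm{r_{v,2}}_{\leb{2}(0,s;\sobh{-1})}\Norm{\partial_x(v-\widehat v)}_{\leb{2}((0,s)\times\mathbb{T}^1)}$ followed by Young's inequality with a small parameter, so that the $\partial_x(v-\widehat v)$ contribution is absorbed into the viscous dissipation on the left. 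Absorbing the $\Norm{\vec e}_{\leb{\infty}}$ contributions on the left, using that the remaining right-hand side is non-decreasing in $s$ to pass to the supremum over $(0,s)$, and collecting constants then gives the stated inequality.

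The step I expect to be the main obstacle is dealing with the degeneracy of $\vec A$: unlike in Theorem~\ref{the:yl}, the viscous dissipation controls only $\partial_x(v-\widehat v)$ and not $\partial_x(u-\widehat u)$, so one has to verify that this one-sided control still absorbs the $\sobh{-1}$-type residual. This works precisely because, by the construction of the residual in~\eqref{eq:lwr}, the $\sobh{-1}$ part $r_{v,2}$ enters only the $v$-equation, whose diffusion is non-degenerate, while the residual $r_u$ of the diffusion-free $u$-equation is the genuinely hyperbolic piece and lies in $\leb{2}$, needing no Sobolev duality. The one point requiring care on $\mathbb{T}^1$ is to measure $r_{v,2}$ in the $\sobh{-1}$ norm dual to the seminorm $\Norm{\partial_x\cdot}_{\leb{2}}$ — i.e.\ on the mean-zero subspace, which is where the reconstruction places $r_{v,2}$ — so that the Young absorption into the dissipation is exact and no leftover $\leb{2}(0,s;\leb{2})$ term (and hence no Gronwall factor) appears, in contrast with the nonlinear estimates of Theorems~\ref{the:sn} and~\ref{cor:sn}.
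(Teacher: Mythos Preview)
Your proposal is correct and follows essentially the same approach as the paper: apply Lemma~\ref{lem:greb} with the quadratic entropy~\eqref{eq:lw-entropy}, exploit linearity so that $\vec f(\vec u|\widehat{\vec u})=0$ and $\D^2\eta=\mathbb{I}$, test with the space-independent cut-off $\Phi=\zeta_\delta(t)$ so that all $\partial_x\Phi$ terms vanish by periodicity, pass $\delta\to0$, apply Young's inequality to the three residual terms, then split the resulting estimate, pass to the supremum in $s$, and recombine. Your discussion of why the degeneracy of $\vec A$ causes no trouble (the $\sobh{-1}$ residual lives only in the $v$-equation, where the dissipation is available to absorb it) and of the $\sobh{-1}$/$\sobh{1}$ duality on $\mathbb{T}^1$ is in fact more careful than the paper's own proof, which glosses over this point.
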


\begin{proof}
  The entropy/entropy-flux pair \eqref{eq:lw-entropy} yields the relative entropy/entropy-flux pair
  \begin{equation*}
    \eta(\vec u|\widehat{\vec u}) =\frac 12 \|\vec u - \widehat{\vec u}\|^2, \quad q(\vec u|\widehat{\vec u}) = -(u - \widehat u)(v - \widehat v).
  \end{equation*}
  Let $\vec e := \vec u - \widehat{\vec u}$ with components $e_u := u - \widehat u$ and $e_v := v - \widehat v$.
  Applying Lemma \ref{lem:greb} yields
  \begin{multline}\nonumber
    0 \leq  \int_0^T \int_{\mathbb{T}^1}
    \partial_t \Phi \frac 12 ( e_u^2 + e_v^2 )
    + \Phi\Big[  - \veps (\partial_x e_v)^2
    - \vec r_u e_u - r_{v,1} e_v
    \Big]\d x \d t
    \\
    \qquad  - \veps\langle \vec r_{v,2} , \Phi e_v \rangle
    + \int_{\mathbb{T}^1}
    \Phi(0,\cdot) \frac 12 ( e_u^2(0,\cdot) + e_v^2(0,\cdot) )\d x.
  \end{multline}
  We choose the space-independent test function $\Phi(t,x) = \zeta_\delta(t)$, where $\zeta_\delta$ is the temporal cutoff function from \eqref{def:psidelta}. Since boundary terms vanish due to periodicity, taking $\delta \rightarrow 0$ yields for almost all $t \in (0,T)$:
  \begin{multline}\label{eq:lw-full-estimate}
    \Norm{\vec e(t)}_{\leb{2}(\mathbb{T}^1)}^2
    + \veps \Norm{\partial_x e_v}_{\leb{2}(0,t;\leb{2}(\mathbb{T}^1))}^2
    \leq
    \Norm{\vec e(0)}_{\leb{2}(\mathbb{T}^1)}^2
    + \Norm{r_u}_{\leb{1}(0,t;\leb{2}(\mathbb{T}^1))}^2 + \frac 1 4 \Norm{e_u}_{\leb{\infty}(0,t;\leb{2}(\mathbb{T}^1))}^2 \\
    + \Norm{r_{v,1}}_{\leb{1}(0,t;\leb{2}(\mathbb{T}^1))}^2 + \frac 1 4 \Norm{e_v}_{\leb{\infty}(0,t;\leb{2}(\mathbb{T}^1))}^2
    + \veps \Norm{r_{v,2}}_{\leb{2}(0,t;\sobh{-1}(\mathbb{T}^1))}^2 + \frac \veps 4 \Norm{e_v}_{\leb{2}(0,t;\sobh{1}(\mathbb{T}^1))}^2.
  \end{multline}
  We split \eqref{eq:lw-full-estimate} into two separate bounds:
  \begin{equation}\label{eq:split-bounds}
    \Norm{\vec e(t)}_{\leb{2}(\mathbb{T}^1)}^2 \leq \mathcal{R}(t) \quad \text{and} \quad
    \veps \Norm{\partial_x e_v}_{\leb{2}(0,t;\leb{2}(\mathbb{T}^1))}^2 \leq \mathcal{R}(t),
  \end{equation}
  where $\mathcal{R}(t)$ denotes the right-hand side of \eqref{eq:lw-full-estimate}.
  Taking the supremum over $[0,t]$ in \eqref{eq:split-bounds}$_1$ and noting that $\mathcal{R}(t)$
  is non-decreasing in $t$, we obtain
  \begin{equation}
    \Norm{\vec e}_{\leb{\infty}(0,t;\leb{2}(\mathbb{T}^1))}^2 \leq \mathcal{R}(t).
  \end{equation}
  Adding this to \eqref{eq:split-bounds}$_2$ yields the final estimate.
\end{proof}

We now apply Theorems \ref{the:lw} and \ref{cor:sl} to derive a posteriori error bounds for the fully discrete RKdG approximation.

\begin{corollary}[Fully discrete a posteriori bound for the linear wave equation with degenerate diffusion]
  \label{cor:lwaveapost}
  Under the conditions of Theorem \ref{the:lw} with $\veps \geq 0$,
  suppose $\{\vec u_h^i\}_{i=0}^N$, where $\vec u_h^i \in (\fes_q^s)^2$ is an RKdG approximation 
  with an (IMEX) RK temporal discretization 
  of order at most 3, with temporal reconstruction $\widehat{\vec u}_h^t$ (Definition \ref{def:grec})
  and space-time reconstruction $\widehat{\vec u}^{ts}$ (Definition \ref{def:str}),
  with residuals $r_u, r_{v,1}, r_{v,2}$ defined by \eqref{eq:lwr}.
  For each $i = 0,\dots, N$, we have
  \begin{multline*}
    \frac 1 2\Norm{\vec u(t_i) - \vec u_h^i}_{\leb{2}(\W)}^2
    + \frac \veps 2 \enorm{v - \widehat v_h^t}_{\leb{2}(0,t_i; \fes_q)}^2 \\
    \leq 4\Norm{\vec u(0) - \widehat{\vec u}^{ts}(0)}_{\leb{2}(\W)}^2
    + 4\Norm{(r_u, r_{v,1})}_{\leb{1}(0,t_i;\leb{2}(\W))}^2
    + 4\veps \Norm{r_{v,2}}_{\leb{2}(0,t_i; \sobh{-1}(\W))}^2 \\
    + 2\Norm{\widehat{\vec u}^{ts}(t_i) - \vec u_h^i}_{\leb{2}(\W)}^2
    + 4\veps \enorm{\widehat v^{ts} - \widehat v^{t}_h}_{\leb{2}(0,t_i; \fes_q)}^2
  \end{multline*}
\end{corollary}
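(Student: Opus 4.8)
The plan is to mimic, almost verbatim, the argument used for Corollary~\ref{cor:lsapost}: insert the space-time reconstruction $\widehat{\vec u}^{ts}$ (Definition~\ref{def:str}) as an intermediate function between the exact solution and the discrete solution, and split the error with the triangle inequality and $(a+b)^2\le 2a^2+2b^2$. Concretely, I would write $\vec u(t_i)-\vec u_h^i=(\vec u(t_i)-\widehat{\vec u}^{ts}(t_i))+(\widehat{\vec u}^{ts}(t_i)-\vec u_h^i)$ and $v-\widehat v_h^t=(v-\widehat v^{ts})+(\widehat v^{ts}-\widehat v_h^t)$, so that $\tfrac12\Norm{\vec u(t_i)-\vec u_h^i}_{\leb2(\W)}^2+\tfrac\veps2\enorm{v-\widehat v_h^t}_{\leb2(0,t_i;\fes_q)}^2$ is bounded by a ``consistency'' part, measuring the distance between the PDE solution and the reconstruction, plus the fully computable ``reconstruction'' part $\Norm{\widehat{\vec u}^{ts}(t_i)-\vec u_h^i}_{\leb2(\W)}^2+\veps\enorm{\widehat v^{ts}-\widehat v^t_h}_{\leb2(0,t_i;\fes_q)}^2$, which is left untouched; the latter quantities are computable from the scheme output via Lemmas~\ref{lem:time-reconst} and \ref{lem:space-time-reconst}.

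Next I would bound the consistency part by invoking Theorem~\ref{the:lw} (for $\veps>0$; the inviscid case $\veps=0$ is covered by the linear-systems bound of Section~\ref{sec:yl}, i.e.\ Theorem~\ref{cor:yl}) with $\widehat{\vec u}=\widehat{\vec u}^{ts}$. The point requiring a short verification is that this is legitimate: by Lemma~\ref{lem:space-time-reconst}, $\widehat{\vec u}^{ts}\in\operatorname{W}^{1,\infty}((0,T)\times\mathbb{T}^1,\reals^2)$ and it solves the perturbed system \eqref{eq:lwr} with residuals obtained by substituting it into \eqref{eq:lw}; the splitting $r_u,r_{v,1},r_{v,2}$ in \eqref{eq:lwr} is exactly the specialization of the generic decomposition \eqref{eq:r1}--\eqref{eq:r2} to this system, where the first equation carries no parabolic contribution because the diffusion acts only on $v$, so $r_{v,1}\in\leb2$ and $r_{v,2}\in\leb2(0,T;\sobh{-1})$ have precisely the regularity demanded by Theorem~\ref{the:lw}. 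Moreover, since $\widehat v^{ts}$ is continuous in space, $\jump{v-\widehat v^{ts}}_j=0$ and the dG energy norm degenerates to $\enorm{v-\widehat v^{ts}}_{\leb2(0,t_i;\fes_q)}^2=\Norm{\partial_x(v-\widehat v^{ts})}_{\leb2((0,t_i)\times\mathbb{T}^1)}^2$, which is exactly the norm Theorem~\ref{the:lw} controls.

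Finally I would collect constants: applying $(a+b)^2\le 2a^2+2b^2$ once more on the right-hand side and merging the two $\leb2$-residual norms via the Minkowski-type inequality $\Norm{r_u}_{\leb1(0,t_i;\leb2)}^2+\Norm{r_{v,1}}_{\leb1(0,t_i;\leb2)}^2\le\Norm{(r_u,r_{v,1})}_{\leb1(0,t_i;\leb2)}^2$, the bound from Theorem~\ref{the:lw} (after multiplying through by the appropriate factor) yields the stated estimate. I do not expect a real obstacle at this level: the only genuine subtlety — that the hyperbolic component $e_u=u-\widehat u$ is controlled merely in $\leb\infty(0,t;\leb2)$ with no spatial regularity, while the energy/$\sobh1$ control is available only for $e_v$ — is already built into Theorem~\ref{the:lw}, so the corollary reduces to the same mechanical decomposition as the earlier ones; the main care needed is bookkeeping of the numerical constants and the correct identification of $(r_u,r_{v,1},r_{v,2})$ with the residual from \eqref{eq:r1}--\eqref{eq:r2}.
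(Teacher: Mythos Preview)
Your proposal is correct and follows essentially the same route as the paper: decompose the error through the space-time reconstruction $\widehat{\vec u}^{ts}$ via the triangle inequality and $(a+b)^2\le 2a^2+2b^2$, then invoke Theorem~\ref{the:lw} with $\widehat{\vec u}=\widehat{\vec u}^{ts}$, exactly as in Corollary~\ref{cor:lsapost}. Your additional checks (the regularity of $\widehat{\vec u}^{ts}$ via Lemma~\ref{lem:space-time-reconst}, the collapse of the dG energy norm to the $\sobh{1}$ seminorm because $\widehat v^{ts}$ is continuous, and the Minkowski-type merging of the $r_u$ and $r_{v,1}$ norms) are all sound and in fact make the argument more explicit than the paper's own terse proof.
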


\begin{proof}
  We decompose the error using the space-time reconstruction as an intermediate function,
    following the same strategy as in Corollary \ref{cor:lsapost}.
    Applying Theorem \ref{the:lw} with $\widehat{\vec u} = \widehat{\vec u}^{ts}$
    yields the stated estimate.
\end{proof}


\section{The non-linear wave equation with degenerate diffusion}\label{sec:nw}
In this section, we consider the nonlinear system with degenerate diffusion for $\vec u = (u, v)^T$:
\begin{equation}\label{eq:nw}
  \begin{split}
    &\partial_t u - \partial_x v = 0,\\
    &\partial_t v - \partial_x W'(u) = \veps \del_{xx} v.
  \end{split}
\end{equation}
Here $W \in \cont{3}(\reals,\reals)$ is strictly convex, and the diffusion
  operator acts only on the second state variable $v$.
The system corresponds to \eqref{eq:cd} with
\begin{equation*}
  \vec f(u,v) = \begin{pmatrix} -v \\ -W'(u) \end{pmatrix}, \quad
  \vec A(u,v) = \begin{pmatrix} 0&0\\ 0& 1 \end{pmatrix}.
\end{equation*}
This is the nonlinear analogue of the degenerate diffusion system from
\S\ref{sec:lw}, which can equivalently be written as the single equation
\begin{equation*}
  \del_{tt} u - \del_{xx} W'(u) = \veps \del_{txx} u
\end{equation*}
after eliminating $v$.
We refer to \cite{Daf79} for the
well-posedness of the initial boundary value problems of \eqref{eq:nw}.

\begin{remark}[Boundary conditions]
  The boundary condition complications for system \eqref{eq:nw} are analogous
  to those discussed in Remark \ref{rem:bc-lw} for the linear case.
  Consequently, we restrict our attention to periodic boundary conditions
  on $\mathbb{T}^1$ throughout this section.
\end{remark}

Consider approximate strong solutions $\widehat{\vec u} = (\widehat u, \widehat v)^T$
  satisfying the perturbed system
  \begin{equation}\label{eq:nwr}
    \begin{split}
      &\partial_t \widehat u - \partial_x \widehat v = r_u\\
      &\partial_t \widehat v - \partial_x W'( \widehat u) = \veps \del_{xx} \widehat v + r_{v,1} + \veps r_{v,2}
    \end{split}
  \end{equation}
  with residuals $r_u, r_{v,1} \in \leb{2}((0,T) \times \mathbb{T}^1)$ and $r_{v,2} \in
  \leb{2}(0,T; \sobh{-1}(\mathbb{T}^1))$.

\begin{remark}[Entropy]\label{rem:ent-nw}
  Any function $\eta\in \cont{2}(\reals^2, \reals)$ satisfying
  $\eta_{uu}=W''(u) \eta_{vv}$ defines an entropy/entropy-flux pair. Among these, we adopt
  \begin{equation}\label{eq:nw-entropy}
    \eta(u,v)=   W(u) + \frac{1}{2} v^2, \quad q( u, v) = - v W'(u),
  \end{equation}
  which reduces to the quadratic entropy \eqref{eq:lw-entropy}
  in the special case $W(u)=\tfrac{1}{2} u^2$. 
  This choice provides stronger estimates than other entropy pairs.
\end{remark}

\begin{assumption}[A priori control on the nonlinearity]\label{ass:nw-apriori}
  We assume that for some $T>0$, both $u$ and $\widehat u$ remain in a
  compact set $\mathfrak{K} \subset \reals$:
  \begin{equation*}
    u(t,x), \widehat u(t,x) \in \mathfrak{K} \quad \forall t \in [0,T] , \ x \in \mathbb{T}^1.
  \end{equation*}
  Since $W \in \cont{3}(\reals,\reals)$, there exist constants
    $0 < c_W < C_W < \infty$ satisfying
  \begin{equation}\label{eq:bi}
    \min_{ u \in \mathfrak{K}}  W''(u) \geq \frac{c_W}{2}, \quad
    \max_{ u \in \mathfrak{K}}  |W'''(u)| \leq \frac{C_W}{2}.
  \end{equation}
\end{assumption}

\begin{theorem}[Relative entropy bound for nonlinear viscous wave equation]\label{the:nw}
  Under Assumption~\ref{ass:nw-apriori}, for $\veps>0$, suppose $(u,v)$ with $u \in \leb{\infty}((0,T) \times \mathbb{T}^1)$
  and $v \in \leb{2}(0,T;\sobh{1}(\mathbb{T}^1))$ is an entropy solution of \eqref{eq:nw}
  for the entropy \eqref{eq:nw-entropy}, with initial data
  $(u_0,v_0) \in \sobh{1}(\mathbb{T}^1, \reals^2)$ and periodic boundary conditions.
  Let $(\widehat u,\widehat v) \in \sob{1}{\infty}((0,T)\times \mathbb{T}^1, \reals^2)$
  be a solution of \eqref{eq:nwr} with initial data
  $(\widehat u_0, \widehat v_0) \in \sob{1}{\infty}(\mathbb{T}^1,\reals^2)$
  and periodic boundary conditions.
  Then for almost all $t \in (0,T)$, we have
  \begin{multline*}
    \frac{c_W}{4}\Norm{u - \widehat u}_{\leb{\infty}(0,t;\leb{2}(\mathbb{T}^1))}^2
    + \frac{1}{4}\Norm{v - \widehat v}_{\leb{\infty}(0,t;\leb{2}(\mathbb{T}^1))}^2
    + \frac{\veps}{2} \Norm{\partial_x(v - \widehat v)}_{\leb{2}((0,t)\times\mathbb{T}^1)}^2 \\
    \leq \bigg[2\int_{\mathbb{T}^1} W(u(0)|\widehat u(0)) \d x
    + \Norm{v(0) - \widehat v(0)}_{\leb{2}(\mathbb{T}^1)}^2
    + \frac{4}{c_W} \Norm{W''(\widehat u) r_u}_{\leb{1}(0,t;\leb{2}(\mathbb{T}^1))}^2 \\
    + 4 \Norm{r_{v,1}}_{\leb{1}(0,t;\leb{2}(\mathbb{T}^1))}^2
    + \veps \Norm{r_{v,2}}_{\leb{2}(0,t;\sobh{-1}(\mathbb{T}^1))}^2\bigg]
    \exp\big(C_W \Norm{\partial_x \widehat v}_{\leb{\infty}((0,t)\times \mathbb{T}^1)}\big)
  \end{multline*}
\end{theorem}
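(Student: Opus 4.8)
The plan is to specialise the relative entropy inequality of Lemma~\ref{lem:greb} to the degenerate system \eqref{eq:nw} equipped with the entropy/entropy-flux pair \eqref{eq:nw-entropy}, to exploit periodicity so that all spatial boundary and flux contributions vanish, and then to close the resulting differential inequality by Gronwall. First I would record the relative quantities: writing $e_u := u - \widehat u$ and $e_v := v - \widehat v$, one computes directly
\[
  \eta(\vec u\,|\,\widehat{\vec u}) = W(u\,|\,\widehat u) + \tfrac12 e_v^2,\qquad
  q(\vec u\,|\,\widehat{\vec u}) = -e_v\bigl(W'(u) - W'(\widehat u)\bigr),\qquad
  \vec f(\vec u\,|\,\widehat{\vec u}) = \bigl(0,\,-W'(u\,|\,\widehat u)\bigr),
\]
with $W(u\,|\,\widehat u) := W(u) - W(\widehat u) - W'(\widehat u) e_u$ and $W'(u\,|\,\widehat u) := W'(u) - W'(\widehat u) - W''(\widehat u) e_u$. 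Because $\vec A = \operatorname{diag}(0,1)$ annihilates the $u$-slot and $\eta$ is quadratic in $v$, both reduction identities stated just after Lemma~\ref{lem:greb} hold here (each side sees only the $v$-component), so the simplified form of the relative entropy inequality is available.

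Substituting $\D^2\eta(\widehat{\vec u})(\vec u-\widehat{\vec u}) = (W''(\widehat u)e_u,\,e_v)$, $\vec A(\vec u)\partial_x\vec u - \vec A(\widehat{\vec u})\partial_x\widehat{\vec u} = (0,\,\partial_x e_v)$, $\vec r_1 = (r_u,\,r_{v,1})$, $\vec r_2 = (0,\,r_{v,2})$, and $\partial_x(\D\eta(\widehat{\vec u})) = (W''(\widehat u)\partial_x\widehat u,\,\partial_x\widehat v)$ reduces the simplified inequality to one containing only the scalar terms $e_v\partial_x e_v$, $(\partial_x e_v)^2$, $W''(\widehat u) r_u e_u$, $r_{v,1} e_v$, the pairing $\langle r_{v,2},\Phi e_v\rangle$, and $\partial_x\widehat v\, W'(u\,|\,\widehat u)$. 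I would then take the space-independent test function $\Phi(t,x) = \zeta_\delta(t)$ from \eqref{def:psidelta}; since $\partial_x\Phi \equiv 0$ and the domain is the torus, every $\partial_x\Phi$-term and every boundary term drops. Passing to the limit $\delta\to0$ (with $\zeta_\delta \to \mathbf{1}_{[0,s]}$ and Cauchy--Schwarz for the $\sobh{-1}$--$\sobh{1}$ pairing) yields, for a.e.\ $s\in(0,T)$,
\[
  \int_{\mathbb{T}^1} W(u(s)\,|\,\widehat u(s))\,\d x + \tfrac12\Norm{e_v(s)}_{\leb{2}(\mathbb{T}^1)}^2 + \veps\Norm{\partial_x e_v}_{\leb{2}((0,s)\times\mathbb{T}^1)}^2
  \le \int_{\mathbb{T}^1} W(u_0\,|\,\widehat u_0)\,\d x + \tfrac12\Norm{e_v(0)}_{\leb{2}(\mathbb{T}^1)}^2 + \mathcal I(s),
\]
where $\mathcal I(s)$ collects $-\int_0^s\!\int_{\mathbb{T}^1} W''(\widehat u) r_u e_u$, $-\int_0^s\!\int_{\mathbb{T}^1} r_{v,1} e_v$, the pairing bound $\veps\Norm{r_{v,2}}_{\leb{2}(0,s;\sobh{-1}(\mathbb{T}^1))}\Norm{e_v}_{\leb{2}(0,s;\sobh{1}(\mathbb{T}^1))}$, and the nonlinear term $\int_0^s\!\int_{\mathbb{T}^1} \partial_x\widehat v\, W'(u\,|\,\widehat u)$.

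It remains to close this estimate. Under Assumption~\ref{ass:nw-apriori} the states $u,\widehat u$ stay in $\mathfrak K$, so Taylor's theorem together with \eqref{eq:bi} gives the coercivity $W(u\,|\,\widehat u) \ge \tfrac{c_W}{4} e_u^2$ and the defect bound $|W'(u\,|\,\widehat u)| \le \tfrac{C_W}{4} e_u^2$; consequently the nonlinear term is bounded by a constant multiple of $\Norm{\partial_x\widehat v}_{\leb{\infty}((0,t)\times\mathbb{T}^1)}\int_0^s\!\int_{\mathbb{T}^1} W(u(\sigma)\,|\,\widehat u(\sigma))\,\d x\,\d\sigma$, which is exactly the Gronwall ingredient. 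The residual contributions are handled by Cauchy--Schwarz and Young's inequality with appropriately tuned constants, keeping enough slack to absorb $\tfrac{c_W}{4}\Norm{e_u}_{\leb{\infty}(0,s;\leb{2})}^2$, $\tfrac14\Norm{e_v}_{\leb{\infty}(0,s;\leb{2})}^2$, and a fraction of $\veps\Norm{\partial_x e_v}_{\leb{2}((0,s)\times\mathbb{T}^1)}^2$ back into the left-hand side (the $\sobh{1}$-norm of $e_v$ produced by the $r_{v,2}$-pairing splits as $\Norm{\partial_x e_v}_{\leb{2}}^2$, absorbed by the dissipation, plus $\Norm{e_v}_{\leb{2}}^2$, routed into the Gronwall term). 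Splitting the resulting inequality into a bound for $\int_{\mathbb{T}^1} W(u(s)\,|\,\widehat u(s))\,\d x + \tfrac12\Norm{e_v(s)}_{\leb{2}}^2$ and a bound for $\veps\Norm{\partial_x e_v}_{\leb{2}((0,s)\times\mathbb{T}^1)}^2$, noting that the data/residual part of the right-hand side is non-decreasing in $s$, taking suprema over $(0,t)$, and finally invoking Gronwall's inequality produces the stated estimate (using $W(u\,|\,\widehat u)\ge\tfrac{c_W}{4}e_u^2$ once more on the left to recover the $\tfrac{c_W}{4}\Norm{u-\widehat u}^2_{\leb{\infty}(0,t;\leb{2})}$ factor), the exponential coming solely from the nonlinear flux term $\partial_x\widehat v\,W'(u\,|\,\widehat u)$.

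The main obstacle is precisely this nonlinear flux term. As the $u$-component carries no diffusion, only $\Norm{u-\widehat u}_{\leb{\infty}(0,t;\leb{2}(\mathbb{T}^1))}$ is controlled, and the quadratic flux defect $W'(u\,|\,\widehat u)$ cannot be absorbed by any dissipation; it can only be dominated using the strict convexity of $W$ and the a priori confinement of $u,\widehat u$ to $\mathfrak K$ from Assumption~\ref{ass:nw-apriori}, which is what forces the Gronwall-type exponential dependence on $\widehat{\vec u}$ and separates this case from the linear wave equation of \S\ref{sec:lw}. A secondary technical point, flagged in the remark after Lemma~\ref{lem:greb}, is the well-definedness of the pairing $\veps\langle r_{v,2},\Phi\,\D^2\eta(\widehat{\vec u})(\vec u-\widehat{\vec u})\rangle$; here it is harmless because $\D^2\eta$ is diagonal, so $\vec r_2 = (0,r_{v,2})$ only meets $e_v\in\leb{2}(0,T;\sobh{1}(\mathbb{T}^1))$.
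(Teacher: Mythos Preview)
Your proposal is correct and follows essentially the same route as the paper: specialise Lemma~\ref{lem:greb} (in its simplified form, valid here because $\vec A$ kills the $u$-slot and $\eta$ is quadratic in $v$), use the space-independent test function $\zeta_\delta$ to eliminate all boundary/flux terms on $\mathbb{T}^1$, invoke the Taylor bounds from Assumption~\ref{ass:nw-apriori} to control $W(u|\widehat u)$ from below and $W'(u|\widehat u)$ from above, absorb residuals via Young, split and take suprema, then Gronwall. Your commentary on why the $\partial_x\widehat v\,W'(u|\widehat u)$ term forces the exponential and why the $r_{v,2}$ pairing is well-defined matches the paper's reasoning exactly.
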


\begin{proof}
  The entropy \eqref{eq:nw-entropy} yields the relative entropy
  \begin{equation}\label{def:renw}
    \eta(\vec u|\hatu) = W(u|\widehat u) + \tfrac{1}{2}(v - \widehat v)^2
  \end{equation}
  where $W(u|\widehat u) := W(u) - W(\widehat u) - W'(\widehat u)(u-\widehat u)$, and 
  \begin{equation*}
    \D^2 \eta (\vec u) = \begin{pmatrix}
      W''(u) & 0\\ 0 & 1
    \end{pmatrix}.
  \end{equation*}
  Since $\D \eta(\vec u) - \D \eta (\hatu) = (W'(u) - W'(\widehat u), v - \widehat v)^T$,
  we have $\D\eta(\vec u| \hatu) = (W'(u|\widehat u), 0)^T$.
  Additionally, $\vec f(\vec u|\hatu) = (0, -W'(u|\widehat u))^T$,
  where $W'(u|\widehat u) = W'(u) - W'(\widehat u) - W''(\widehat u)(u - \widehat u)$.
  Let $\vec e := \vec u - \hatu$ with components $e_u := u - \widehat u$ and $e_v := v - \widehat v$.
  Applying Lemma \ref{lem:greb} with the relative entropy yields
  \begin{multline}\label{eq:nw-reb}
    0 \leq  \int_0^T \int_{\mathbb{T}^1}
    \partial_t \Phi \big( W(u|\widehat u) + \tfrac{1}{2} e_v^2 \big)
    + \Phi\Big[ - \veps (\partial_x e_v)^2
    + r_u W''(\widehat u) e_u + r_{v,1} e_v \\
    - (\partial_x\widehat v) W'(u|\widehat u)
    \Big]\d x \d t
    - \veps\langle r_{v,2}, \Phi e_v \rangle
    + \int_{\mathbb{T}^1}
    \Phi(0,\cdot) \big( W(u(0)|\widehat u(0)) + \tfrac{1}{2} e_v(0)^2 \big) \d x.
  \end{multline}
 We choose the space-independent test function $\Phi(t,x) = \zeta_\delta(t)$,
  where $\zeta_\delta$ is the temporal cutoff function from \eqref{def:psidelta}.
  Since boundary terms vanish due to periodicity, taking $\delta \rightarrow 0$
  yields for almost all $t \in (0,T)$:
  \begin{multline}
    \label{nw-reb-int}
      \int_{\mathbb{T}^1} W(u(t)|\widehat u(t)) \d x
      +
      \frac 1 2 \Norm{e_v(t)}_{\leb{2}(\mathbb{T}^1)}^2
      +
      \veps \Norm{\partial_x e_v}_{\leb{2}((0,t)\times \mathbb{T}^1)}^2
      \\
      \leq
      \int_{\mathbb{T}^1} W(u(0)|\widehat u(0)) \d x
      +
      \frac 1 2 \Norm{e_v(0)}_{\leb{2}(\mathbb{T}^1)}^2
      - \veps\langle r_{v,2} , e_v \rangle \\
      +
      \int_0^t \int_{\mathbb{T}^1}  (\partial_x \widehat v) W'(u | \widehat u)
      -
      r_u W''(\widehat u) e_u - r_{v,1} e_v\d x \d s .
  \end{multline}
  From \eqref{eq:bi}, we have $2 W(u| \widehat u) \geq c_W \norm{ u - \widehat u}^2$ and $2 |W'(u| \widehat u)| \leq C_W \norm{ u - \widehat u}^2$ for all $u, \widehat u \in \mathfrak{K}$.
  Substituting these bounds into \eqref{nw-reb-int} and applying Young's inequality yields
  \begin{multline} \label{nw-reb-int2}
      \frac{c_W}{2} \Norm{e_u(t) }_{\leb{2}(\mathbb{T}^1)}^2
      +
      \frac{1}{2}\Norm{e_v(t)}_{\leb{2}(\mathbb{T}^1)}^2
      +
      \frac{\veps}{2} \Norm{\partial_x e_v}_{\leb{2}((0,t)\times\mathbb{T}^1)}^2
      \\ \leq
      \int_{\mathbb{T}^1} W(u(0)|\widehat u(0)) \d x
      +
      \frac 1 2 \Norm{e_v(0)}_{\leb{2}(\mathbb{T}^1)}^2
      +
      \frac{C_W}{2}\Norm{\partial_x \widehat v}_{\leb{\infty}((0,t)\times \mathbb{T}^1)}
      \Norm{ e_u}_{\leb{2}((0,t)\times \mathbb{T}^1)}^2 \\
      \quad + \frac{2}{c_W} \Norm{W''(\widehat u) r_u}_{\leb{1}(0,t;\leb{2}(\mathbb{T}^1))}^2 + \frac{c_W}{8}\Norm{e_u}_{\leb{\infty}(0,t;\leb{2}(\mathbb{T}^1))}^2\\
      \quad + 2 \Norm{r_{v,1}}_{\leb{1}(0,t;\leb{2}(\mathbb{T}^1))}^2 + \frac{1}{8}\Norm{e_v}_{\leb{\infty}(0,t;\leb{2}(\mathbb{T}^1))}^2
      + \frac{\veps}{2} \Norm{r_{v,2}}_{\leb{2}(0,t;\sobh{-1}(\mathbb{T}^1))}^2.
  \end{multline}
  We denote by $\mathcal{R}(t)$ the right-hand side of \eqref{nw-reb-int2} and split the estimate into two separate bounds:
  \begin{equation}\label{eq:nw-split}
    \frac{c_W}{2}\Norm{e_u(t)}_{\leb{2}(\mathbb{T}^1)}^2 + \frac{1}{2}\Norm{e_v(t)}_{\leb{2}(\mathbb{T}^1)}^2 \leq \mathcal{R}(t)
    \quad \text{and} \quad
    \frac{\veps}{2} \Norm{\partial_x e_v}_{\leb{2}((0,t)\times\mathbb{T}^1)}^2 \leq \mathcal{R}(t).
  \end{equation}
  Taking the supremum over $[0,t]$ in \eqref{eq:nw-split}$_1$ and noting that $\mathcal{R}(t)$
  is non-decreasing in $t$, we obtain
  \begin{equation}\label{eq:nw-supremum}
    \frac{c_W}{2}\Norm{e_u}_{\leb{\infty}(0,t;\leb{2}(\mathbb{T}^1))}^2 + \frac{1}{2}\Norm{e_v}_{\leb{\infty}(0,t;\leb{2}(\mathbb{T}^1))}^2 \leq \mathcal{R}(t).
  \end{equation}
  Adding \eqref{eq:nw-supremum} and \eqref{eq:nw-split}$_2$ yields:
\begin{multline}
  \label{nw-reb-int3}
        \frac{c_W}{4}\Norm{e_u}_{\leb{\infty}(0,t;\leb{2}(\W))}^2 + \frac{1}{4}\Norm{e_v}_{\leb{\infty}(0,t;\leb{2}(\W))}^2
      + \frac \veps 2 \Norm{\partial_x e_v}_{\leb{2}((0,t)\times\W )}^2 \\
    \leq
      2\int_{\mathbb{T}^1} W(u(0)|\widehat u(0)) \d x
      +
      \Norm{e_v(0)}_{\leb{2}(\mathbb{T}^1)}^2
      +
      C_W \Norm{\partial_x \widehat v}_{\leb{\infty}((0,t)\times \mathbb{T}^1)}
      \Norm{ e_u}_{\leb{2}((0,t)\times \mathbb{T}^1)}^2 \\
      \quad + \frac{4}{c_W} \Norm{W''(\widehat u) r_u}_{\leb{1}(0,t;\leb{2}(\mathbb{T}^1))}^2
      \quad + 4 \Norm{r_{v,1}}_{\leb{1}(0,t;\leb{2}(\mathbb{T}^1))}^2
      + \veps \Norm{r_{v,2}}_{\leb{2}(0,t;\sobh{-1}(\mathbb{T}^1))}^2.
  \end{multline}
  Applying Gronwall's inequality completes the proof.
\end{proof}

\begin{theorem}[Relative entropy bound for inviscid nonlinear wave equation]
  \label{cor:nw}
  For the inviscid case $\veps=0$, suppose $\vec u \in \leb{\infty}((0,T) \times
  \mathbb{T}^1, \reals^2)$ is an entropy solution of \eqref{eq:nw} with respect to the
  entropy \eqref{eq:nw-entropy}, with initial data $\vec
  u_0 \in \leb{\infty}(\mathbb{T}^1, \reals^2)$ and periodic boundary conditions. Let
  $\widehat{\vec u} \in \sob{1}{\infty}((0,T) \times \mathbb{T}^1, \reals^2)$ be a solution of
  \eqref{eq:nwr} with initial data $\widehat{\vec u}_0 \in
  \sob{1}{\infty}(\mathbb{T}^1, \reals^2)$ and periodic boundary conditions. Then for almost all $t \in (0,T)$, we have
  \begin{multline*}
    \frac{c_W}{4}\Norm{u(t) - \widehat u(t)}_{\leb{2}(\mathbb{T}^1)}^2 + \frac{1}{4}\Norm{v(t) - \widehat v(t)}_{\leb{2}(\mathbb{T}^1)}^2 \\
    \leq \bigg[\int_{\mathbb{T}^1} W(u(0)|\widehat u(0)) \d x
    + \frac{1}{2} \Norm{v(0) - \widehat v(0)}_{\leb{2}(\mathbb{T}^1)}^2 \\
    + \frac{2}{c_W} \Norm{W''(\widehat u) r_u}_{\leb{1}(0,t;\leb{2}(\mathbb{T}^1))}^2
    + 2 \Norm{r_{v,1}}_{\leb{1}(0,t;\leb{2}(\mathbb{T}^1))}^2\bigg]
    \exp\bigg(\frac{C_W t}{2} \Norm{\partial_x \widehat v}_{\leb{\infty}((0,t)\times \mathbb{T}^1)}\bigg)
  \end{multline*}
\end{theorem}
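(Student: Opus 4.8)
The plan is to repeat the argument of Theorem~\ref{the:nw} with $\veps = 0$, which removes every diffusion term together with the residual $r_{v,2}$ and leaves a purely hyperbolic relative-entropy balance. As in \eqref{def:renw}, the entropy/entropy-flux pair \eqref{eq:nw-entropy} gives the relative entropy $\eta(\vec u|\hatu) = W(u|\widehat u) + \tfrac12 (v-\widehat v)^2$, with $\D\eta(\vec u|\hatu) = (W'(u|\widehat u),0)^T$ and $\vec f(\vec u|\hatu) = (0,-W'(u|\widehat u))^T$; Assumption~\ref{ass:nw-apriori} is in force (it is automatic under the stated hypotheses, since $\widehat u \in \sob{1}{\infty}$ is Lipschitz and $\vec u \in \leb{\infty}$, so $u$ and $\widehat u$ both take values in a common compact set $\mathfrak{K}$) and it supplies the constants $c_W, C_W$ of \eqref{eq:bi}. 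Writing $e_u := u-\widehat u$ and $e_v := v-\widehat v$ and setting $\veps = 0$ in Lemma~\ref{lem:greb}, the relative-entropy inequality collapses to
\[
  0 \leq \int_0^T \int_{\mathbb{T}^1} \partial_t\Phi\,\bigl(W(u|\widehat u)+\tfrac12 e_v^2\bigr) + \partial_x\Phi\, q(\vec u|\hatu) + \Phi\bigl[\,r_u W''(\widehat u) e_u + r_{v,1} e_v - (\partial_x\widehat v)\,W'(u|\widehat u)\,\bigr] \d x\,\d t + \int_{\mathbb{T}^1}\Phi(0,\cdot)\,\bigl(W(u_0|\widehat u_0)+\tfrac12 e_v(0)^2\bigr)\d x .
\]

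As in the proofs of Theorems~\ref{cor:sn} and \ref{the:lw}, I would then exploit the periodicity by choosing the \emph{space-independent} test function $\Phi(t,x) = \zeta_\delta(t)$, with $\zeta_\delta$ the temporal cutoff from \eqref{def:psidelta}. Since $\partial_x\Phi \equiv 0$, the relative-entropy-flux term $\partial_x\Phi\, q(\vec u|\hatu)$ drops out, and this is precisely what makes the merely $\leb{\infty}$ regularity of the entropy solution $u$ enough in the inviscid case, as no spatial derivative of $u$ is ever needed. Letting $\delta \searrow 0$ gives, for almost every $t\in(0,T)$,
\[
  \int_{\mathbb{T}^1} W(u(t)|\widehat u(t))\,\d x + \tfrac12\Norm{e_v(t)}_{\leb{2}(\mathbb{T}^1)}^2 \leq \int_{\mathbb{T}^1} W(u_0|\widehat u_0)\,\d x + \tfrac12\Norm{e_v(0)}_{\leb{2}(\mathbb{T}^1)}^2 + \int_0^t \int_{\mathbb{T}^1}\bigl[\,(\partial_x\widehat v)\,W'(u|\widehat u) - r_u W''(\widehat u)e_u - r_{v,1}e_v\,\bigr]\d x\,\d s .
\]

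The remainder is routine bookkeeping. Using the convexity bounds \eqref{eq:bi}, on the left $W(u(t)|\widehat u(t))$ is bounded below by a multiple of $c_W\,e_u(t)^2$, turning the relative entropy into genuine $\leb{2}$ control of $e_u$; on the right $|W'(u|\widehat u)| \lesssim C_W\,e_u^2$ and $|W''(\widehat u)|$ is bounded on $\mathfrak{K}$, so the term $\int_0^t\int_{\mathbb{T}^1}(\partial_x\widehat v)W'(u|\widehat u)$ is controlled by $C_W\Norm{\partial_x\widehat v}_{\leb{\infty}((0,t)\times\mathbb{T}^1)}\int_0^t\Norm{e_u(s)}_{\leb{2}(\mathbb{T}^1)}^2\,\d s$ and the two forcing terms by Cauchy--Schwarz in time and Young's inequality, leaving small multiples of $\Norm{e_u}_{\leb{\infty}(0,t;\leb{2}(\mathbb{T}^1))}^2$ and $\Norm{e_v}_{\leb{\infty}(0,t;\leb{2}(\mathbb{T}^1))}^2$ on the left. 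Repeating the splitting-and-supremum step of Theorem~\ref{the:nw} (using that the data part of the right-hand side is non-decreasing in $t$), I would arrive at an inequality
\[
  \tfrac{c_W}{4}\Norm{e_u}_{\leb{\infty}(0,t;\leb{2}(\mathbb{T}^1))}^2 + \tfrac14\Norm{e_v}_{\leb{\infty}(0,t;\leb{2}(\mathbb{T}^1))}^2 \leq \mathcal{D}(t) + C_W\Norm{\partial_x\widehat v}_{\leb{\infty}((0,t)\times\mathbb{T}^1)}\int_0^t\Norm{e_u(s)}_{\leb{2}(\mathbb{T}^1)}^2\,\d s,
\]
where $\mathcal{D}(t)$ is the non-decreasing right-hand side built from the initial data $2\int_{\mathbb{T}^1} W(u_0|\widehat u_0)\,\d x + \Norm{e_v(0)}_{\leb{2}(\mathbb{T}^1)}^2$ and the residual terms $\tfrac{4}{c_W}\Norm{W''(\widehat u)r_u}_{\leb{1}(0,t;\leb{2}(\mathbb{T}^1))}^2 + 4\Norm{r_{v,1}}_{\leb{1}(0,t;\leb{2}(\mathbb{T}^1))}^2$. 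Gronwall's inequality then closes the estimate and produces the exponential factor $\exp\bigl(\tfrac{C_W t}{2}\Norm{\partial_x\widehat v}_{\leb{\infty}((0,t)\times\mathbb{T}^1)}\bigr)$, yielding the claimed bound after relabelling constants. I do not expect a genuine obstacle: the one real subtlety is the loss of regularity in the inviscid limit --- $u$ is only bounded, so the relative-entropy flux cannot be integrated by parts --- which is why the test function must be space-independent and the analysis is confined to $\mathbb{T}^1$; everything else is a direct specialisation of the viscous argument.
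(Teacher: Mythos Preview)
Your proposal is correct and follows essentially the same route as the paper: specialise the viscous argument of Theorem~\ref{the:nw} to $\veps=0$, use the space-independent test function $\Phi=\zeta_\delta$ on $\mathbb{T}^1$, bound via \eqref{eq:bi} and Young, take the supremum in time, and close with Gronwall. The one small difference is that the paper notes the splitting step \eqref{eq:nw-split} is no longer needed once the parabolic term $\tfrac{\veps}{2}\Norm{\partial_x e_v}^2$ disappears; skipping it gives directly the constants $1,\tfrac12,\tfrac{2}{c_W},2$ in the bracket, whereas your ``repeat the splitting'' route inflates them by a factor of two before your relabelling.
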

\begin{proof}
  The proof follows Theorem \ref{the:nw} with a notable simplification because
  $\veps = 0$ eliminates the parabolic component. We no longer need to split
  the estimate into hyperbolic and parabolic bounds, yielding a single bound
  with a different constant.
\end{proof}

We now apply Theorems \ref{the:nw} and \ref{cor:nw} to derive a posteriori error bounds for the fully discrete RKdG approximation.

\begin{corollary}[Fully discrete a posteriori bound for the nonlinear wave equation]
  \label{cor:nlsysapost}
  Under the conditions of Theorem
  \ref{the:nw} with $\veps \geq 0$, suppose $\{(u_h^i, v_h^i)\}_{i=0}^N$, where $(u_h^i, v_h^i) \in (\fes_q^s)^2$ is an RKdG approximation with
  an (IMEX) RK temporal discretization of order at most
  3, with temporal reconstruction $(\widehat u_h^t, \widehat v_h^t)$ (Definition \ref{def:grec})
  and space-time reconstruction $(\widehat u^{ts}, \widehat v^{ts})$ (Definition \ref{def:str}),
  with residuals $r_u, r_{v,1}, r_{v,2}$ defined by \eqref{eq:nwr}.
  Then for each $i = 0,\dots, N$, we have
  \begin{multline*}
    \frac{c_W}{4}\Norm{u(t_i) - u_h^i}_{\leb{2}(\W)}^2
    + \frac{1}{4}\Norm{v(t_i) - v_h^i}_{\leb{2}(\W)}^2
    + \frac{\veps}{2}
    \enorm{v - \widehat v_h^t}_{\leb{2}(0,t_i; \fes_q)}^2\\
    \leq
    4\exp\bigg(C_W t_i \Norm{\partial_x \widehat v^{ts}}_{\leb{\infty}((0,T) \times \W)}\bigg)
    \bigg[\int_\W W(u(0)|\widehat u^{ts}(0)) \d x
    + \frac{1}{2}\Norm{v(0) - \widehat v^{ts}(0)}_{\leb{2}(\W)}^2 \\
    + \frac{2}{c_W}\Norm{W''(\widehat u^{ts}) r_u}_{\leb{1}(0,t_i;\leb{2}(\W))}^2
    + 2\Norm{r_{v,1}}_{\leb{1}(0,t_i;\leb{2}(\W))}^2
    + \frac{\veps}{2} \Norm{r_{v,2}}_{\leb{2}(0,t_i; \sobh{-1}(\W))}^2\bigg] \\
    + \frac{c_W}{2}\Norm{\widehat u^{ts}(t_i) - u_h^i}_{\leb{2}(\W)}^2
    + \frac{1}{2}\Norm{\widehat v^{ts}(t_i) - v_h^i}_{\leb{2}(\W)}^2
    + \veps \enorm{\widehat v^{ts} - \widehat v^{t}_h}_{\leb{2}(0,t_i; \fes_q)}^2.
  \end{multline*}
\end{corollary}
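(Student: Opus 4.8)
The plan is to mirror the two-step decomposition used in the earlier corollaries (Corollary~\ref{cor:lsapost}, \ref{cor:nlsapost}, \ref{cor:lsysapost}, \ref{cor:lwaveapost}): insert the space-time reconstruction $\widehat{\vec u}^{ts}$ as an intermediate function and split the fully discrete error into a \emph{PDE error} $\vec u - \widehat{\vec u}^{ts}$, controlled by the relative entropy stability bound of Theorem~\ref{the:nw} (or Theorem~\ref{cor:nw} when $\veps=0$), and a \emph{reconstruction error} $\widehat{\vec u}^{ts}-\vec u_h^i$, which is fully computable. Componentwise one writes $u(t_i)-u_h^i = \qp{u(t_i)-\widehat u^{ts}(t_i)} + \qp{\widehat u^{ts}(t_i)-u_h^i}$ and similarly for $v$, and for the dG energy norm $v - \widehat v_h^t = \qp{v - \widehat v^{ts}} + \qp{\widehat v^{ts} - \widehat v_h^t}$. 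Applying the triangle inequality together with $(a+b)^2 \le 2(a^2+b^2)$ then separates the two groups of terms. Since $\widehat{\vec u}^{ts}(t,\cdot)$ is continuous on $\W$ by Lemma~\ref{lem:space-time-reconst} and $\vec u$ lies in $\leb{2}(0,T;\sobh{1})$, the jump terms in $\enorm{v-\widehat v^{ts}}_{\leb{2}(0,t_i;\fes_q)}$ vanish, so this quantity reduces to $\Norm{\partial_x(v-\widehat v^{ts})}_{\leb{2}((0,t_i)\times\W)}$, which is precisely what Theorem~\ref{the:nw} controls.

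Next I would justify the substitution $\hatu = \widehat{\vec u}^{ts}$ in Theorems~\ref{the:nw}--\ref{cor:nw}. By Lemmas~\ref{lem:time-reconst} and \ref{lem:space-time-reconst}, $\widehat{\vec u}^{ts} \in \sob{1}{\infty}((0,T)\times\W,\reals^2)$ and is Lipschitz in space, hence an admissible strong solution of the perturbed system \eqref{eq:nwr}; moreover, by construction the residuals $r_u,r_{v,1},r_{v,2}$ in \eqref{eq:nwr} are exactly those produced by the splitting \eqref{eq:r1}--\eqref{eq:r2} of the reconstruction residual, so there is nothing further to identify. Invoking Theorem~\ref{the:nw} (for $\veps>0$) with $\hatu=\widehat{\vec u}^{ts}$ gives
\[
  \frac{c_W}{4}\Norm{u-\widehat u^{ts}}_{\leb{\infty}(0,t_i;\leb{2}(\W))}^2
  + \frac14\Norm{v-\widehat v^{ts}}_{\leb{\infty}(0,t_i;\leb{2}(\W))}^2
  + \frac{\veps}{2}\Norm{\partial_x(v-\widehat v^{ts})}_{\leb{2}((0,t_i)\times\W)}^2
  \le \mathcal E_0,
\]
where $\mathcal E_0$ denotes the bracketed right-hand side of Theorem~\ref{the:nw} with $\widehat u$ replaced by $\widehat u^{ts}$. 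Feeding this into the decomposition, multiplying through by the appropriate constants and tracking the factors of $2$ coming from $(a+b)^2 \le 2(a^2+b^2)$ yields the stated estimate; in particular the factor $4$ in front of the exponential and the halving of the data and residual constants relative to Theorem~\ref{the:nw} is exactly the outcome of this bookkeeping. The case $\veps=0$ is identical but uses Theorem~\ref{cor:nw}, whose right-hand side has no parabolic term, so the decomposition of the energy norm is simply dropped.

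The only step that is not pure bookkeeping is checking that Assumption~\ref{ass:nw-apriori} holds for the pair $(u,\widehat u^{ts})$, since Theorems~\ref{the:nw}--\ref{cor:nw} presuppose it. For $u$ this is the standing hypothesis underlying those theorems (the maximum principle, resp.\ the assumed a priori bound); for $\widehat u^{ts}$ it is automatic because $\widehat u^{ts}$ is an explicit continuous function on the compact set $[0,T]\times\W$ and therefore bounded, so one may take $\mathfrak{K}$ to be any compact interval containing both the essential range of $u$ and the range of $\widehat u^{ts}$. The constants $c_W,C_W$ in \eqref{eq:bi} are then fixed by this choice, and every quantity on the right-hand side of the corollary --- notably $\Norm{\partial_x\widehat v^{ts}}_{\leb{\infty}((0,T)\times\W)}$ and $\Norm{W''(\widehat u^{ts})r_u}_{\leb{1}(0,t_i;\leb{2}(\W))}$ --- is computable a posteriori. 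I expect no further obstacle; the main subtlety, as in Theorem~\ref{the:nw} itself, is simply that the estimator inherits an exponential dependence on $\Norm{\partial_x\widehat v^{ts}}_{\leb{\infty}}$, which is the price of the weak--strong nature of the relative entropy method in the nonlinear degenerate setting.
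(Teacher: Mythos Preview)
Your proposal is correct and follows precisely the paper's own approach: decompose the error via the intermediate $\widehat{\vec u}^{ts}$, apply the triangle inequality with $(a+b)^2\le 2(a^2+b^2)$, and invoke Theorem~\ref{the:nw} (resp.\ Theorem~\ref{cor:nw} when $\veps=0$) with $\hatu=\widehat{\vec u}^{ts}$. Your additional remarks on verifying Assumption~\ref{ass:nw-apriori} for $\widehat u^{ts}$ and on computability of the right-hand side are welcome elaborations beyond what the paper spells out.
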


\begin{proof}
  We decompose the error using the space-time reconstruction as an intermediate function,
    following the same strategy as in Corollary \ref{cor:lsapost}.
    Applying Theorem \ref{the:nw} with $\widehat u = \widehat u^{ts}$ and $\widehat v = \widehat v^{ts}$
    yields the stated estimate.
\end{proof}

\section{Numerical experiments}
\label{sec:num}

In this section, we present numerical experiments to validate the proposed a posteriori error estimator. 
Our validation approach is threefold. 
First, we establish a baseline by verifying that the underlying Runge-Kutta discontinuous Galerkin (RKdG) scheme achieves its expected orders of convergence for the test problems. Second, we demonstrate the optimality of our estimator by showing that the residual components $\vec r_1$ and $\vec r_2$ converge at the same rate as the true discretization error in the advection-dominated regime. Finally, we confirm the robustness with respect to viscosity parameter $\veps$, ensuring the estimator remains bounded as $\veps \to 0$.

The implementation is done by using \texttt{Julia}. Spatial discretization uses the discontinuous Galerkin scheme \eqref{eq:sdisc} with symmetric interior penalty (SIP) for diffusion \cite{Di-PietroErn:2012} and Lax-Wendroff flux \eqref{def:LW} for convection. 
Temporal integration employs \texttt{KenCarp3} from \texttt{DifferentialEquations.jl}, a third-order implicit-explicit (IMEX) Runge-Kutta method \cite{Kennedy2003}.
Throughout our experiments, we observe that this third-order temporal scheme may exhibit order reduction to second-order in diffusion-dominated regime, which is a well-known property of IMEX schemes in highly stiff regimes \cite{Boscarino_2009}. 

We focus on evaluating two residual components. 
The norm of the hyperbolic residual $\Norm{\vec r_1}_{\leb{1}(0, T ; \leb{2}(\W))}$ is computed directly (Lemma \ref{lem:space-time-reconst}).
In contrast, the dual norm of parabolic residual $\Norm{\vec r_2}_{\leb{2}(0, T ; \sobh{-1}(\W))}$, which cannot be computed directly, is estimated using the computable indicator
\begin{equation*}
  \Norm{\vec r_2}_{\leb{2}(0, T ; \sobh{-1}(\W))} \lesssim \theta := \theta_1 + \theta_2 + \theta_3,
\end{equation*}
where $\theta_1$, $\theta_2$, $\theta_3$ are defined in Appendix \ref{sec:comp}.

To verify convergence rates and optimality of the proposed error estimator, we compute the experimental order of convergence (EOC):
\begin{equation*}
\text{EOC} = \frac{\log(e_{h_1}/e_{h_2})}{\log(h_1/h_2)},
\end{equation*}
where $e_{h_i}$ represents either the true error or the estimator evaluated on mesh size $h_i$, with $h_1 > h_2$.

We conduct all tests on uniform spatial meshes until a final time of $T=0.5$. To analyze convergence behavior, we employ a sequence of successively refined meshes where the number of elements $N_e$ ranges from $16$ to $1024$ in powers of two (i.e., $N_e \in \{2^4, 2^5, \ldots, 2^{10}\}$). We consider both linear ($q=1$) and quadratic ($q=2$) polynomial approximations.
For temporal discretization, the time step $\Delta t$ is chosen to scale linearly with the mesh size $h$. The specific proportionality constant is adapted to each problem and detailed in the respective subsections.
We consider viscosity parameters ranging across eight orders of magnitude, $\veps \in \{10^{-1}, 10^{-2}, \ldots, 10^{-8}\}$, capturing the transition from diffusion-dominated to advection-dominated regimes.

\subsection{Linear advection-diffusion equation}

In this subsection, we present numerical experiments for the linear advection-diffusion equation to validate the a posteriori error estimators derived in Corollary \ref{cor:lsapost}. 
Consider the linear advection-diffusion equation
\begin{equation}\label{eq:linear-adv-diff}
  \partial_t u + \partial_x u = \veps \partial_{xx} u
\end{equation}
on $\W = [0, 2\pi]$ with initial data $u(x,0) = \sin{x}$. The exact solution is given by
\begin{equation*}
  u(x,t) = e^{-\veps t} \sin{x - t}.
\end{equation*}
For this problem, the time step is set as $\Delta t = 0.1h$.

\subsubsection{Convergence rates of the RKdG solution}
The numerical solution demonstrates optimal convergence in the advection-dominated regimes ($\veps \leq 10^{-6}$), achieving the optimal rate of $\mathcal{O}(h^{q+1})$ in the $\leb{\infty}(0,T;\leb{2}(\W))$-norm and $\mathcal{O}(h^{q})$ in the dG energy norm, as shown in Figures \ref{fig:linear_rkdg_error} and \ref{fig:linear_rkdg_mesh_norm}, respectively.
Note that, as mentioned at the beginning of this section, the $q=2$ case exhibits order reduction in the diffusion-dominated regime ($\veps = 10^{-1}, 10^{-2}$), degrading from third to second order in the $\leb{\infty}(0,T;\leb{2}(\W))$-norm. 

\begin{figure}[!ht]
\centering
\begin{minipage}{0.48\textwidth}
\centering
\includegraphics[width=\textwidth]{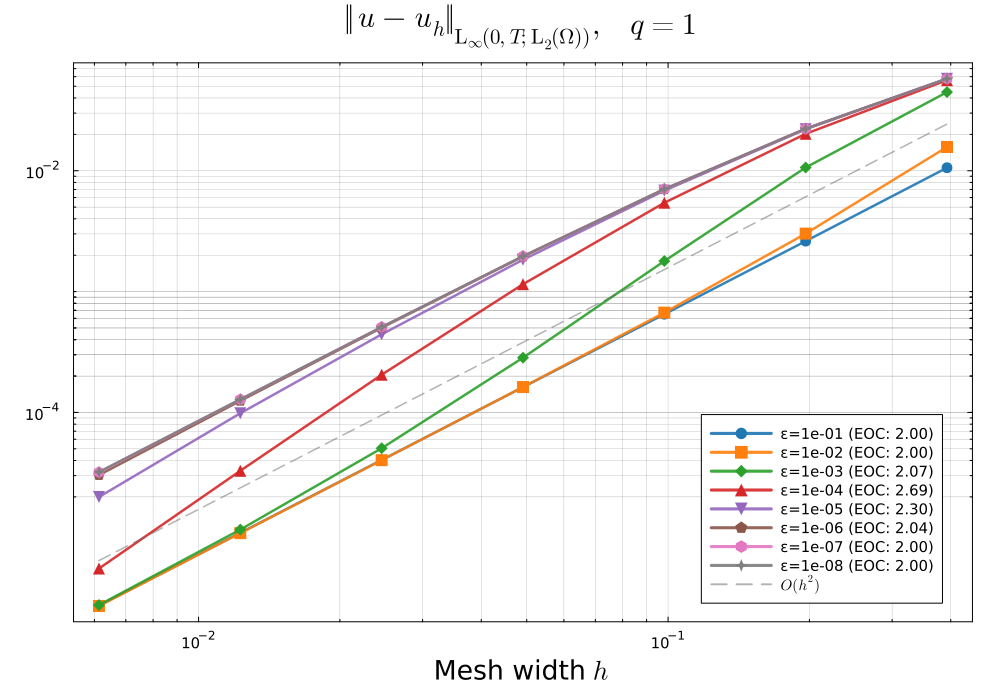}
\end{minipage}
\hfill
\begin{minipage}{0.48\textwidth}
\centering
\includegraphics[width=\textwidth]{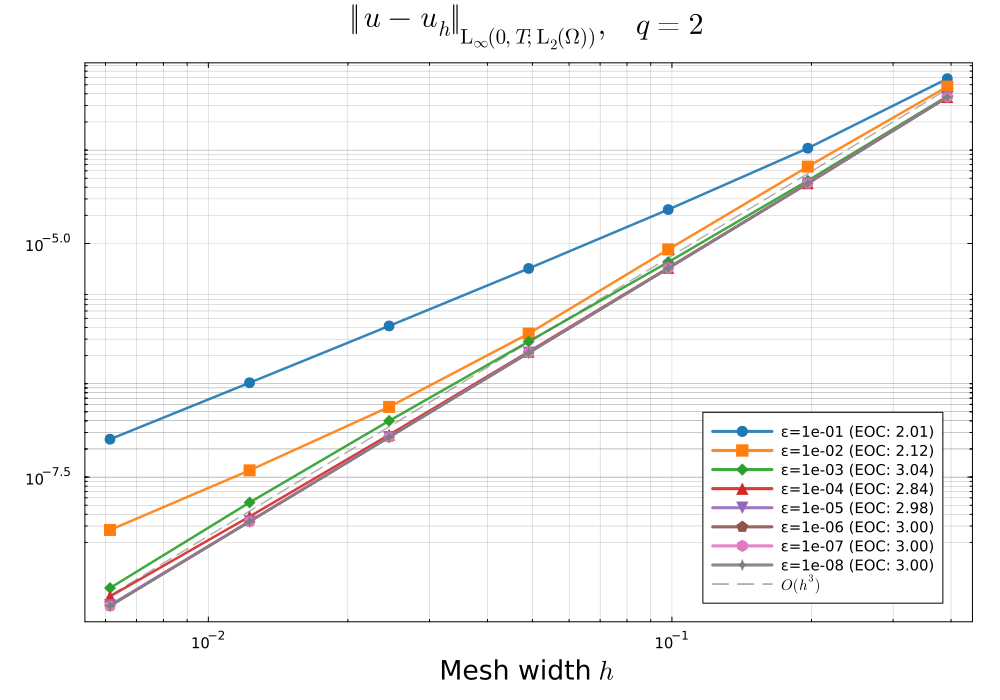}
\end{minipage}
\caption{RKdG solution error in $\leb{\infty}(0,T;\leb{2}(\W))$-norm for linear advection-diffusion equation \eqref{eq:linear-adv-diff} for polynomial degrees $q=1$ (left) and $q=2$ (right)}
\label{fig:linear_rkdg_error}
\end{figure}

\begin{figure}[!ht]
\centering
\begin{minipage}{0.48\textwidth}
\centering
\includegraphics[width=\textwidth]{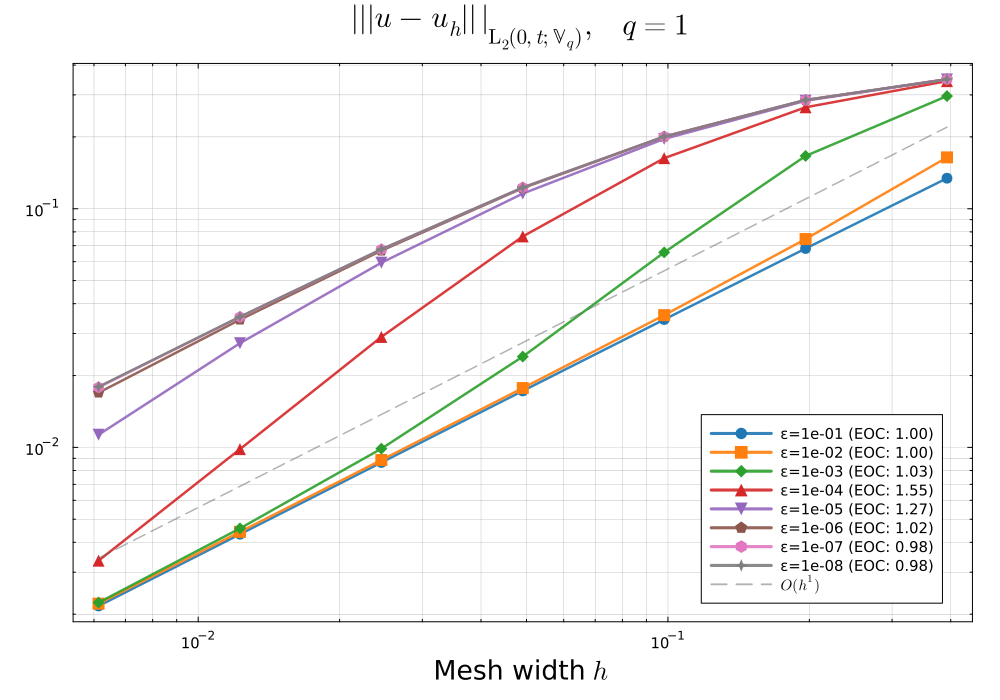}
\end{minipage}
\hfill
\begin{minipage}{0.48\textwidth}
\centering
\includegraphics[width=\textwidth]{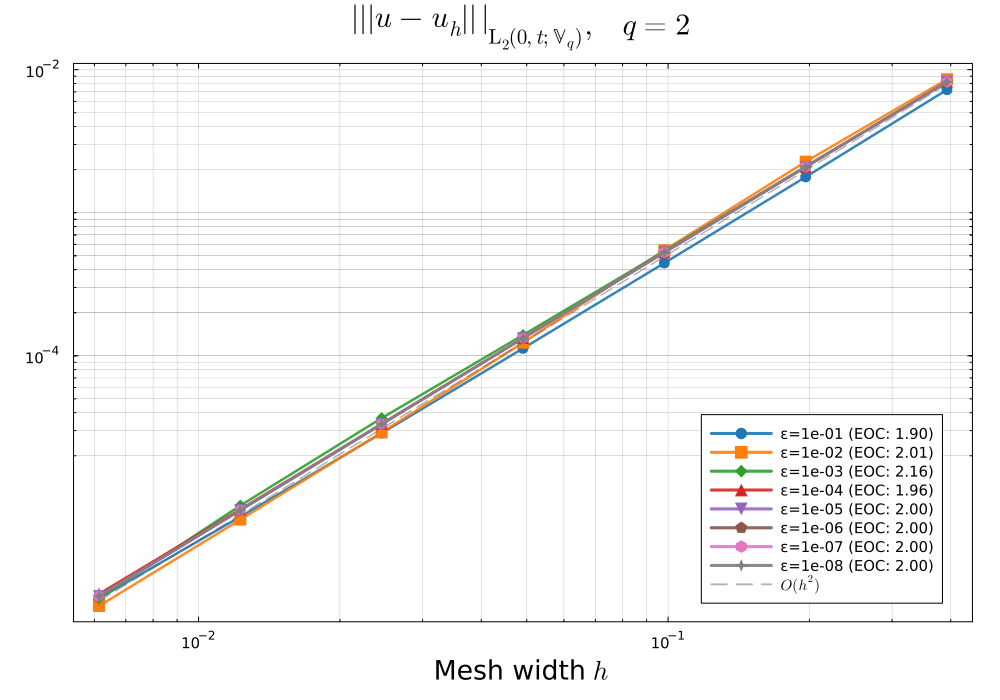}
\end{minipage}
\caption{RKdG error in dG energy norm for linear advection-diffusion equation \eqref{eq:linear-adv-diff} for polynomial degrees $q=1$ (left) and $q=2$ (right)}
\label{fig:linear_rkdg_mesh_norm}
\end{figure}

\subsubsection{Optimality and robustness of the a posteriori estimators}

The hyperbolic residual $r_1$ serves as an optimal indicator for the $\leb{\infty}(0,T;\leb{2}(\W))$-norm error in the advection-dominated regime ($\veps \leq 10^{-6}$). 
Figure \ref{fig:linear_r1} shows that $r_1$ achieves the $\mathcal{O}(h^{q+1})$ convergence rate of the true error and remains bounded as $\veps \to 0$, confirming $\veps$-robustness.
Note that for $\veps = 10^{-1}, 10^{-2}$, the EOC for $q=1$ is observed to be one order lower than the optimal rate of $\mathcal{O}(h^2)$.

Similarly, the parabolic residual $r_2$ behaves as an optimal indicator for the dG energy norm error in the advection-dominated regime. Figure \ref{fig:linear_r2} shows that the convergence rates are consistent with the $\mathcal{O}(h^q)$ rates of the energy norm error and confirms the $\veps$-robustness of $r_2$.
In contrast, for $\veps = 10^{-1}$ with $q=2$, the EOC is observed to be slightly lower than the optimal rate of $\mathcal{O}(h^2)$.

\begin{figure}[!ht]
\centering
\begin{minipage}{0.48\textwidth}
\centering
\includegraphics[width=\textwidth]{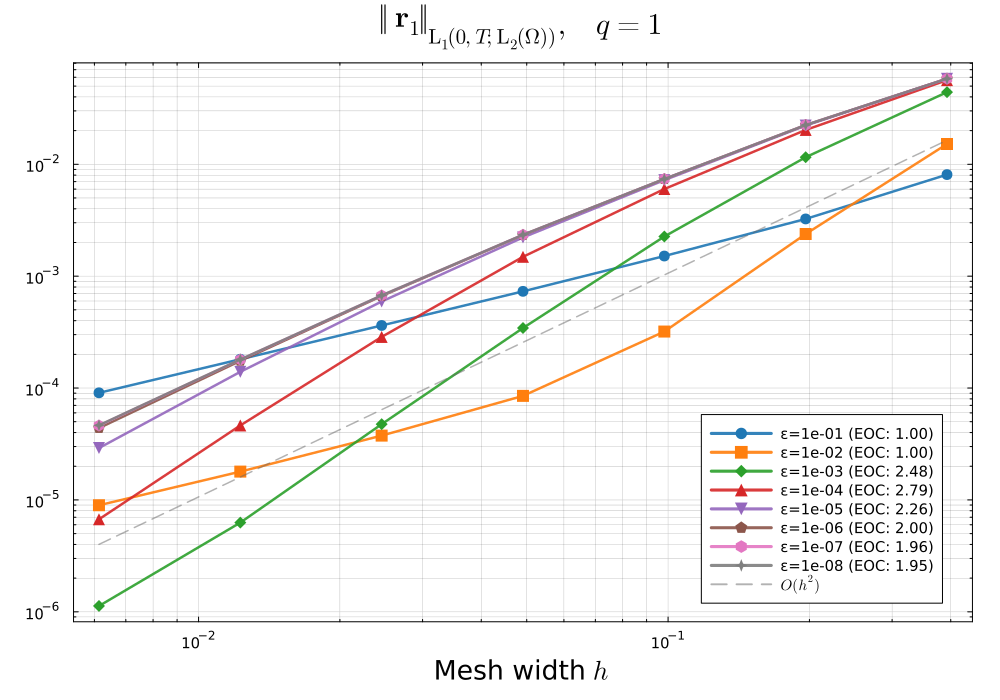}
\end{minipage}
\hfill
\begin{minipage}{0.48\textwidth}
\centering
\includegraphics[width=\textwidth]{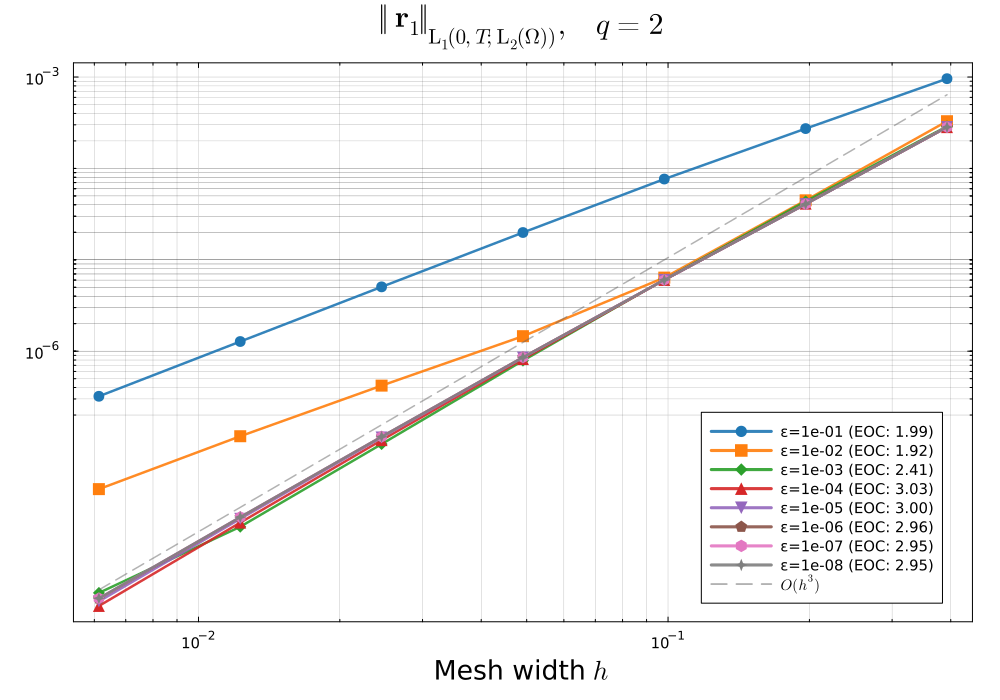}
\end{minipage}
\caption{Residual norm $\Norm{r_1}_{\leb{1}(0,T;\leb{2}(\W))}$ for linear advection-diffusion equation \eqref{eq:linear-adv-diff} for polynomial degrees $q=1$ (left) and $q=2$ (right)}
\label{fig:linear_r1}
\end{figure}

\begin{figure}[!ht]
\centering
\begin{minipage}{0.48\textwidth}
\centering
\includegraphics[width=\textwidth]{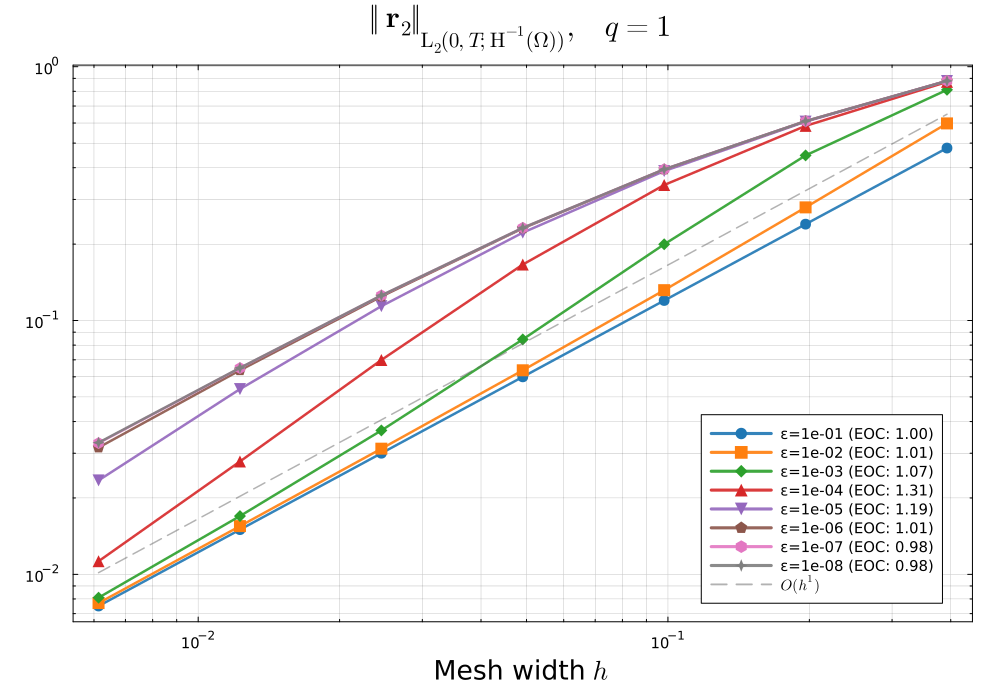}
\end{minipage}
\hfill
\begin{minipage}{0.48\textwidth}
\centering
\includegraphics[width=\textwidth]{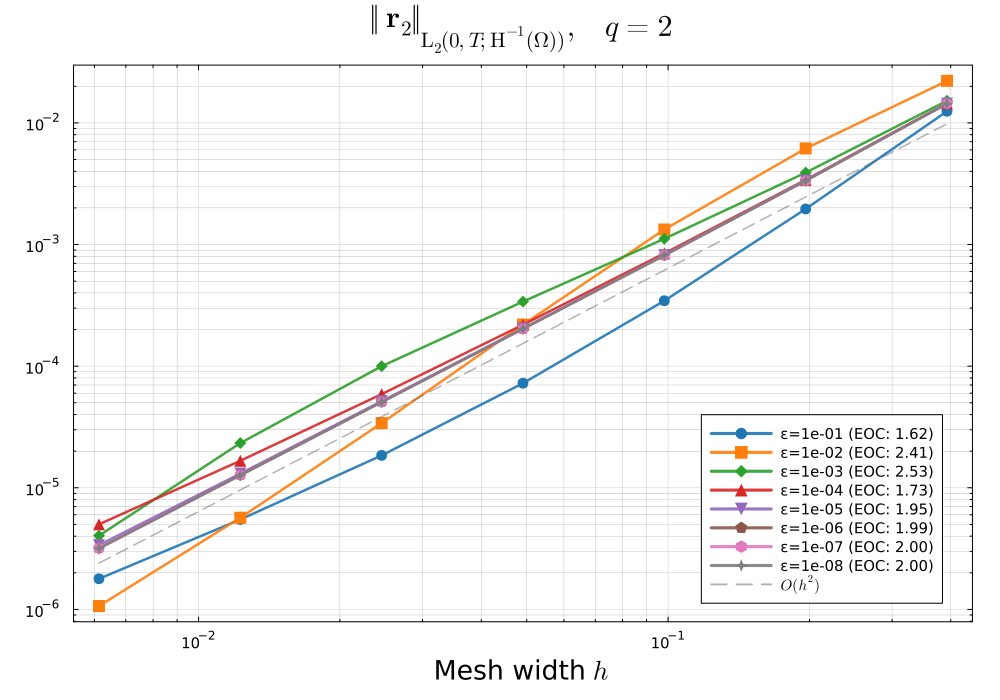}
\end{minipage}
\caption{Residual norm $\Norm{r_2}_{\leb{2}(0,T;\sobh{-1}(\W))}$ for linear advection-diffusion equation \eqref{eq:linear-adv-diff} for polynomial degrees $q=1$ (left) and $q=2$ (right)}
\label{fig:linear_r2}
\end{figure}

\subsection{Viscous Burgers equation}

In this subsection, we present numerical experiments for the viscous Burgers equation to validate the a posteriori error estimators derived in Corollary \ref{cor:nlsapost}.
Consider the viscous Burgers equation
\begin{equation}\label{eq:viscous-burgers}
  \partial_t u + u\partial_x u = \veps \partial_{xx} u + f(x,t)
\end{equation}
on $\W = [0, 2\pi]$ with initial data $u(x,0) = \sin{x}$. Using the method of manufactured solutions, we set the exact solution as
\begin{equation*}
  u(x,t) = (1 + 0.1 \sin{4\pi t})\sin{x - t},
\end{equation*}
with the source term $f(x,t)$ constructed accordingly. For this problem, the time step is set as $\Delta t = 0.033h$.

\subsubsection{Convergence rates of the RKdG solution}
The numerical solution demonstrates optimal convergence in the advection-dominated regime ($\veps \leq 10^{-6}$), achieving the optimal rate of $\mathcal{O}(h^{q+1})$ in the $\leb{\infty}(0,T;\leb{2}(\W))$-norm and $\mathcal{O}(h^{q})$ in the dG energy norm, as shown in Figures \ref{fig:burgers_rkdg_error} and \ref{fig:burgers_rkdg_error_mesh}, respectively.
Note that, consistent with the linear case, the $q=2$ case exhibits order reduction in the diffusion-dominated regime ($\veps = 10^{-1}, 10^{-2}$) with respect to the $\leb{\infty}(0,T;\leb{2}(\W))$-norm.

\begin{figure}[!ht]
\centering
\begin{minipage}{0.48\textwidth}
\centering
\includegraphics[width=\textwidth]{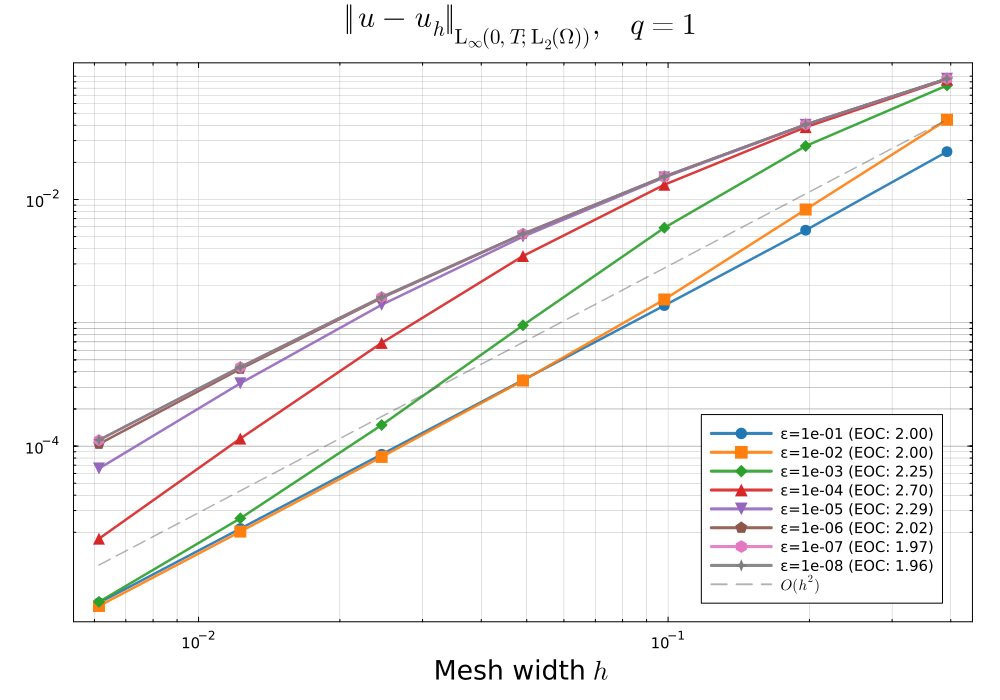}
\end{minipage}
\hfill
\begin{minipage}{0.48\textwidth}
\centering
\includegraphics[width=\textwidth]{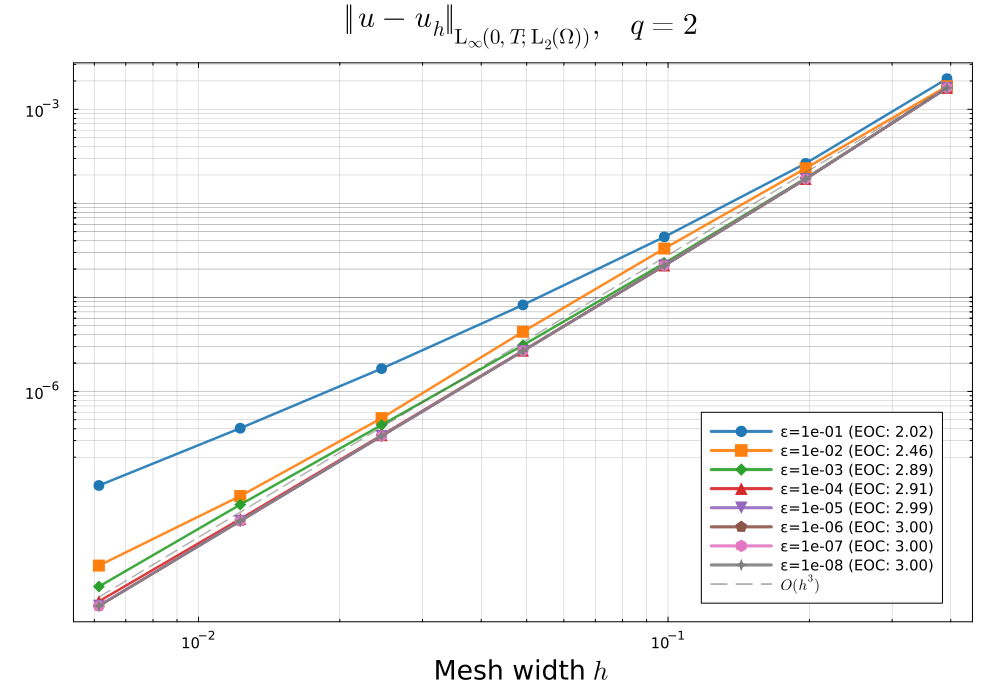}
\end{minipage}
\caption{RKdG solution error in $\leb{\infty}(0,T;\leb{2}(\W))$-norm for viscous Burgers equation \eqref{eq:viscous-burgers} for polynomial degrees $q=1$ (left) and $q=2$ (right)}
\label{fig:burgers_rkdg_error}
\end{figure}

\begin{figure}[!ht]
\centering
\begin{minipage}{0.48\textwidth}
\centering
\includegraphics[width=\textwidth]{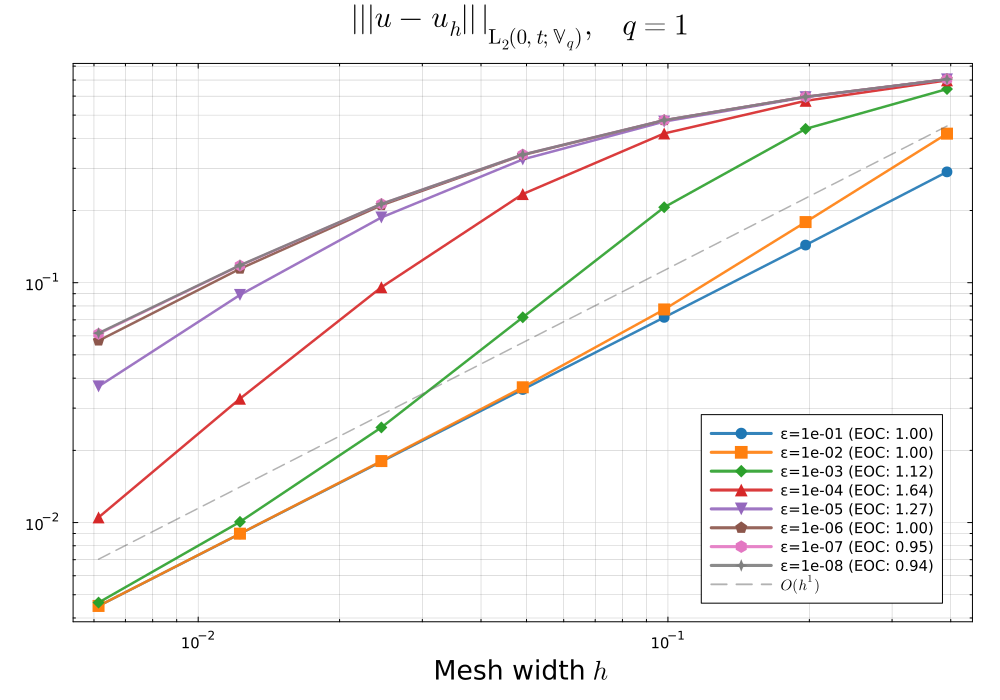}
\end{minipage}
\hfill
\begin{minipage}{0.48\textwidth}
\centering
\includegraphics[width=\textwidth]{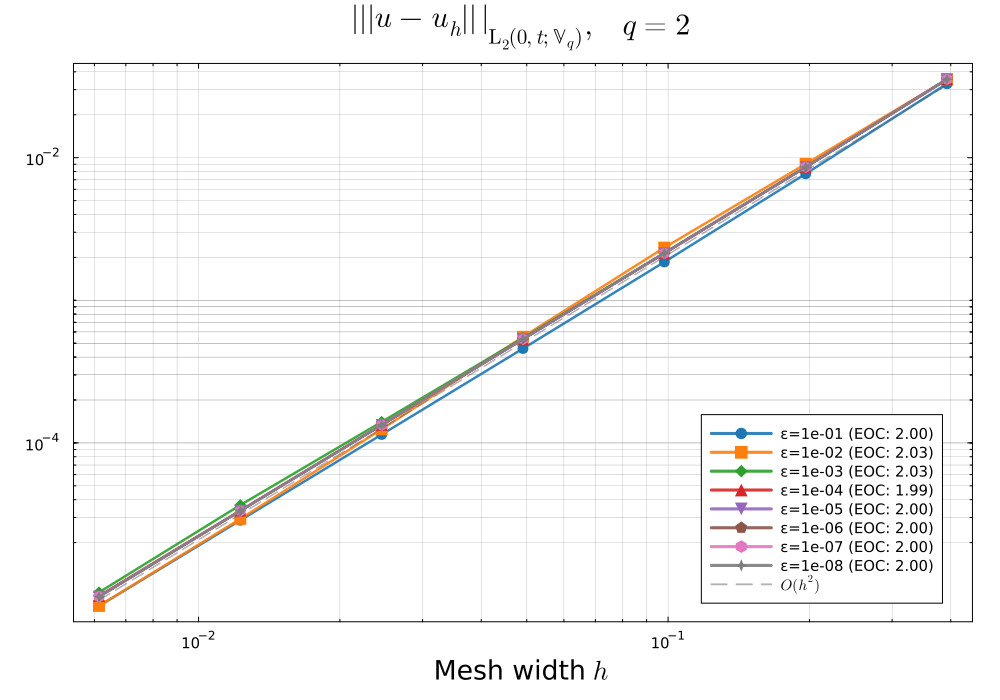}
\end{minipage}
\caption{RKdG solution error in dG energy norm for viscous Burgers equation \eqref{eq:viscous-burgers} for polynomial degrees $q=1$ (left) and $q=2$ (right)}
\label{fig:burgers_rkdg_error_mesh}
\end{figure}

\subsubsection{Optimality and robustness of the a posteriori estimators}

The hyperbolic residual $\Norm{r_1}_{\leb{1}(0,T;\leb{2}(\W))}$ serves as an optimal indicator for the $\leb{\infty}(0,T;\leb{2}(\W))$-norm error in the advection-dominated regime ($\veps \leq 10^{-6}$).
Figure \ref{fig:burgers_r1} shows that $r_1$ achieves the $\mathcal{O}(h^{q+1})$ convergence rate of the true error and remains bounded as $\veps \to 0$, confirming $\veps$-robustness.
Note that for $\veps = 10^{-1}$, the EOC for $q=1$ is observed to be one order lower than the optimal rate of $\mathcal{O}(h^2)$.

Similarly, the parabolic residual $\Norm{r_2}_{\leb{2}(0,T;\sobh{-1}(\W))}$ behaves as an optimal indicator for the dG energy norm error in the advection-dominated regime. 
Figure \ref{fig:burgers_r2} shows that the convergence rates are consistent with the $\mathcal{O}(h^q)$ rates of the energy norm error and confirms the $\veps$-robustness of $r_2$.

\begin{figure}[!ht]
\centering
\begin{minipage}{0.48\textwidth}
\centering
\includegraphics[width=\textwidth]{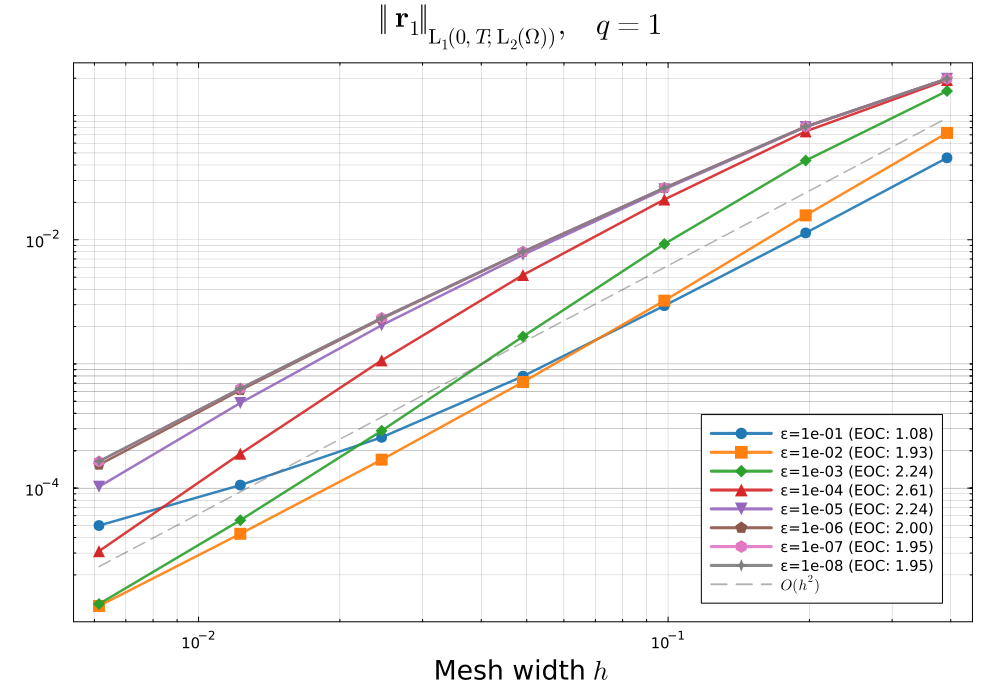}
\end{minipage}
\hfill
\begin{minipage}{0.48\textwidth}
\centering
\includegraphics[width=\textwidth]{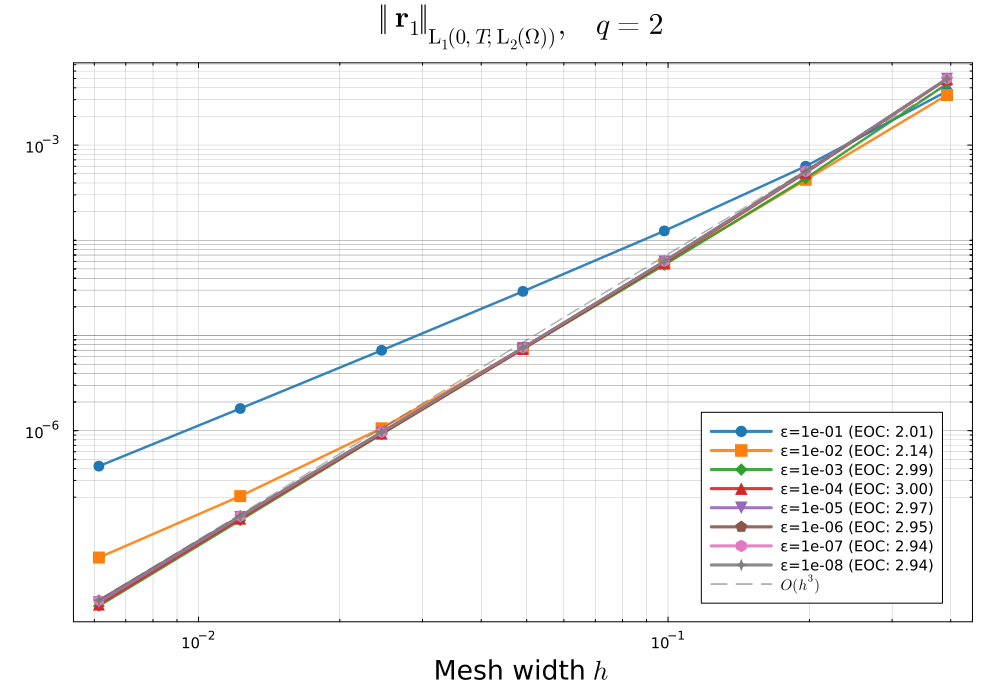}
\end{minipage}
\caption{Residual norm $\Norm{r_1}_{\leb{1}(0,T;\leb{2}(\W))}$ for viscous Burgers equation \eqref{eq:viscous-burgers} for polynomial degrees $q=1$ (left) and $q=2$ (right)}
\label{fig:burgers_r1}
\end{figure}

\begin{figure}[!ht]
\centering
\begin{minipage}{0.48\textwidth}
\centering
\includegraphics[width=\textwidth]{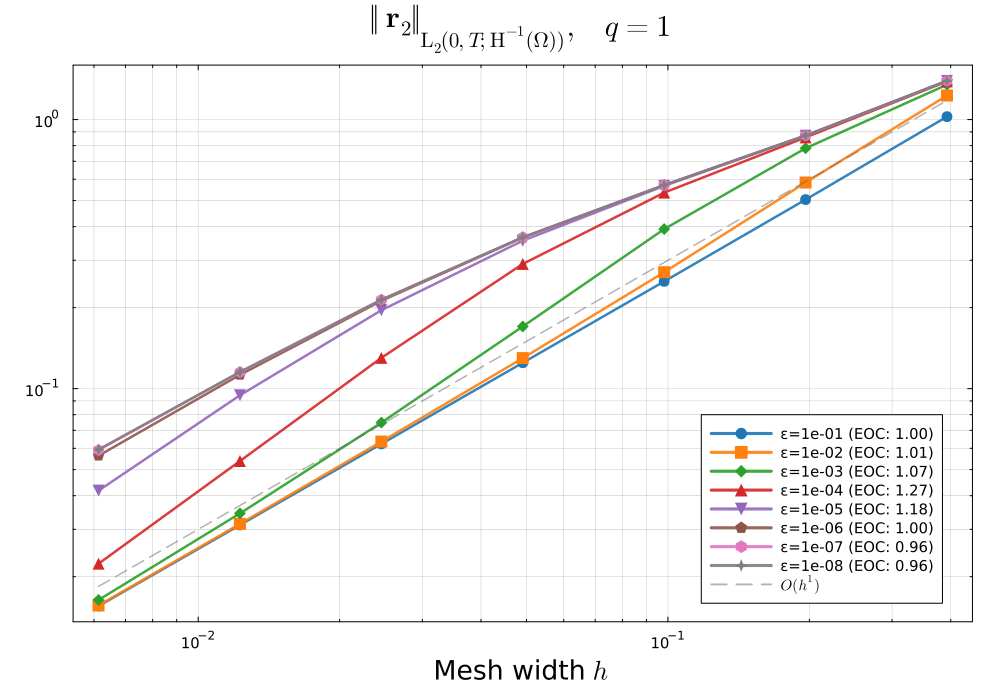}
\end{minipage}
\hfill
\begin{minipage}{0.48\textwidth}
\centering
\includegraphics[width=\textwidth]{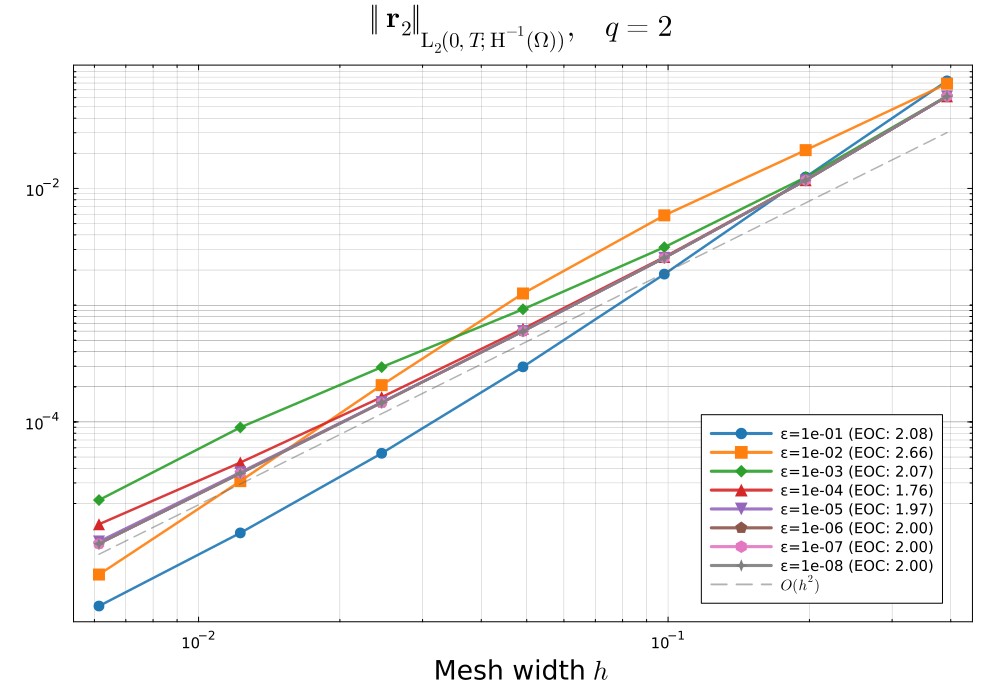}
\end{minipage}
\caption{Residual norm $\Norm{r_2}_{\leb{2}(0,T;\sobh{-1}(\W))}$ for viscous Burgers equation \eqref{eq:viscous-burgers} for polynomial degrees $q=1$ (left) and $q=2$ (right)}
\label{fig:burgers_r2}
\end{figure}

\subsection{Nonlinear wave equation with degenerate diffusion}

In this subsection, we present numerical experiments for the nonlinear wave equation with degenerate diffusion to validate the a posteriori error estimators derived in Corollary \ref{cor:nlsysapost}.
Consider the nonlinear system
\begin{equation}\label{eq:nonlinear-wave}
  \begin{aligned}
    \partial_t u - \partial_x v &= f_1(t,x), \\
    \partial_t v - \partial_x W'(u) &= \veps \partial_{xx} v + f_2(t,x),
  \end{aligned}
\end{equation}
on $\W = [0, 2\pi]$ with $W'(u) = -u^{-\gamma}$, $\gamma = 1.4$. Using the method of manufactured solutions, we set the exact solution as
\begin{equation*}
  \begin{aligned}
    u(t,x) &= 2.0 + 0.2\sin{2x - t}, \\
    v(t,x) &= 1.0 + 0.3\cos{x + 2t},
  \end{aligned}
\end{equation*}
with source terms $f_1$ and $f_2$ constructed accordingly. For this problem, the time step is set as $\Delta t = 0.1h$.

\subsubsection{Convergence rates of the RKdG solution}
The numerical solution demonstrates optimal convergence in the advection-dominated regime ($\veps \leq 10^{-6}$), achieving the optimal rate of $\mathcal{O}(h^{q+1})$ in the $\leb{\infty}(0,T;\leb{2}(\W))$-norm and $\mathcal{O}(h^{q})$ in the dG energy norm $\enorm{v - v_h}_{\leb{2}(0,T;\mathbb{V}_q)}$ (evaluated for the second component $v$ where diffusion acts), as shown in Figures \ref{fig:nw_rkdg_error} and \ref{fig:nw_rkdg_error_mesh}, respectively.
Note that, consistent with the previous cases, the $q=2$ case exhibits order reduction in the diffusion-dominated regime ($\veps = 10^{-1}, 10^{-2}$), degrading from third to second order in the $\leb{\infty}(0,T;\leb{2}(\W))$-norm.

\begin{figure}[!ht]
\centering
\begin{minipage}{0.48\textwidth}
\centering
\includegraphics[width=\textwidth]{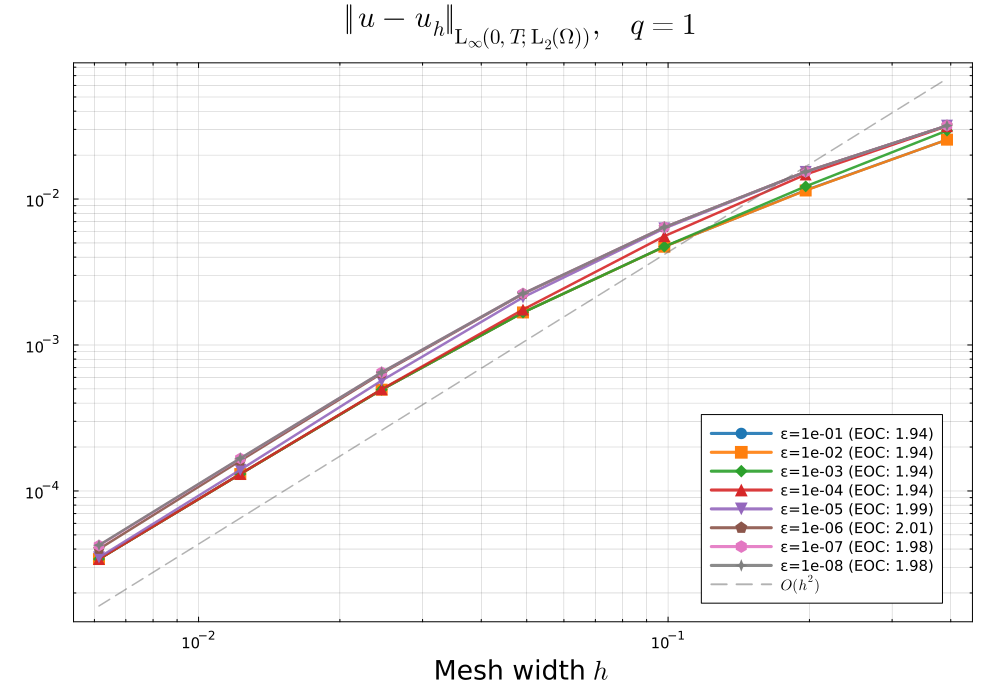}
\end{minipage}
\hfill
\begin{minipage}{0.48\textwidth}
\centering
\includegraphics[width=\textwidth]{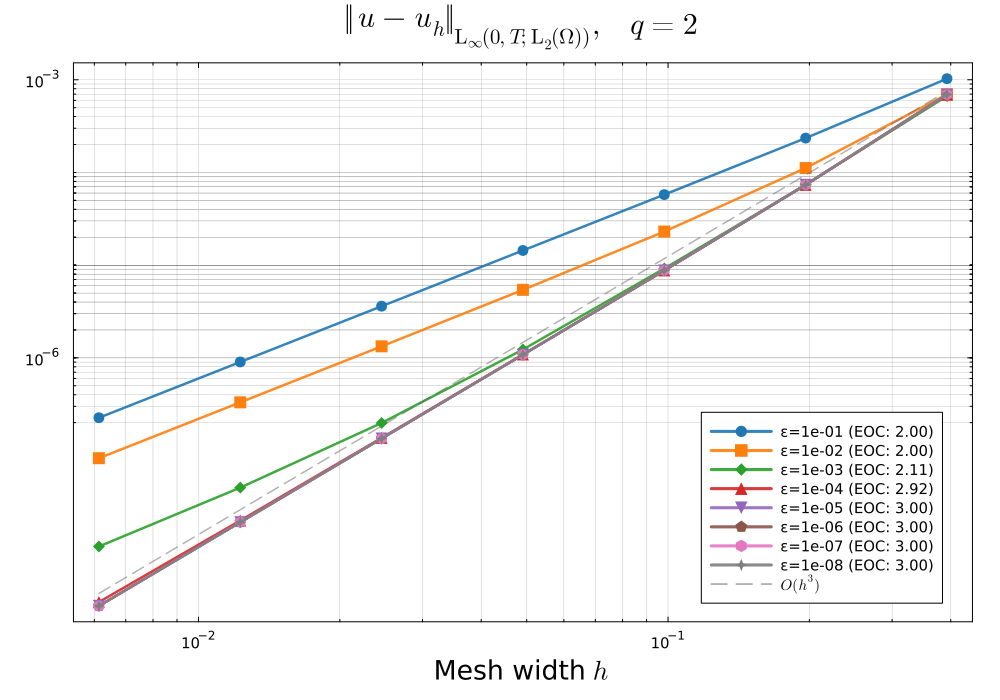}
\end{minipage}
\caption{RKdG solution error in $\leb{\infty}(0,T;\leb{2}(\W))$-norm for nonlinear wave equation \eqref{eq:nonlinear-wave} for polynomial degrees $q=1$ (left) and $q=2$ (right)}
\label{fig:nw_rkdg_error}
\end{figure}

\begin{figure}[!ht]
\centering
\begin{minipage}{0.48\textwidth}
\centering
\includegraphics[width=\textwidth]{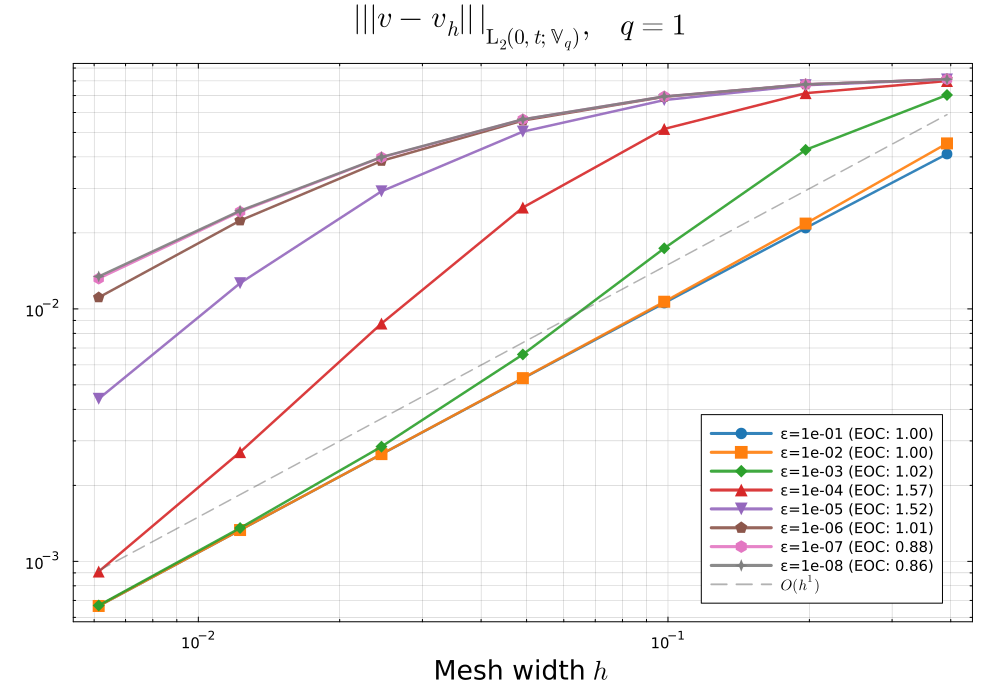}
\end{minipage}
\hfill
\begin{minipage}{0.48\textwidth}
\centering
\includegraphics[width=\textwidth]{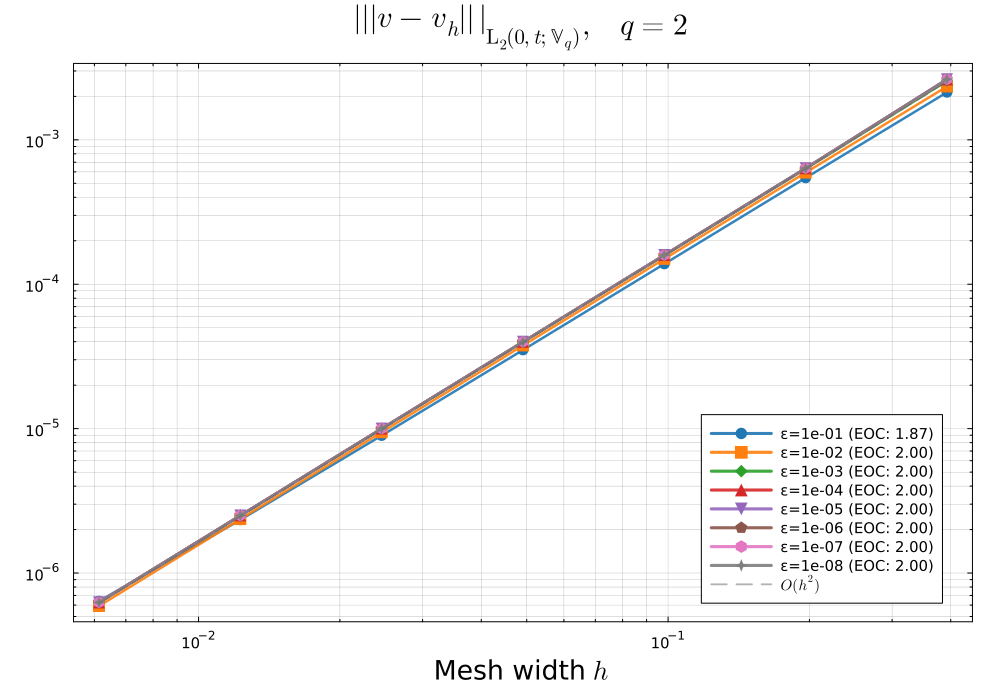}
\end{minipage}
\caption{RKdG solution error in dG energy norm (second component) for nonlinear wave equation \eqref{eq:nonlinear-wave} for polynomial degrees $q=1$ (left) and $q=2$ (right)}
\label{fig:nw_rkdg_error_mesh}
\end{figure}

\subsubsection{Optimality and robustness of the a posteriori estimators}
The hyperbolic residual $\vec{r}_1$ serves as an optimal indicator for the $\leb{\infty}(0,T;\leb{2}(\W))$-norm error in the advection-dominated regime ($\veps \leq 10^{-5}$).
Figure \ref{fig:nw_r1} shows that $\vec{r}_1$ achieves the $\mathcal{O}(h^{q+1})$ convergence rate of the true error and remains bounded as $\veps \to 0$, confirming $\veps$-robustness.
Note that for $\veps = 10^{-1}$, the EOC matches the RKdG solution error in the $\leb{\infty}(0,T;\leb{2}(\W))$-norm when $q=1$ but exhibits one order reduction when $q=2$.

Here, the parabolic residual $\vec{r}_2$ contains only the $v$-component and serves as an indicator of the error in the dG energy norm of the $v$ component.
Figure \ref{fig:nw_r2} shows that this indicator achieves the optimal convergence rates of $\mathcal{O}(h)$ for $q=1$ and $\mathcal{O}(h^{2})$ for $q=2$ in the advection-dominated regime and confirms the indicator's $\veps$-robustness.
On the other hand, for $\veps = 10^{-1}, 10^{-2}$ with $q=2$, the residual $\vec{r}_2$ exhibits an EOC one order lower than the optimal rate of $\mathcal{O}(h^2)$.

\begin{figure}[!ht]
\centering
\begin{minipage}{0.48\textwidth}
\centering
\includegraphics[width=\textwidth]{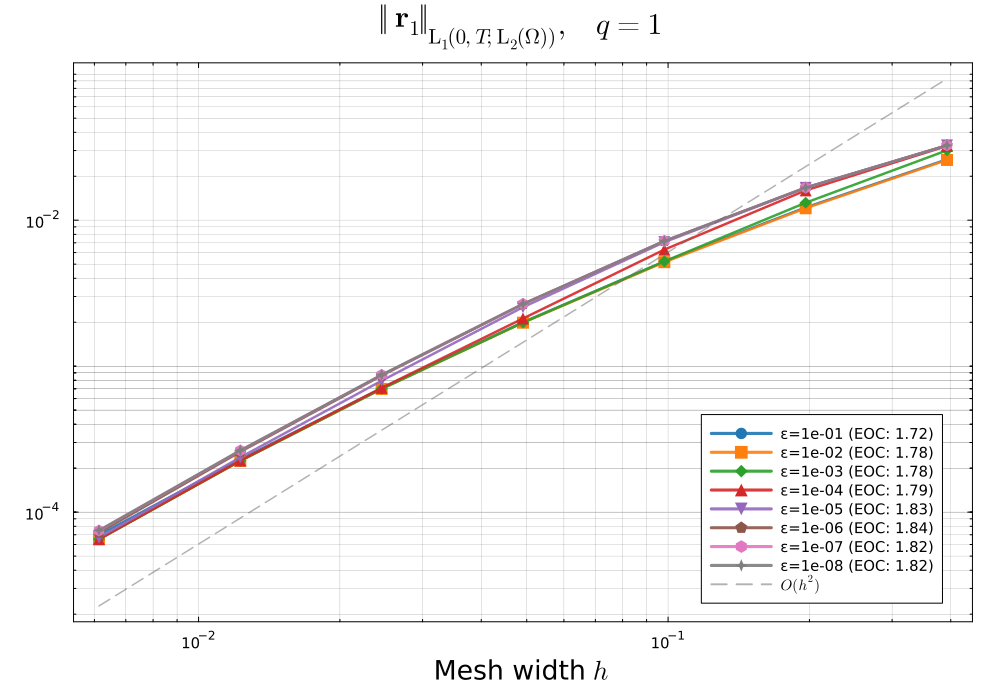}
\end{minipage}
\hfill
\begin{minipage}{0.48\textwidth}
\centering
\includegraphics[width=\textwidth]{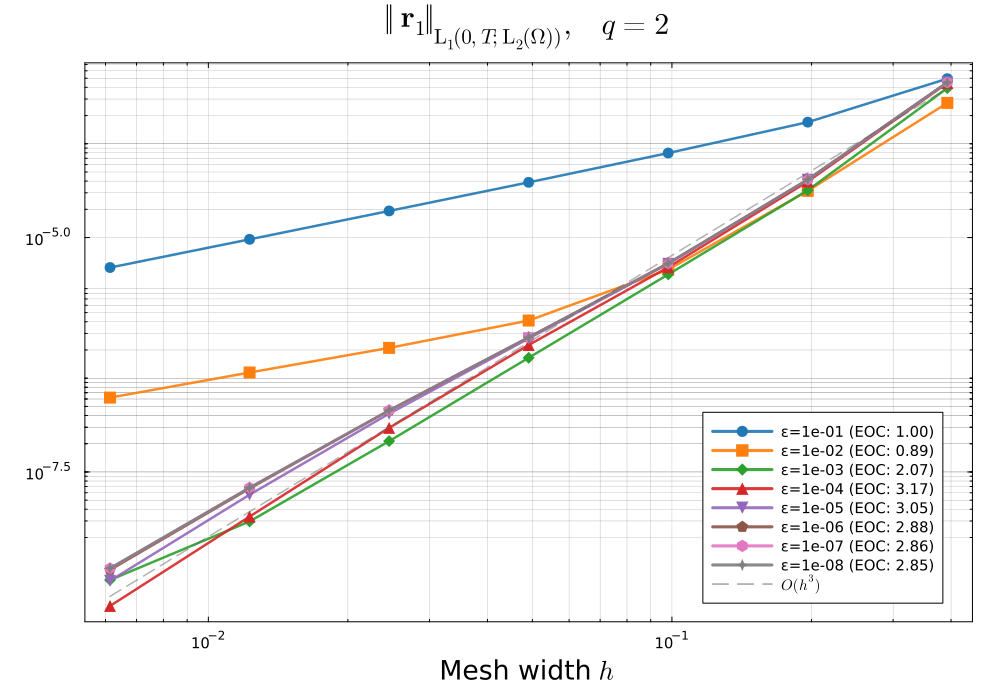}
\end{minipage}
\caption{Residual $\Norm{\vec r_1}_{\leb{1}(0,T;\leb{2}(\W))}$ for nonlinear wave equation \eqref{eq:nonlinear-wave} for polynomial degrees $q=1$ (left) and $q=2$ (right)}
\label{fig:nw_r1}
\end{figure}

\begin{figure}[!ht]
\centering
\begin{minipage}{0.48\textwidth}
\centering
\includegraphics[width=\textwidth]{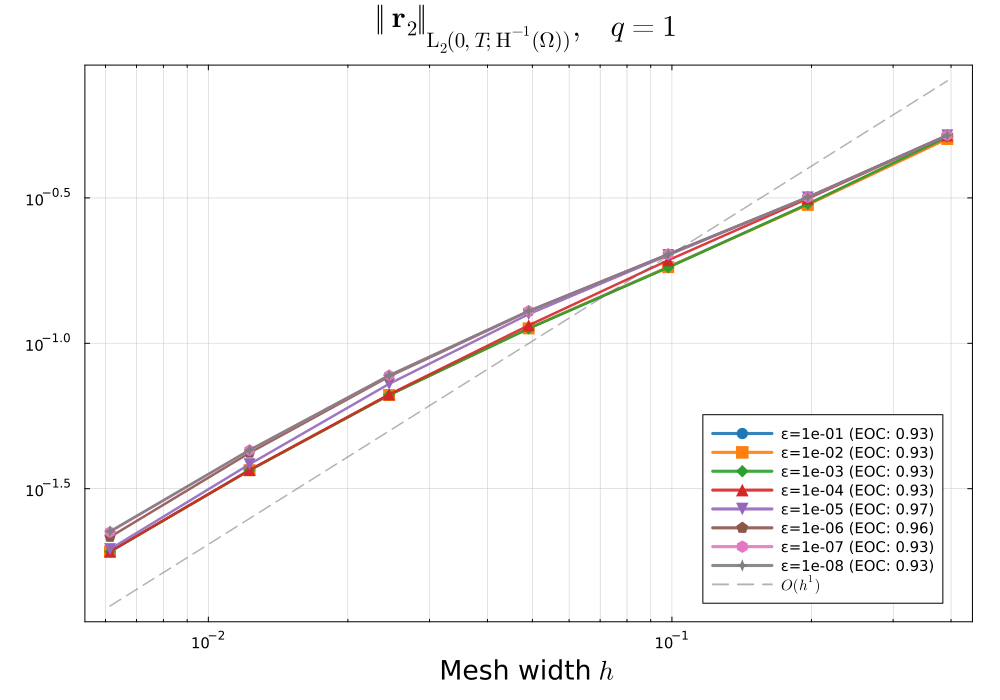}
\end{minipage}
\hfill
\begin{minipage}{0.48\textwidth}
\centering
\includegraphics[width=\textwidth]{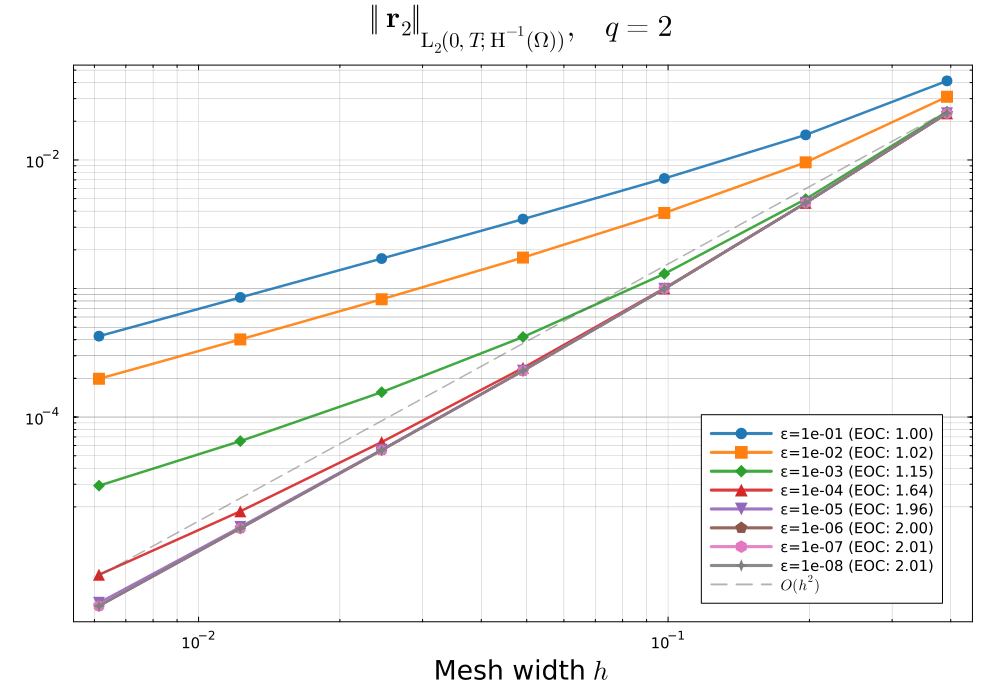}
\end{minipage}
\caption{Residual $\Norm{\vec r_2}_{\leb{2}(0,T;\sobh{-1}(\W))}$ for nonlinear wave equation \eqref{eq:nonlinear-wave} for polynomial degrees $q=1$ (left) and $q=2$ (right)}
\label{fig:nw_r2}
\end{figure}

\appendix

\section{Technical lemmas for boundary terms}
\label{app:boundary-lemma}

In this appendix, we prove Lemma \ref{lem:boundary-est} concerning the boundary term estimates. 

\begin{proof}[Proof of Lemma \ref{lem:boundary-est}]
  Since $\zeta(t, \cdot) \in \sobh{1}_0(\W)$ vanishes at $x=0$, we have
  $\zeta(t,x) = \int_0^x \partial_x \zeta(t,y) \d y$.
  For the first part of \eqref{boundarylemma}, applying Cauchy-Schwarz yields
  \begin{align*}
    \left| \frac{1}{\delta} \int_0^s \int_0^\delta \zeta(t,x) \xi(t,x) \d x \d t\right|
    &= \left| \frac{1}{\delta} \int_0^s \int_0^\delta \left(\int_0^x \partial_x \zeta(t,y) \d y\right) \xi(t,x) \d x \d t\right|\\
    &\leq \frac{1}{\delta} \int_0^s \left( \int_0^\delta x \int_0^\delta |\partial_x \zeta(t,y)|^2 \d y \d x \right)^{1/2} \left(\int_0^\delta |\xi(t,x)|^2 \d x \right)^{1/2} \d t\\
    &= \frac{1}{\sqrt{2}}\int_0^s \left( \int_0^\delta |\partial_x \zeta(t,y)|^2 \d y \right)^{1/2} \left(\int_0^\delta |\xi(t,x)|^2 \d x \right)^{1/2} \d t \to 0
  \end{align*}
  as $\delta \to 0$, since the integrals vanish on shrinking intervals.

  Similarly, for the second part of \eqref{boundarylemma}:
  \begin{align*}
    \int_0^s \frac{1}{\delta^2} \int_0^\delta |\zeta(t,x)|^2 \d x \d t
    &\leq \int_0^s \frac{1}{\delta^2} \int_0^\delta x \int_0^\delta |\partial_x \zeta(t,y)|^2 \d y \d x \d t \\
    &= \frac{1}{2}\int_0^s \int_0^\delta |\partial_x \zeta(t,y)|^2 \d y \d t \to 0
  \end{align*}
  as $\delta \to 0$.
\end{proof}

\section{Estimates of the $H^{-1}$ norm of the parabolic component of the residual} \label{sec:comp}

This appendix establishes computable bounds for the $\sobh{-1}$ norm of the parabolic residual component used in the numerical experiments of \S\ref{sec:num}. We use the index-based notation from Section 2, where the jump and average operators at mesh point $x_i$ are denoted by $\jump{\cdot}_i$ and $\avg{\cdot}_i$ respectively. Following the periodic boundary conditions from \S\ref{sec:nsr}, the interface summations identify $x_0$ with $x_M$ on the domain $\mathbb{T}^1$.

\begin{theorem}\label{thm:h-1_est}
For the residual decomposition $\vec r = \vec r_{1} + \veps\vec r_{2}$ from \S\ref{sec:dres}, with constant diffusion coefficient $\vec{A} \equiv 1$, the $\sobh{-1}$ norm of the parabolic component $\vec r_2$ satisfies
\begin{equation*}
  \lVert \vec r_2 \rVert_{\sobh{-1}(\mathbb{T}^1)} \leq C \left[ \theta_1 + \theta_2 + \theta_3 \right] \qquad \text{a.e.} \ t \in [0,T],
\end{equation*}
where the computable estimators are
\begin{align}
  \theta_1 &:= \lVert \partial_x \widehat{\vec{u}}^{ts} - \partial_x \widehat{\vec{u}}_h^{t} \rVert_{\leb{2}(\mathbb{T}^1)}, \label{eq:theta1}\\
  \theta_2 &:= \left( \sum_{i=0}^{M-1} h_i^{-1} |\jump{\widehat{\vec{u}}_h^{t}}_i|^2 \right)^{1/2}, \\
  \theta_3 &:= \left( \sum_{i=0}^{M-1} h_i |\jump{\partial_x \widehat{\vec{u}}_h^{t}}_i|^2 \right)^{1/2}. \label{eq:theta3}
\end{align}
The constant $C$ depends on the trace constant $C_{\text{tr}}$, the approximation constant $C_{\text{app}}$, the mesh regularity constant $C_{\text{reg}}$ from \eqref{eq:mesh-regularity}, and the penalty parameter $\sigma$ from the SIP discretization.
\end{theorem}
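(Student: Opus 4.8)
The plan is to argue by duality: since
$\Norm{\vec r_2}_{\sobh{-1}(\mathbb{T}^1)} = \sup\{\langle \vec r_2,\phi\rangle : \phi \in \sobh{1}(\mathbb{T}^1,\reals^m),\ \Norm{\phi}_{\sobh{1}(\mathbb{T}^1)}\le 1\}$, it suffices to bound $\langle\vec r_2,\phi\rangle$ for a generic test function $\phi$, which in one space dimension is continuous. Fix $t\in[0,T]$; all objects below are evaluated at this time. With $\vec A\equiv 1$ we have $\vec r_2 = \bD(\vut) - \partial_{xx}\strec$ distributionally, and $\bD(\vut)\in(\fes_q^s)^m$. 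First I would integrate the distributional term by parts, obtaining $\langle\vec r_2,\phi\rangle = \int_{\mathbb{T}^1}\bD(\vut)\cdot\phi\,\d x + \int_{\mathbb{T}^1}\partial_x\strec\cdot\partial_x\phi\,\d x$.

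The heart of the proof is an algebraic reduction of this expression. Write $\mathbb{P}$ for the $\leb{2}$-projection onto $(\fes_q^s)^m$ and $\mathcal{L}_h$ for the elementwise second spatial derivative of a broken polynomial. Since $\bD(\vut)\in(\fes_q^s)^m$, I may replace $\phi$ by $\mathbb{P}\phi$ in the first integral; unfolding the definition of $\bD$ and integrating by parts element by element --- where the algebraic identity $a^-b^- - a^+b^+ = \avg{a}\jump{b} + \jump{a}\avg{b}$ causes the consistency terms $\avg{\partial_x\vut}\cdot\jump{\mathbb{P}\phi}$ to cancel, and $\mathcal{L}_h\vut\in(\fes_q^s)^m$ lets me undo the projection on the bulk term --- yields
\[
\int_{\mathbb{T}^1}\bD(\vut)\cdot\phi\,\d x = \int_{\mathbb{T}^1}\mathcal{L}_h\vut\cdot\phi\,\d x - \sum_i\jump{\partial_x\vut}_i\cdot\avg{\mathbb{P}\phi}_i - \sum_i\frac{\sigma}{h_i}\jump{\vut}_i\cdot\jump{\mathbb{P}\phi}_i
\]
(a symmetric interior-penalty $\bD$ produces one further term $\sum_i\jump{\vut}_i\cdot\avg{\partial_x\mathbb{P}\phi}_i$, estimated exactly like the $\theta_2$-contributions below). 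Separately, integrating $\int_{\mathbb{T}^1}\partial_x\strec\cdot\partial_x\phi$ by parts elementwise and using that $\phi$ is continuous gives $-\int_{\mathbb{T}^1}\mathcal{L}_h\strec\cdot\phi\,\d x + \sum_i\jump{\partial_x\strec}_i\cdot\phi(x_i)$. Adding the two, combining the broken Laplacians into $\int_{\mathbb{T}^1}\mathcal{L}_h(\vut-\strec)\cdot\phi\,\d x$ and integrating \emph{that} once more by parts, the terms $\sum_i\jump{\partial_x\strec}_i\cdot\phi(x_i)$ cancel, leaving precisely
\[
\langle\vec r_2,\phi\rangle = \sum_i\jump{\partial_x\vut}_i\cdot\bigl(\phi(x_i)-\avg{\mathbb{P}\phi}_i\bigr) - \sum_{j}\int_{I_j}\partial_x(\vut-\strec)\cdot\partial_x\phi\,\d x - \sum_i\frac{\sigma}{h_i}\jump{\vut}_i\cdot\jump{\mathbb{P}\phi}_i =: T_1 + T_2 + T_3 .
\]

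With this identity the estimates are standard. For $T_2$, Cauchy--Schwarz gives $|T_2|\le\theta_1\Norm{\partial_x\phi}_{\leb{2}(\mathbb{T}^1)}$. For $T_1$ and $T_3$, a Cauchy--Schwarz over the interface index $i$ with the weights $h_i$ and $h_i^{-1}$ respectively peels off $\theta_3$ and $\theta_2$, and leaves the factors $\bigl(\sum_i h_i^{-1}|\phi(x_i)-\avg{\mathbb{P}\phi}_i|^2\bigr)^{1/2}$ and $\bigl(\sum_i h_i^{-1}|\jump{\mathbb{P}\phi}_i|^2\bigr)^{1/2}$; because $\phi$ is continuous both are built from the interface-node values of the $\leb{2}$-projection error $\phi-\mathbb{P}\phi$, and the scaled trace inequality ($C_{\text{tr}}$) together with the $\leb{2}$-approximation and $\sobh{1}$-stability of the elementwise $\leb{2}$-projection ($C_{\text{app}}$) and the mesh regularity \eqref{eq:mesh-regularity} ($C_{\text{reg}}$, used to compare neighbouring element sizes) bound each by $C\Norm{\partial_x\phi}_{\leb{2}(\mathbb{T}^1)}$. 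Collecting the three bounds, $|\langle\vec r_2,\phi\rangle|\le C(\theta_1+\theta_2+\theta_3)\Norm{\phi}_{\sobh{1}(\mathbb{T}^1)}$ with $C$ depending only on $C_{\text{tr}},C_{\text{app}},C_{\text{reg}},\sigma$, and taking the supremum over the unit ball proves the claim. The same computation applies componentwise to the degenerate systems of \S\ref{sec:lw}--\S\ref{sec:nw}, where $\vec r_2$ carries only the $v$-component and the diffusion acting on it is the identity.

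The part I expect to be delicate is exactly this algebraic reduction, not the final estimates. Taken individually the ingredients are large: $\bD(\vut)$ behaves like $h^{-2}$ in $\leb{2}$, and $\partial_x\vut$ and $\mathcal{L}_h\strec$ are $\Oh(1)$, so any bound that routes through $\Norm{\bD(\vut)}_{\leb{2}(\mathbb{T}^1)}$ or $\Norm{\mathcal{L}_h(\vut-\strec)}_{\leb{2}(\mathbb{T}^1)}$ loses a power of $h$ and produces an estimator that does not converge. Everything hinges on pairing the two elementwise-Laplacian contributions and integrating by parts so that the $\Oh(1)$ bulk parts cancel, leaving only the interface jumps $\jump{\partial_x\vut}$ and $\jump{\vut}$ --- which are of size $\Oh(h^{1/2})$ in the weighted norms that precisely match $\theta_3$ and $\theta_2$ --- and the genuinely small broken-gradient difference $\partial_x(\vut-\strec)$, which is $\theta_1$. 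A secondary technical point is verifying that the $\leb{2}$-projection error of the merely-$\sobh{1}$ (hence continuous) test function picks up exactly a factor $h_i^{1/2}$ at each interface node, which is what makes the weighted Cauchy--Schwarz against $\theta_2$ and $\theta_3$ close.
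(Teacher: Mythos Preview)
Your proposal is correct and arrives at exactly the same four-term decomposition as the paper (their $T_1,T_{2a},T_{2b},T_3$ are your $T_2$, the parenthetical symmetric term, $T_1$, and $T_3$, respectively, up to an irrelevant global sign), after which the Cauchy--Schwarz / trace / projection estimates are identical. The only difference is cosmetic: the paper reaches the decomposition by a single elementwise integration by parts on $\int\partial_x\vut\cdot\partial_x(\phi-\mathbb{P}\phi)$, whereas you make a detour through the broken Laplacians $\mathcal{L}_h\vut$ and $\mathcal{L}_h\strec$ and integrate by parts back---this round trip is unnecessary but harmless.
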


The proof exploits the specific structure of the residual components. Recall from \eqref{def_rec} that
\begin{align*}
\vec{r}_1 &:=\partial_t \widehat{\vec{u}}^{t s}+\partial_x \vec{f}(\widehat{\vec{u}}^{t s})-\veps \vec{\mathfrak { A }}_h(\widehat{\vec{u}}_h^{t}) \in \leb{2}((0, T) \times \W, \reals^m), \\
\vec{r}_2 &:=\vec{\mathfrak { A }}_h(\widehat{\vec{u}}_h^{t})-\partial_{xx} \widehat{\vec{u}}^{t s} \in \leb{2}(0, T ; \sobh{-1}(\W, \reals^m)).
\end{align*}
The assumption $\vec{A} \equiv 1$ simplifies the analysis while maintaining applicability to cases like the degenerate parabolic system in \S\ref{sec:nw}.

\subsection{Proof of Theorem \ref{thm:h-1_est}}

The proof exploits the definition of the symmetric interior penalty (SIP) discretization to establish the desired bound. We begin by expressing the dual pairing $\langle \vec r_2, \vec{\phi} \rangle$ for arbitrary $\vec{\phi} \in \sobh{1}_0(\mathbb{T}^1)$ in terms of the SIP bilinear form that defines the discrete diffusion operator $\vec{\mathfrak{A}}_h$.

The discrete diffusion operator $\boldsymbol{\mathfrak{A}}_h$ is defined via the SIP bilinear form: for $\vec \phi_h \in \mathbb{V}^s_{q}$,
\begin{equation*}
  -  \langle  \boldsymbol{\mathfrak{A}}_h (\trec_h) , \vec \phi_h \rangle = \int_{\mathbb{T}^1} \partial_x \trec_h \cdot \partial_x \vec \phi_h
  - \sum_{i=0}^{M-1} \Big( \jump{\trec_h}_i \cdot \avg{ \partial_x \vec \phi_h}_i 
  + \jump{\vec \phi_h}_i \cdot \avg{ \partial_x \trec_h}_i - \frac{\sigma}{h_i} \jump{\trec_h}_i \cdot \jump{ \vec \phi_h}_i\Big),
\end{equation*}
where $\sigma > 0$ is the penalty parameter and $\mathbb{P}$ denotes the $\leb{2}$-projection onto $\mathbb{V}^s_{q}$.

For any $\vec{\phi} \in \sobh{1}_0(\mathbb{T}^1)$, exploiting the $\leb{2}$-orthogonality of $\mathbb{P}$ (i.e., $\langle \vec{\mathfrak{A}}_h (\widehat{\vec{u}}_h^{t}), \vec{\phi} \rangle = \langle \vec{\mathfrak{A}}_h (\widehat{\vec{u}}_h^{t}), \mathbb{P}\vec{\phi} \rangle$) and applying integration by parts yields

\begin{equation*}
  \langle \partial_{xx}\widehat{\vec{u}}^{ts} - \vec{\mathfrak{A}}_h (\widehat{\vec{u}}_h^{t}) , \vec{\phi} \rangle = T_1 + T_2 + T_3,
\end{equation*}
where
\begin{align*}
  T_1 &:= - \int_{\mathbb{T}^1} (\partial_x \widehat{\vec{u}}^{ts} - \partial_x \widehat{\vec{u}}_h^{t}) \cdot \partial_x \vec{\phi} \, dx, \\
  T_2 &:= - \sum_{i=0}^{M-1} \Big( \jump{\widehat{\vec{u}}_h^{t}}_i \cdot \avg{\partial_x \mathbb{P}\vec{\phi}}_i - \jump{\partial_x \widehat{\vec{u}}_h^{t}}_i \cdot \avg{\vec{\phi} - \mathbb{P}\vec{\phi}}_i \Big), \\
  T_3 &:= \sum_{i=0}^{M-1}  \frac{\sigma}{h_i} \jump{\widehat{\vec{u}}_h^{t}}_i \cdot \jump{\mathbb{P}\vec{\phi}}_i.
\end{align*}

We now bound each of these terms. The first term $T_1$ satisfies
\begin{equation}\label{eq:T1_est}
  |T_1| \leq \lVert \partial_x \widehat{\vec{u}}^{ts} - \partial_x \widehat{\vec{u}}_h^{t} \rVert_{\leb{2}(\mathbb{T}^1)} \lVert \vec{\phi} \rVert_{\sobh{1}_0(\mathbb{T}^1)}
\end{equation}
by the Cauchy-Schwarz inequality.

For the interface terms $T_2$ and $T_3$, we decompose $T_2 = T_{2a} + T_{2b}$ where
\begin{align*}
  T_{2a} &:= - \sum_{i=0}^{M-1} \jump{\widehat{\vec{u}}_h^{t}}_i \cdot \avg{\partial_x \mathbb{P}\vec{\phi}}_i, \\
  T_{2b} &:= \sum_{i=0}^{M-1} \jump{\partial_x \widehat{\vec{u}}_h^{t}}_i \cdot \avg{\vec{\phi} - \mathbb{P}\vec{\phi}}_i.
\end{align*}

To bound these terms, we employ the discrete trace inequality \cite[Lemma 1.46]{Di-PietroErn:2012}:
for any mesh interval $I_j = (x_j, x_{j+1})$,
\begin{equation}\label{eq:trace-ineq}
|v(x_\ell)| \leq C_{\text{tr}} h_j^{-1/2} \lVert v \rVert_{\leb{2}(I_j)}, \quad \ell \in \{j, j+1\}
\end{equation}
and the approximation property of the $L^2$-projection on each interval $I_j$:
\begin{equation}\label{eq:approx-prop}
\lVert \vec{\phi} - \mathbb{P}\vec{\phi} \rVert_{\leb{2}(I_j)} \leq C_{\text{app}} h_j \lVert \vec{\phi} \rVert_{\sobh{1}_0(I_j)}.
\end{equation}

For $T_{2a}$, applying the Cauchy-Schwarz inequality with appropriate scaling gives
\begin{equation*}
|T_{2a}| \leq \sum_{i=0}^{M-1} |\jump{\widehat{\vec{u}}_h^{t}}_i| |\avg{\partial_x \mathbb{P}\vec{\phi}}_i|
\leq \left( \sum_{i=0}^{M-1} h_i^{-1} |\jump{\widehat{\vec{u}}_h^{t}}_i|^2 \right)^{1/2}
\left( \sum_{i=0}^{M-1} h_i |\avg{\partial_x \mathbb{P}\vec{\phi}}_i|^2 \right)^{1/2},
\end{equation*}
where the scaling factors $h_i^{-1/2}$ and $h_i^{1/2}$ have been introduced. To bound the second factor, we first decompose the average at each interface:
\begin{equation*}
|\avg{\partial_x \mathbb{P}\vec{\phi}}_i|^2 = \left|\frac{(\partial_x \mathbb{P}\vec{\phi})_i^- + (\partial_x \mathbb{P}\vec{\phi})_i^+}{2}\right|^2
\leq \frac{1}{2}\left(|(\partial_x \mathbb{P}\vec{\phi})_i^-|^2 + |(\partial_x \mathbb{P}\vec{\phi})_i^+|^2\right),
\end{equation*}
where $(\cdot)_i^-$ and $(\cdot)_i^+$ denote the left and right limits at $x_i$, respectively. Thus,
\begin{align*}
\sum_{i=0}^{M-1} h_i |\avg{\partial_x \mathbb{P}\vec{\phi}}_i|^2
&\leq \sum_{i=0}^{M-1} \frac{h_i}{2} \left(|(\partial_x \mathbb{P}\vec{\phi})_i^-|^2 + |(\partial_x \mathbb{P}\vec{\phi})_i^+|^2\right).
\end{align*}
Now, applying the trace inequality \eqref{eq:trace-ineq} to each term:
\begin{itemize}
\item The value $(\partial_x \mathbb{P}\vec{\phi})_i^-$ at the right endpoint of $I_{i-1}$ satisfies
  $|(\partial_x \mathbb{P}\vec{\phi})_i^-|^2 \leq C_{\text{tr}} h_{i-1}^{-1} \|\partial_x \mathbb{P}\vec{\phi}\|^2_{L^2(I_{i-1})}$
\item The value $(\partial_x \mathbb{P}\vec{\phi})_i^+$ at the left endpoint of $I_i$ satisfies
  $|(\partial_x \mathbb{P}\vec{\phi})_i^+|^2 \leq C_{\text{tr}} h_i^{-1} \|\partial_x \mathbb{P}\vec{\phi}\|^2_{L^2(I_i)}$
\end{itemize}
Therefore,
\begin{align*}
\sum_{i=0}^{M-1} h_i |\avg{\partial_x \mathbb{P}\vec{\phi}}_i|^2
&\leq \frac{C_{\text{tr}}}{2} \sum_{i=0}^{M-1} \left( \frac{h_i}{h_{i-1}} \|\partial_x \mathbb{P}\vec{\phi}\|^2_{L^2(I_{i-1})} + \|\partial_x \mathbb{P}\vec{\phi}\|^2_{L^2(I_i)} \right).
\end{align*}
Since each element $I_j$ contributes to at most two interfaces (its left and right endpoints), and using the mesh regularity from \eqref{eq:mesh-regularity}, we obtain
\begin{equation*}
\left( \sum_{i=0}^{M-1} h_i |\avg{\partial_x \mathbb{P}\vec{\phi}}_i|^2 \right)^{1/2}
\leq C \left(\sum_{j=0}^{M-1} \|\partial_x \mathbb{P}\vec{\phi}\|^2_{L^2(I_j)}\right)^{1/2}
= C \|\partial_x \mathbb{P}\vec{\phi}\|_{L^2(\Omega)}
\leq C(1+C_{\text{app}}) \lVert \vec{\phi} \rVert_{\sobh{1}_0(\W)},
\end{equation*}
where $C = C_{\text{tr}}\sqrt{C_{\text{reg}}}$ with $C_{\text{reg}}$ from \eqref{eq:mesh-regularity}.
Combining these estimates gives
\begin{equation}\label{eq:T2a_final}
  |T_{2a}| \leq C_1 \left( \sum_{i=0}^{M-1} h_i^{-1} |\jump{\widehat{\vec{u}}_h^{t}}_i|^2 \right)^{1/2} \lVert \vec{\phi} \rVert_{\sobh{1}_0(\W)},
\end{equation}
where $C_1 = \frac{1}{2}C_{\text{tr}}\sqrt{C_{\text{reg}}}(1+C_{\text{app}})$ with $C_{\text{reg}}$ from \eqref{eq:mesh-regularity}.

For $T_{2b}$, we similarly apply the Cauchy-Schwarz inequality:
\begin{equation*}
|T_{2b}| \leq \sum_{i=0}^{M-1} |\jump{\partial_x \widehat{\vec{u}}_h^{t}}_i| |\avg{\vec{\phi} - \mathbb{P}\vec{\phi}}_i|
\leq \left( \sum_{i=0}^{M-1} h_i |\jump{\partial_x \widehat{\vec{u}}_h^{t}}_i|^2 \right)^{1/2}
\left( \sum_{i=0}^{M-1} h_i^{-1} |\avg{\vec{\phi} - \mathbb{P}\vec{\phi}}_i|^2 \right)^{1/2}.
\end{equation*}
Here, the scaling is reversed compared to $T_{2a}$. Using the approximation property \eqref{eq:approx-prop} and trace inequality \eqref{eq:trace-ineq}, the second factor satisfies
\begin{equation*}
\left( \sum_{i=0}^{M-1} h_i^{-1} |\avg{\vec{\phi} - \mathbb{P}\vec{\phi}}_i|^2 \right)^{1/2} \leq C_{\text{tr}} C_{\text{app}} \lVert \vec{\phi} \rVert_{\sobh{1}_0(\W)},
\end{equation*}
yielding
\begin{equation}\label{eq:T2b_final}
  |T_{2b}| \leq C_2 \left( \sum_{i=0}^{M-1} h_i |\jump{\partial_x \widehat{\vec{u}}_h^{t}}_i|^2 \right)^{1/2} \lVert \vec{\phi} \rVert_{\sobh{1}_0(\W)},
\end{equation}
where $C_2 = \frac{1}{2}C_{\text{tr}}\sqrt{C_{\text{reg}}} C_{\text{app}}$ with $C_{\text{reg}}$ from \eqref{eq:mesh-regularity}.

For the penalty term $T_3$, we apply the Cauchy-Schwarz inequality after splitting the factor $h_i^{-1}$:
\begin{equation*}
|T_3| \leq \sum_{i=0}^{M-1} \frac{\sigma}{h_i} |\jump{\widehat{\vec{u}}_h^{t}}_i| |\jump{\mathbb{P}\vec{\phi}}_i|
= \sum_{i=0}^{M-1} \left(\frac{\sigma}{\sqrt{h_i}} |\jump{\widehat{\vec{u}}_h^{t}}_i|\right) \left(\frac{1}{\sqrt{h_i}} |\jump{\mathbb{P}\vec{\phi}}_i|\right).
\end{equation*}
Applying the Cauchy-Schwarz inequality to this sum yields
\begin{equation*}
|T_3| \leq \left( \sum_{i=0}^{M-1} \frac{\sigma^2}{h_i} |\jump{\widehat{\vec{u}}_h^{t}}_i|^2 \right)^{1/2}
\left( \sum_{i=0}^{M-1} \frac{1}{h_i} |\jump{\mathbb{P}\vec{\phi}}_i|^2 \right)^{1/2}.
\end{equation*}
The second factor is bounded using the trace inequality \eqref{eq:trace-ineq}. Since $|\jump{\mathbb{P}\vec{\phi}}_i|^2 = |(\mathbb{P}\vec{\phi})_i^+ - (\mathbb{P}\vec{\phi})_i^-|^2 \leq 2(|(\mathbb{P}\vec{\phi})_i^+|^2 + |(\mathbb{P}\vec{\phi})_i^-|^2)$, applying the trace inequality to each boundary value and proceeding as in the analysis of $T_{2a}$, we obtain
\begin{equation*}
\left( \sum_{i=0}^{M-1} \frac{1}{h_i} |\jump{\mathbb{P}\vec{\phi}}_i|^2 \right)^{1/2} \leq C_{\text{tr}} C_{\text{app}} \lVert \vec{\phi} \rVert_{\sobh{1}_0(\W)},
\end{equation*}
giving
\begin{equation}\label{eq:T3_final}
  |T_3| \leq C_3 \left( \sum_{i=0}^{M-1} \frac{\sigma^2}{h_i} |\jump{\widehat{\vec{u}}_h^{t}}_i|^2 \right)^{1/2} \lVert \vec{\phi} \rVert_{\sobh{1}_0(\W)},
\end{equation}
where $C_3 = C_{\text{tr}}\sqrt{C_{\text{reg}}}$ with $C_{\text{reg}}$ from \eqref{eq:mesh-regularity}.

Finally, combining the estimates \eqref{eq:T1_est}, \eqref{eq:T2a_final}, \eqref{eq:T2b_final}, and \eqref{eq:T3_final} establishes the bound in Theorem \ref{thm:h-1_est} with the computable indicators $\theta_1$, $\theta_2$, and $\theta_3$ as defined.

\bibliographystyle{alpha}
\bibliography{adv-diff,tristansbib}

\end{document}